\newcommand{\digonup}{\digon\mspace{-3.5mu}}
\newcommand{\hatdigonup}{\hatdigon\mspace{-3.5mu}}
\newcommand{\hatarcup}{\hatarc\mspace{-3.5mu}}
\newcommand{\brak}[1]{\langle #1\rangle}
\newcommand{\n}{\noindent}
\DeclareMathOperator{\stens}{\!\text{\raisebox{0.5pt}{${\scriptstyle\otimes }$}}\!}
\newcommand{\figins}[3] 
{\raisebox{#1pt}{\includegraphics[height=#2 in]{figs/#3.eps}}}
\newcommand{\figs}[2] 
{{\includegraphics[scale =#1]{figs/#2}}}
\newtheorem{thm}{Theorem}[section]
\newtheorem{lem}[thm]{Lemma}
\newtheorem{exe}[thm]{Example}
\newtheorem{conj}{Conjecture}
\theoremstyle{definition}
\newtheorem{defn}[thm]{Definition}
\newcommand{\xra}[1]{\xrightarrow{#1}}
\newcommand{\bC}{\mathbb{C}}
\newcommand{\cC}{\mathcal{C}}
\newcommand{\cD}{\mathcal{D}}
\DeclareMathOperator{\id}{id}
\newcommand{\ot}{\otimes}
\newcommand{\qbin}[2]{\left[{{#1}\atop {#2}}\right]}
\def\deg{\mathop{\rm deg}}
\long\def\@makecaption#1#2{%
    \vskip 10pt
    \setbox\@tempboxa\hbox{%
\small{#1: }\ignorespaces #2}%
    \ifdim \wd\@tempboxa >\captionwidth {%
        \rightskip=\@captionmargin\leftskip=\@captionmargin
        \unhbox\@tempboxa\par}%
      \else
        \hbox to\hsize{\hfil\box\@tempboxa\hfil}%
    \fi}
\newdimen\@captionmargin\@captionmargin=2\parindent
\newdimen\captionwidth\captionwidth=\hsize
\title{The 1,2-coloured HOMFLY-PT link homology}
\author{Marco Mackaay}
\address{Departamento de Matem\'{a}tica\\ Universidade do Algarve\\ 
Campus de Gambelas\\ 8005-139 Faro\\ Portugal and CAMGSD\\Instituto Superior T\'{e}cnico\\ Avenida Rovisco Pais\\ 
1049-001 Lisboa\\ Portugal}
\email{mmackaay@ualg.pt}
\author{Marko Sto\v si\'c }
\address{Instituto de Sistemas e Rob\'{o}tica and CAMGSD\\Instituto Superior T\'{e}cnico\\ Avenida Rovisco Pais\\ 
1049-001 Lisboa\\ Portugal}
\email{mstosic@math.ist.utl.pt}
\author{Pedro Vaz}
\address{Departamento de Matem\'{a}tica\\ Universidade do Algarve\\ 
Campus de Gambelas\\ 8005-139 Faro\\ Portugal and  
CAMGSD\\Instituto Superior T\'{e}cnico\\ Avenida Rovisco Pais\\ 
1049-001 Lisboa\\ Portugal}
\email{pfortevaz@ualg.pt}
\begin{document}
\newdimen\captionwidth\captionwidth=\hsize
%
%

\begin{abstract} In this paper we define the 1,2-coloured HOMFLY-PT link homology and prove 
that it is a link invariant. 
We conjecture that this homology categorifies the coloured HOMFLY-PT polynomial for links whose 
components 
are labelled 1 or 2.
\end{abstract}

\maketitle

\section{Introduction}
\label{sec:intro}
In this paper we define the coloured HOMFLY-PT link homology for links whose components are 
labelled 1 or 2. Since we did not find a complete recursive combinatorial 
calculus as in \cite{kim} and \cite{MOY} for the 
coloured HOMFLY-PT link polynomial just for the colours 1 and 2, we cannot prove that our link homology categorifies it. However, we show that all the 
relations which are necessary for proving invariance of the link polynomial are 
categorified in our setting. Therefore, we conjecture that our homology categorifies the 
1,2-coloured HOMFLY-PT link polynomial.  
For the definition of the 1,2-coloured HOMFLY-PT link homology we need to use a braid presentation of 
the link. We prove invariance of the homology under the second and third braidlike Reidemeister moves and 
the Markov moves. 

The 1,2-coloured HOMFLY-PT link homology is a triply graded link homology just as the ordinary 
HOMFLY-PT link homology due to Khovanov and Rozansky~\cite{KR2}. In 
\cite{GIKV} such a link homology was conjectured to exist from the physics 
point of view. We follow the approach using bimodules and 
Hochschild homology as was done for the ordinary HOMFLY-PT link homology by Khovanov in~\cite{Kh}. 
With Rasmussen's results for the ordinary HOMFLY-PT homology in mind, we conjecture that, in a certain 
sense, our link homology 
is the limit of the 1,2-coloured $sl(N)$ link homology, yet to be defined, when $N$ 
goes to infinity. If this is true, the colours 1 and 2 have a nice 
interpretation. They are the fundamental 
representation of $sl(N)$ and its second exterior power.

Although these $sl(N)$ link homologies have not been defined yet, there has been made some progress 
towards their definition in~\cite{wu-MOY,yonezawa}. In those papers the matrix factorization approach is followed. 
One should also be able to define the 1,2-coloured HOMFLY-PT link homology using matrix factorizations 
and in some sense this should be equivalent to our approach. 
For the 1,2-coloured HOMFLY-PT link homology one would only 
need one part of the matrix factorizations, which is somehow the easiest part. 
For technical reasons the bimodule approach is slightly easier, which is why we have not used 
matrix factorizations.     
 
In the last section of this paper we have sketched how to define the 
coloured HOMFLY-PT link homology for arbitrary colours and how to prove its invariance. The underlying 
ideas are the same, but the actual calculations are much harder. One needs a different technique 
to handle those calculations for arbitrary colours. Therefore we leave the general coloured HOMFLY-PT link 
homology for a next paper.  

To compute the 1,2-coloured HOMFLY-PT link homology is very hard. In 
\cite{GIKV} there is a conjecture for the Hopf link. 
One road to follow would be the one 
Rasmussen showed for the ordinary HOMFLY-PT link homology~\cite{rasmussen}. Once the $sl(N)$-link homologies 
have been defined, there should be spectral sequences from the coloured HOMFLY-PT link homologies to the 
$sl(N)$-link homologies. For ``small'' knots these spectral sequences should collapse for low values of 
$N$, which might make them computable. Another road to follow would be the one inniated by Webster and 
Williamson~\cite{webster}. For starters this would require a geometric interpretation of the bimodules and 
their Hochschild homology used in this paper. This approach might give some results for certain 
classes of knots, like the torus knots. 

Finally, let us briefly sketch an outline of our paper. In Section~\ref{sec:MOY} we recall some basic 
facts from~\cite{MOY} about the combinatorial calculus of the 1,2-coloured HOMFLY-PT polynomial and define one version of part of it
that we shall categorify. In 
Section~\ref{sec:triply} we categorify this part of the calculus by using 
bimodules. In 
Section~\ref{sec:HOMFLY-PT} we define the 1,2-coloured HOMFLY-PT link homology. As said before, we use 
a braid presentation of the link. In Section~\ref{sec:invariance} we prove invariance of 
the 1,2-coloured HOMFLY-PT link homology under the 
braidlike Reidemeister moves II and III. In Section~\ref{sec:Markov} we prove its invariance under the 
Markov moves. In 
Section~\ref{sec:conjectures} we sketch the definition of the coloured HOMFLY-PT link homology for 
arbitrary colours and 
conjecture its invariance under the second and third Reidemeister moves and the Markov moves.     

We assume familiarity with \cite{Kh, KR, KR2, kim, MOY}.

\section{The MOY calculus}
\label{sec:MOY}

In this section we recall part of the MOY calculus for the 1,2-coloured HOMFLY-PT link polynomial. As 
already remarked in the introduction this part of the calculus does probably not give a complete 
recursive calculus 
for the polynomial invariant. At least we do not know any proof of such a fact. We have simply picked 
those relations that are necessary for proving the invariance of the polynomial. As it turns out their  
categorifications prove the invariance of the related link homology. 

The calculus uses labelled trivalent graphs, which we call MOY webs, and is similar to the MOY calculus 
from \cite{MOY} and generalizes the calculus used by Khovanov and Rozansky in \cite{KR2} to 
define triply-graded link homology.
The resolutions of link diagrams consist of MOY webs, whose edges are labelled by positive integers, such 
that at each trivalent vertex the sum of the labels of the outgoing edges 
equals the sum of the labels of ingoing edges. Although the theory can be 
extended to allow for general labellings, in this paper, we shall 
only consider the ones where the labellings of the edges are from the set 
$\{1,2,3,4\}$. 

First we introduce the calculus for such graphs. This is an extension of the one 
with labellings being only 1 and 2 (see \cite{KR2}), and a variant of the one 
from \cite{MOY}. The value of a closed graph is given by requiring the following 
axioms to hold:
\begin{align}
\tag{A1} 
\unknot_k &= \prod_{i=1}^k \frac{1+t^{-1}q^{2i-1}}{1-q^{2i}} \label{ax1} 
\displaybreak[0]\\[1.2ex]
\tag{A2}
\raisebox{-18pt}{
\labellist
\tiny\hair 2pt
\pinlabel $i$   at -10  15
\pinlabel $i$   at -10 205
\pinlabel $j$   at 112 110
\pinlabel $i+j$ at -15 110
\endlabellist 
\figs{0.2}{digon2web}}\mspace{10mu} & = \prod_{l=1}^j \frac{1+t^{-1}q^{2i+2l-1}}{1-q^{2l}}
\raisebox{-18pt}{
\labellist
\tiny\hair 2pt
\pinlabel $i$ at -10 15
\endlabellist
\figs{0.2}{arcbent-u}} \label{ax2}
\displaybreak[0]\\[1.2ex]
\tag{A3}
\raisebox{-18pt}{
\labellist
\tiny\hair 2pt
\pinlabel $i+j$ at  80  14
\pinlabel $i+j$ at  80 202
\pinlabel $i$   at -15 110
\pinlabel $j$   at  84 110 
\endlabellist
\figs{0.191}{digonweb}}\mspace{18mu}
&= \left[ i+j \atop i \right]
\raisebox{-18pt}{
\labellist
\tiny\hair 2pt
\pinlabel $i+j$ at 50 15
\endlabellist
\figs{0.191}{arc-u}} \label{ax3}
\displaybreak[0]\\[1.2ex]
\tag{A4}
\raisebox{-18pt}{
\labellist
\tiny\hair 2pt
\pinlabel $i+j+k$ at 158  14
\pinlabel $i$     at  -9 202
\pinlabel $k$     at 190 202
\pinlabel $j$     at  86 202
\pinlabel $i+j$   at  14  86
\endlabellist
\figs{0.2}{vertexleft}}\mspace{18mu}
&=
\mspace{6mu}
\raisebox{-18pt}{
\labellist
\tiny\hair 2pt
\pinlabel $i+j+k$ at 162  14
\pinlabel $i$     at  -9 202
\pinlabel $k$     at 190 202
\pinlabel $j$     at  86 202
\pinlabel $i+j$   at 160  86
\endlabellist
\figs{0.2}{vertexright}} \label{ax4}
\displaybreak[0]\\[1.2ex]
\tag{A5}
\raisebox{-18pt}{
\labellist
\tiny\hair 2pt
\pinlabel $2$ at -10  14
\pinlabel $2$ at -10 205
\pinlabel $1$ at 138  14
\pinlabel $1$ at 138 205
\pinlabel $1$ at -10 105
\pinlabel $2$ at 141 105
\pinlabel $1$ at  65 180
\pinlabel $1$ at  65  30
\endlabellist
\figs{0.2}{sqweb}}\mspace{18mu}
&=
\mspace{6mu}
\raisebox{-18pt}{
\labellist
\tiny\hair 2pt
\pinlabel $2$ at -10  14
\pinlabel $2$ at -10 205
\pinlabel $1$ at 130  14
\pinlabel $1$ at 130 205
\pinlabel $3$ at  85 110 
\endlabellist
\figs{0.191}{dumbell}}\mspace{10mu}
+ q^2\
\raisebox{-18pt}{
\labellist
\tiny\hair 2pt
\pinlabel $2$ at  25 14
\pinlabel $1$ at 138 14
\endlabellist
\figs{0.192}{idweb}} \label{ax6}
\displaybreak[0]\\[1.2ex]
\tag{A6}
\raisebox{-18pt}{
\labellist
\tiny\hair 2pt
\pinlabel $3$ at -10  14
\pinlabel $3$ at -10 205
\pinlabel $1$ at 138  14
\pinlabel $1$ at 138 205
\pinlabel $2$ at -10 105
\pinlabel $2$ at 141 105
\pinlabel $1$ at  65 180
\pinlabel $1$ at  65  30
\endlabellist
\figs{0.2}{sqweb}}\mspace{18mu}
&=
\mspace{6mu}
\raisebox{-18pt}{
\labellist
\tiny\hair 2pt
\pinlabel $3$ at -10  14
\pinlabel $3$ at -10 205
\pinlabel $1$ at 130  14
\pinlabel $1$ at 130 205
\pinlabel $4$ at  85 110 
\endlabellist
\figs{0.191}{dumbell}}\mspace{10mu}
+ (q^2+q^4)\
\raisebox{-18pt}{
\labellist
\tiny\hair 2pt
\pinlabel $3$ at  25 14
\pinlabel $1$ at 138 14
\endlabellist
\figs{0.192}{idweb}}\label{ax5}
\displaybreak[0]\\[1.2ex]
\tag{A7}
\raisebox{-18pt}{
\labellist
\tiny\hair 2pt
\pinlabel $2$ at -10  14
\pinlabel $1$ at -10 205
\pinlabel $2$ at 138  14
\pinlabel $3$ at 138 205
\pinlabel $3$ at -10 105
\pinlabel $1$ at 141 105
\pinlabel $2$ at  65 180
\pinlabel $1$ at  65  30
\endlabellist
\figs{0.2}{sqweb-rot}}\mspace{18mu}
&=
\mspace{6mu}
\raisebox{-18pt}{
\labellist
\tiny\hair 2pt
\pinlabel $2$ at -10  14
\pinlabel $1$ at -10 205
\pinlabel $2$ at 130  14
\pinlabel $3$ at 130 205
\pinlabel $4$ at  85 110 
\endlabellist
\figs{0.191}{dumbell}}\mspace{10mu}
+ q^2\
\raisebox{-18pt}{
\labellist
\tiny\hair 2pt
\pinlabel $2$ at -12  14
\pinlabel $1$ at -12 205
\pinlabel $2$ at 135  14
\pinlabel $3$ at 135 205
\pinlabel $1$ at  57 127
\endlabellist
\figs{0.192}{Hweb}} \label{ax7}
\end{align}
In this paper we use the following (non-standard) convention for the quantum integers:
$$[n]=1+q^2+\ldots+q^{2(n-1)}=\frac{1-q^{2n}}{1-q^2}.$$
We define the quantum factorial and the binomial coefficients in the 
standard way:
$$[n]!=[1][2]\cdots[n],\quad \left[ n \atop m \right ]=\frac{[n]!}{[m]![n-m]!}.$$  
Although we have defined the axioms A1-A3, for arbitrary $i$, $j$ and $k$, in this paper we shall only need them in the cases 
where the indices $i$, $j$ and $k$ are from the set $\{1,2\}$.

The invariant is defined for link projections which have the form of the closure 
of a braid. All positive and negative crossings between strands labelled 2 are resolved in three different resolutions, 
and the value of the bracket is defined using the following relations:
\begin{gather}
\begin{split}
\raisebox{0pt}{
\labellist
\pinlabel $=$ at 245 112
\pinlabel $q^6$ at 340 115
\pinlabel $-\ q^{2}$ at 660 115
\pinlabel $+$ at 1030 115
\tiny\hair 2pt
\pinlabel $2$ at -12  19
\pinlabel $2$ at 175  19
\pinlabel $2$ at 382  19
\pinlabel $2$ at 570  19
\pinlabel $2$ at 757  19
\pinlabel $2$ at 758 206
\pinlabel $2$ at 947  19
\pinlabel $2$ at 945 206
\pinlabel $3$ at 780 110
\pinlabel $1$ at 927 110
\pinlabel $1$ at 852  38
\pinlabel $1$ at 852 177
\pinlabel $2$ at 1093  16
\pinlabel $2$ at 1095 206
\pinlabel $2$ at 1275  16
\pinlabel $2$ at 1272 206
\pinlabel $4$ at 1204 110
\endlabellist
\centering
\figs{0.23}{posXing-ssum}}
\\[1.5ex]
\raisebox{0pt}{
\labellist
\pinlabel $=$ at 245 112
\pinlabel $q^{-8}$ at 335 115
\pinlabel $-\ q^{-8}$ at 655 115
\pinlabel $+\ q^{-6}$ at 1010 115
\tiny\hair 2pt
\pinlabel $2$ at -12 17
\pinlabel $2$ at 175 17
\pinlabel $2$ at 386 17
\pinlabel $2$ at 569 17
\pinlabel $2$ at 385 204
\pinlabel $2$ at 568 204
\pinlabel $4$ at 496 110
\pinlabel $2$ at 731 17
\pinlabel $2$ at 735 204
\pinlabel $2$ at 923 17
\pinlabel $2$ at 920 204
\pinlabel $3$ at 755 107
\pinlabel $1$ at 903 107
\pinlabel $1$ at 829  40
\pinlabel $1$ at 829 176
\pinlabel $2$ at 1092 17
\pinlabel $2$ at 1277 17
\endlabellist
\centering
\figs{0.23}{negXing-ssum}}
\medskip
\end{split}\label{eq:Xing22}
\end{gather}
The resolutions of the positive and negative crossings between strands labelled 1 and 2 are given by:
\begin{gather}
\begin{split}
\raisebox{0pt}{
\labellist
\pinlabel $=$ at 225 112
\pinlabel $-q^{2}$ at 315 115
\pinlabel $+$ at 660 115
\tiny\hair 2pt
\pinlabel $2$ at -12  16
\pinlabel $1$ at 175  16
\pinlabel $2$ at 376  16
\pinlabel $1$ at 560  16
\pinlabel $1$ at 378 206
\pinlabel $2$ at 561 206
\pinlabel $1$ at 469 125
\pinlabel $2$ at 752  16
\pinlabel $1$ at 754 206
\pinlabel $1$ at 935  16
\pinlabel $2$ at 935 206
\pinlabel $3$ at 863 110
\endlabellist
\centering
\figs{0.23}{posXing12-ssum}}
\\[1.5ex]
\raisebox{0pt}{
\labellist
\pinlabel $=$ at 225 112
\pinlabel $q^{-4}$ at 315 115
\pinlabel $-\ q^{-4}$ at 660 115
\tiny\hair 2pt
\pinlabel $2$ at -12  16
\pinlabel $1$ at 175  16
\pinlabel $2$ at 375  16
\pinlabel $1$ at 560  16
\pinlabel $1$ at 377 206
\pinlabel $2$ at 560 206
\pinlabel $3$ at 490 110
\pinlabel $2$ at 752  16
\pinlabel $1$ at 754 206
\pinlabel $1$ at 935  16
\pinlabel $2$ at 935 206
\pinlabel $1$ at 840 125
\endlabellist
\centering
\figs{0.23}{negXing12-ssum}}
\medskip
\end{split}\label{eq:Xing12}
\end{gather}
The resolutions when the labels 1 and 2 are swapped one obtains by rotation around 
the y-axis. 

Finally the case of the crossings when both strands are labelled 1 is the 
same as in \cite{KR2}:
\begin{gather}
\begin{split}
\raisebox{0pt}{
\labellist
\pinlabel $=$ at 225 112
\pinlabel $-q^{2}$ at 315 115
\pinlabel $+$ at 660 115
\tiny\hair 2pt
\pinlabel $1$ at -12  16
\pinlabel $1$ at 175  16
\pinlabel $1$ at 376  16
\pinlabel $1$ at 560  16
\pinlabel $1$ at 752  16
\pinlabel $1$ at 754 206
\pinlabel $1$ at 935  16
\pinlabel $1$ at 935 206
\pinlabel $2$ at 863 110
\endlabellist
\centering
\figs{0.23}{posXing11-ssum}}
\\[1.5ex]
\raisebox{0pt}{
\labellist
\pinlabel $=$ at 225 112
\pinlabel $q^{-2}$ at 315 115
\pinlabel $-\ q^{-2}$ at 660 115
\tiny\hair 2pt
\pinlabel $1$ at -12  16
\pinlabel $1$ at 175  16
\pinlabel $1$ at 375  16
\pinlabel $1$ at 560  16
\pinlabel $1$ at 377 206
\pinlabel $1$ at 560 206
\pinlabel $2$ at 490 110
\pinlabel $1$ at 752  16
\pinlabel $1$ at 935  16
\endlabellist
\centering
\figs{0.23}{negXing11-ssum}}
\medskip
\end{split}\label{eq:Xing11}
\end{gather}
Given the evaluation of trivalent graphs satisfying axioms A1-A7, we can define 
a polynomial $\brak{D}$ for each link diagram $D$ with components labelled by 1 and 2, 
using the resolutions in \eqref{eq:Xing22}, \eqref{eq:Xing12} and \eqref{eq:Xing11}. Analogously as in~\cite{MOY}, it 
can be shown that it is invariant under the second and the third Reidemeister move and has the 
following simple behaviour under the first Reidemeister move:
\begin{equation*}
\labellist
\tiny\hair 2pt
\pinlabel $2$ at  20 14
\pinlabel $2$ at 260 14
\endlabellist
\raisebox{-18pt}{
\figs{0.2}{R1plus}}
\
= 
\
\raisebox{-18pt}{
\figs{0.2}{arcbent-ur}}
\mspace{20mu},\mspace{60mu}
\labellist
\tiny\hair 2pt
\pinlabel $2$ at  25 14
\pinlabel $2$ at 420 14
\endlabellist
\raisebox{-18pt}{
\figs{0.2}{R1minus}}
\
= 
\ t^{-2}q^{-2}
\raisebox{-18pt}{
\figs{0.2}{arcbent-ur}}
\end{equation*}
when the strand is labelled 2 and 
\begin{equation*}
\labellist
\tiny\hair 2pt
\pinlabel $1$ at  20 14
\pinlabel $1$ at 260 14
\endlabellist
\raisebox{-18pt}{
\figs{0.2}{R1plus}}
\
= 
\
\raisebox{-18pt}{
\figs{0.2}{arcbent-ur}}
\mspace{20mu},\mspace{60mu}
\labellist
\tiny\hair 2pt
\pinlabel $1$ at  25 14
\pinlabel $1$ at 420 14
\endlabellist
\raisebox{-18pt}{
\figs{0.2}{R1minus}}
\
= 
\ -t^{-1}q^{-1}
\raisebox{-18pt}{
\figs{0.2}{arcbent-ur}}
\end{equation*}
when the strand is labelled 1. 

To obtain a genuine knot invariant, we therefore have to multiply the bracket 
by the following overall factor:

\begin{equation*}
I(D)=(-tq)^{\frac{-n^{1}_++ n^{1}_-+s_1(D) -2n^{2}_+ +2n^{2}_-+2s_2(D)}{2}} \langle D\rangle,
\end{equation*}
where $n^{i}_+$ and $n^{i}_-$ denote the number of positive and  
negative crossings, respectively, between two strands labelled $i$ and $s_i(D)$ denotes the number of 
strands labelled $i$, for $i=1,2$.

\section{The categorification of the MOY calculus}
\label{sec:triply}
In this section we show which bimodule to associate to a web and show that 
these bimodules satisfy axioms A3-A7 up to isomorphism. The proof that A1 
and A2 are also satisfied will be given in Section~\ref{sec:Markov} after we have explained 
the Hochschild homology. We only explain the general idea and work out the bits which involve edges with 
higher labels and which have not been explained by Khovanov in~\cite{Kh}.

Let $R=\bC[x_1,\ldots,x_n]$ be the ring of complex polynomials in $n$ 
variables. We introduce a grading on $R$, by defining $\deg x_i=2$, for every $i=1,\ldots,n$. This grading is called the $q$-{\em grading}. For any partition 
$i_1,\ldots,i_k$ of $n$, let $R_{i_1\cdots i_k}$ denote the subring of $R$ of complex polynomials 
which are invariant under the product of the symmetric groups 
$S_{i_1}\times\cdots\times S_{i_k}$.   
For starters we associate a bimodule to each MOY-web  
(see Section~\ref{sec:MOY}). Suppose 
$\Gamma$ is a MOY web with $k$ bottom ends labelled by $i_1,\ldots,i_k$ 
and $m$ top ends labelled by $j_1,\ldots,j_m$. Recall that 
$i_1+\cdots+i_k=j_1+\cdots +j_m$ holds and let $R$ have exactly that number of 
variables. We associate an 
$R_{j_1\cdots j_m}-R_{i_1\cdots i_k}$-bimodule to $\Gamma$. We read 
$\Gamma$ from bottom to top. To the bottom edges we associate the bimodule 
$R_{i_1\cdots i_k}$. When we move up in the web we encounter a $\YGraph$-shaped 
or a $\UYGraph$-shaped bifurcation. 
The $\YGraph$-shape will always correspond to induction and the $\UYGraph$ to 
restriction, e.g. if we first encounter a $\YGraph$-shaped bifurcation which 
splits $i_1$ 
into $i_1^0$ and $i_1^1$, then we tensor $R_{i_1\cdots i_k}$ on the left 
with $R_{i_1^0 i_1^1 \cdots i_k}$ over $R_{i_1\cdots i_k}$. We will write 
this tensor product as 
$$R_{i_1^0 i_1^1 \cdots i_k}\otimes_{i_1\cdots i_k} R_{i_1\cdots i_k}.$$
If we first 
encounter a $\UYGraph$-shaped bifurcation with joints $i_1$ and $i_2$, then 
we restrict the left action on $R_{i_1\cdots i_k}$ to 
$R_{i_1+i_2\cdots i_k}$. Note that the latter action goes unnoticed until 
tensoring again at some next $\YGraph$-shaped bifurcation, in which case we tensor 
over the smaller ring, or until taking the Hochschild homology in a later stage. As we go up in the web use either 
induction or restriction at each bifurcation. This way you obtain an 
$R_{j_1\cdots j_m}-R_{i_1\cdots i_k}$-bimodule associated to $\Gamma$, which 
we denote by $\widehat{\Gamma}$. The identity web in Figure~\ref{fig:idweb-i1ik} 
\begin{figure}[ht!]
\labellist
\tiny\hair 2pt
\pinlabel $i_1$ at  -8 15
\pinlabel $i_2$ at  78 15
\pinlabel $i_3$ at 235 15
\pinlabel $\dotsm$ at 170 105
\endlabellist
\centering
\figs{0.25}{kidweb}
\caption{The identity web $\arcsup_{i_1\cdots i_k}$}
\label{fig:idweb-i1ik}
\end{figure}
whose edges are labelled by $i_1,\ldots,i_k$ will always be denoted by $\arcsup_{i_1\cdots i_k}$ 
and $\hatarcsup_{i_1\cdots i_k}=R_{i_1\cdots i_k}$.
The dumbell web in Figure~\ref{fig:dumbell-i1i2} 
\begin{figure}[ht!]
\labellist
\tiny\hair 2pt
\pinlabel $i_1$ at -10 10
\pinlabel $i_1$ at -10 205
\pinlabel $i_2$ at 133 10
\pinlabel $i_2$ at 135 205
\pinlabel $i_1+i_2$ at 110 108
\endlabellist
\centering
\figs{0.25}{dumbell}
\caption{The dumbell web $\dumbell_{i_1i_2}$}
\label{fig:dumbell-i1i2}
\end{figure}
whose outer 
edges are labelled $i_1$ and $i_2$ will always be denoted by $\dumbell_{i_1i_2}$ and 
$\hatdumbell_{i_1i_2}=R_{i_1i_2}\ot_{i_1+i_2}R_{i_1i_2}$.  Since the bimodules that we use are graded, we can apply 
a grading shift. In the text and when using small symbols we denote a positive shift of $k$ values 
applied to a bimodule $M$ by $M\{k\}$. When we use MOY-type pictures we denote the same shift by $q^k$. 
Note that we also do not put a hat on top of these pictures to avoid too much notation and 
unnecessarily large figures. We hope that this does not leed to confusion.

Note that for our construction we need to choose a height function on the 
web first. However, it is easy to see that the bimodule does not depend on 
the choice of this height function. 

Now that we know which bimodule to associate to a MOY web, we will 
show some direct sum decompositions for bimodules associated to certain 
MOY webs. These decompositions are necessary to show that our bimodules 
categorify the MOY calculus indeed and also to show that the link homology is 
invariant, up to isomorphism, under the Reidemeister moves.

\begin{lem} 
\label{lem:digon}
Let $\digonup_{ij}$ be the digon in A3. Then we have 
$$\hatdigonup_{ij}\cong 
\qbin{i+j}{i}\hatarcup_{i+j}$$
Note that the quantum binomial can be written as a sum of powers of $q$. 
By $$\qbin{i+j}{i}\hatarcup_{i+j}$$ we mean the corresponding direct sum of copies of $\hatarcup_{i+j}$, 
where each copy is shifted by the correct power of $q$. 
\end{lem}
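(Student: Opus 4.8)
The plan is to unwind the bimodule recipe from the start of this section and to reduce the statement to a classical fact from invariant theory. Reading the digon $\digonup_{ij}$ of A3 from bottom to top, one first meets a $\YGraph$ splitting the edge $i+j$ into $i$ and $j$, which by the rule is induction, i.e. $R_{ij}\otimes_{R_{i+j}}R_{i+j}\cong R_{ij}$ as an $(R_{ij},R_{i+j})$-bimodule; then one meets a $\UYGraph$ merging $i$ and $j$ back into $i+j$, which is restriction of the left action from $R_{ij}$ to $R_{i+j}$. Hence $\hatdigonup_{ij}$ is just $R_{ij}$, viewed as an $(R_{i+j},R_{i+j})$-bimodule with both actions induced by the inclusion $R_{i+j}\hookrightarrow R_{ij}$, whereas $\hatarcup_{i+j}=R_{i+j}$ is the diagonal bimodule. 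So the lemma reduces to showing that $R_{ij}$ is a free graded $R_{i+j}$-module admitting a homogeneous basis whose degrees, recorded as powers of $q$, sum to $\qbin{i+j}{i}$.

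I would establish freeness as follows. By Chevalley--Shephard--Todd (or directly), $R_{i+j}=\bC[x_1,\ldots,x_{i+j}]^{S_{i+j}}$ is the polynomial ring on the elementary symmetric functions $e_1,\ldots,e_{i+j}$, and $R_{ij}=\bC[x_1,\ldots,x_{i+j}]^{S_i\times S_j}$ is the polynomial ring on the elementary symmetric functions in $x_1,\ldots,x_i$ together with those in $x_{i+1},\ldots,x_{i+j}$; in particular the extension $R_{i+j}\subseteq R_{ij}$ is finite, so $R_{ij}$ is a finitely generated graded $R_{i+j}$-module. Being a polynomial ring, $R_{ij}$ is Cohen--Macaulay of dimension $i+j=\dim R_{i+j}$, i.e. a maximal Cohen--Macaulay $R_{i+j}$-module, and a maximal Cohen--Macaulay finitely generated graded module over a polynomial ring is free (Auslander--Buchsbaum). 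Fix a homogeneous $R_{i+j}$-basis $b_1,\ldots,b_N$ of $R_{ij}$; one may instead use the explicit basis indexed by the minimal-length coset representatives of $S_i\times S_j$ in $S_{i+j}$, equivalently by the partitions fitting inside an $i\times j$ box.

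It remains to count. The graded rank $\sum_s q^{\deg b_s}$ equals the quotient of the Hilbert series of $R_{ij}$ by that of $R_{i+j}$, which with the convention $\deg x_k=2$ is
$$\frac{\prod_{k=1}^{i}(1-q^{2k})^{-1}\,\prod_{k=1}^{j}(1-q^{2k})^{-1}}{\prod_{k=1}^{i+j}(1-q^{2k})^{-1}}=\frac{[i+j]!}{[i]!\,[j]!}=\qbin{i+j}{i}$$
in the paper's conventions for $[n]$ and $\qbin{\cdot}{\cdot}$. Finally, each $R_{i+j}b_s\subseteq R_{ij}$ is automatically a sub-bimodule, because both the left and the right $R_{i+j}$-action are multiplication inside the commutative ring $R_{ij}$ (so $f\cdot(rb_s)\cdot g=(frg)b_s$), and $r\mapsto rb_s$ is an isomorphism of graded bimodules $R_{i+j}\{\deg b_s\}\xrightarrow{\ \sim\ }R_{i+j}b_s$, injectivity coming from freeness. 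Summing over $s$ yields $\hatdigonup_{ij}=R_{ij}=\bigoplus_s R_{i+j}b_s\cong\qbin{i+j}{i}\,\hatarcup_{i+j}$. I expect the only real content to be the freeness statement together with the Hilbert-series computation; the bimodule bookkeeping is automatic precisely because the two $R_{i+j}$-actions coincide, so the one thing to be careful about is that the decomposition is as bimodules and carries the correct grading shifts.
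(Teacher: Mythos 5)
Your proposal is correct, and it reaches the same reduction as the paper — namely that $\hatdigonup_{ij}$ is just $R_{ij}$ viewed as an $R_{i+j}$-bimodule with both actions given by the inclusion $R_{i+j}\hookrightarrow R_{ij}$, so the lemma becomes the statement that $R_{ij}$ is graded-free of rank $\qbin{i+j}{i}$ over $R_{i+j}$ — but you establish that last fact by a different route. The paper identifies $R_{ij}$ over $R_{i+j}$ with the $U(i+j)$-equivariant cohomology $H^*_{U(i+j)}(G(i,i+j))$ of the Grassmannian, reads off the Schur polynomials $\pi_{k_1\cdots k_i}$ with $0\le k_i\le\cdots\le k_1\le j$ (partitions in an $i\times j$ box) as an explicit $R_{i+j}$-basis, and counts them to get the quantum binomial. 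You instead prove freeness abstractly: both $R_{ij}$ and $R_{i+j}$ are polynomial invariant rings (Chevalley--Shephard--Todd), so the module extension is finite, $R_{ij}$ is maximal Cohen--Macaulay over $R_{i+j}$, hence free by Auslander--Buchsbaum, and then you extract the graded rank as a quotient of Hilbert series. The two approaches buy different things: the paper's gives an explicit basis (which it in fact uses later, in Definition~\ref{defn:22-mucomu} and elsewhere, to write down $\Delta_{ij}$ in Schur polynomials), while yours is more self-contained commutative algebra needing no input from equivariant cohomology. You also make explicit the observation, left implicit in the paper's ``(bi)module'' parenthetical, that the left and right $R_{i+j}$-actions on $R_{ij}$ coincide, so a free-module decomposition is automatically a bimodule decomposition; that is a small but genuine clarification. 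Both arguments are sound; yours is a legitimate alternative that is slightly longer but requires less cited background.
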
   
\begin{proof}
Note that $R_{ij}$ as an $R_{i+j}$-(bi)module is isomorphic to 
$H^*_{U(i+j)}(G(i,i+j))$, the $U(i+j)$ equivariant cohomology of the 
complex Grassmannian $G(i,i+j)$. Therefore the Schur polynomials 
$\pi_{k_1\cdots k_i}$ in the first $i$ variables, for $0\leq k_i\leq \cdots 
\leq k_1\leq j$ form a basis of $R_{ij}$ as an $R_{i+j}$-(bi)module 
(see~\cite{F} for example). Alternatively we can use $G(j,i+j)$ and 
obtain that the Schur polynomials $\pi'_{\ell_1\cdots\ell_j}$ in the last 
$j$ variables form a basis of $R_{ij}$ as an $R_{i+j}$-(bi)module, 
for $0\leq \ell_j\leq\cdots\leq\ell_1\leq i$. This shows that 
$$R_{ij}\cong \qbin{i+j}{i} R_{i+j}$$
holds. The proof of this lemma follows, 
since $\hatdigonup_{ij}=R_{ij}\otimes_{i+j}R_{i+j}$ and 
$\hatarcup_{i+j}=R_{i+j}$.   
\end{proof}

Before we continue with square decompositions we define the following 
bimodule maps:

\begin{defn} 
\label{defn:1k-mucomu}
We define the $R_{1k}$-bimodule maps 
$$\mu_{1k}\colon \hatdumbell_{1k}\to \hattwoarcs_{1k}\quad\text{and}\quad \Delta_{1k}
\colon \hattwoarcs_{1k}\to 
\hatdumbell_{1k}$$
by 
$$\mu_{1k}(a\stens b)=ab\quad\mbox{and}\quad\Delta_{1k}(1)=\sum_{j=0}^{k}(-1)^j e_j(\sum_{i=0}^{k-j} x_1^{k-j-i}
\stens x_1^i).$$
\noindent The elements 
$e_i$ are the elementary symmetric polynomials in $k+1$ variables. 
\end{defn}
\noindent The formula for $\Delta_{1k}$ can also be written as 
$$ \Delta_{1k}(1)=\sum_{j=0}^{k}(-1)^j x_1^{k-j} \stens e'_j,$$
where $e'_j$ is the $j$-th elementary symmetric polynomial in the last $k$ 
variables $x_2,\ldots,x_{k+1}$.

It is not hard to see that $\mu$ and $\Delta$ are $R_{1k}$ bimodule maps indeed. 
One can check this by direct computation or, as above, note that 
$R_{1k}$ is isomorphic to $H^*_{U(k+1)}(G(1,k))$ as an $R_{k+1}$-module. 
The maps above are well known (it is an immediate consequence of exercise 1.1 of lecture 4 in \cite{F}, 
for example) to be the multiplication and comultiplication 
in this commutative Frobenius extension with respect to the trace defined by 
$\mbox{tr}(x_1^k)=1$. The 
observation now follows from the fact that the 
multiplication and comultiplication in a commutative Frobenius extension $A$ are 
always $A$-bimodule maps. 

When it is not immediately clear to which variables one applies a multiplication or comultiplication 
we will indicate them in a superscript. When there is no confusion possible we 
will write $ab$ for $\mu_{1,k}(a\stens b)$. 
 
\begin{defn}
\label{defn:22-mucomu}
We define the $R_{22}$-bimodule maps 
$$\mu_{22}\colon \hatdumbell_{22}\to \hattwoarcs_{22}\quad\text{and}\quad \Delta_{22}\colon 
\hattwoarcs_{22}\to 
\hatdumbell_{22}$$
by 
$$\mu_{22}(a\stens b)=ab$$
and
$$\Delta_{22}(1)=\pi_{22}\stens 1-\pi_{21}\cdot\pi'_{10}+\pi_{20}\stens 
\pi'_{11}+\pi_{11}\cdot\pi'_{20}-\pi_{10}\cdot\pi'_{21}+1\cdot\pi'_{22}+\leftrightarrow.$$
The $\pi_{ij}$ and $\pi'_{ij}$ are the Schur polynomials in $x_1,x_2$ and $x_3,x_4$ respectively. The 
$\leftrightarrow$ indicates the terms which are obtained from all the previous terms by interchanging 
the $\pi_{ij}$ and the $\pi'_{ij}$ so that $\Delta$ becomes cocommutative.
\end{defn}

\noindent One easily checks that $\mu_{22}$ and $\Delta_{22}$ are $R_{22}$-bimodule maps by direct 
computation. The map $\Delta_{22}$ 
is the comultiplication in $H^*_{U(4)}(G(2,4))$ with respect 
to the trace defined by $\mbox{tr}(\pi_{22})=1$.
Sometimes it will be useful to rewrite the formula of $\Delta_{22}$ entirely in terms of the $\pi'_{ij}$ 
and elements of $R_4$. To do so use the following relations:
\begin{align*}
\pi_{10} &= \pi_{1000}-\pi'_{10}\\
\pi_{11} &= \pi_{1100}-\pi'_{10}\pi_{1000}+\pi'_{20}\\
\pi_{20} &= \pi_{2000}-\pi'_{10}e_1+\pi'_{11}\\
\pi_{21} &= \pi_{2100}-\pi'_{10}(\pi_{2000}+\pi_{1100})+\pi'_{20}\pi_{1000}
+\pi'_{11}\pi_{1000}-\pi'_{21}\\
\pi_{22} &= \pi_{2200}-\pi'_{10}\pi_{2100}+\pi'_{11}\pi_{2000}+\pi'_{20}\pi_{1100}-\pi'_{21}\pi_{1000}
+\pi'_{22}
\end{align*}

We can now prove some square decompositions. 

\begin{lem}
\label{lem:1112-square}
Let $\squareup_{1112}$ be the square in A6. 
The following decomposition holds
$$\hatsquareup_{1112}\cong \hatdumbell_{21}\oplus \hattwoarcs_{21}\{2\}.$$
\end{lem}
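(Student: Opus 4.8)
The plan is to make $\hatsquareup_{1112}$ explicit, peel off the summand $\hattwoarcs_{21}\{2\}$ by an elementary splitting, and then identify the remaining summand with $\hatdumbell_{21}$ by a graded-rank count. First I would unwind the construction of Section~\ref{sec:triply}. Set $R_{111}=\mathbb C[x_1,x_2,x_3]$, $R_{12}=R_{111}^{S_{\{2,3\}}}$, $R_{21}=R_{111}^{S_{\{1,2\}}}$, $R_3=R_{111}^{S_3}$. For the square $\squareup_{1112}$ (outer edges labelled $2$ on the left and $1$ on the right, both rungs labelled $1$, middle vertical edges labelled $1$ and $2$), any height function forces the $\YGraph$ splitting the ``$2$''-edge to lie below the $\UYGraph$ reabsorbing the ``$1$'' it produces, and the recipe ``induction at a $\YGraph$, restriction at a $\UYGraph$'' then gives, after cancelling trivial tensor factors,
\[
\hatsquareup_{1112}\;\cong\;R_{111}\otimes_{R_{12}}R_{111}
\]
as an $R_{21}$-bimodule, both $R_{21}$-actions being the restrictions along $R_{21}\hookrightarrow R_{111}$ of left, respectively right, multiplication on the outer factors. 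Recall also $\hatdumbell_{21}=R_{21}\otimes_{R_3}R_{21}$ and $\hattwoarcs_{21}=R_{21}$.

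Next I would split off $\hattwoarcs_{21}\{2\}$. Let $\partial\colon R_{111}\to R_{21}$ be the divided-difference operator $\partial(f)=(f-s_{12}f)/(x_1-x_2)$, an $R_{21}$-bimodule map of $q$-degree $-2$, and let $m\colon R_{111}\otimes_{R_{12}}R_{111}\to R_{111}$ be multiplication. Put $\pi:=\partial\circ m\colon\hatsquareup_{1112}\to\hattwoarcs_{21}\{2\}$ and define $\iota\colon\hattwoarcs_{21}\{2\}\to\hatsquareup_{1112}$ by $\iota(1)=1\otimes x_2-x_3\otimes 1$. Both are $R_{21}$-bimodule maps: for $\iota$ this is the assertion that $1\otimes x_2-x_3\otimes 1$ is central for the two $R_{21}$-actions, which is a one-line check against the generators $x_1+x_2,\,x_1x_2,\,x_3$ of $R_{21}$ (for instance $x_3(1\otimes x_2-x_3\otimes 1)=(1\otimes x_2-x_3\otimes 1)x_3=x_3\otimes x_2-x_3^2\otimes 1$, using $x_2+x_3,\,x_2x_3\in R_{12}$). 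Since $\pi\iota(1)=\partial(x_2-x_3)=-1$, the map $-\iota$ is a section of $\pi$, whence $\hatsquareup_{1112}\cong\hattwoarcs_{21}\{2\}\oplus\ker\pi$.

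Finally I would identify $\ker\pi$ with $\hatdumbell_{21}$. As $R_3\subseteq R_{12}$ and $R_{21}\subseteq R_{111}$, there is an evident $R_{21}$-bimodule map $\lambda\colon\hatdumbell_{21}=R_{21}\otimes_{R_3}R_{21}\to\hatsquareup_{1112}$, $a\otimes b\mapsto a\otimes b$; for $a,b\in R_{21}$ the product $ab$ is $s_{12}$-invariant, so $\pi\lambda=0$ and $\lambda$ lands in $\ker\pi$. Using Lemma~\ref{lem:digon} (applied to the extensions $R_{111}/R_{12}$ and $R_{21}/R_3$) one computes the graded ranks and obtains $\dim_q\hatsquareup_{1112}=\dim_q\hatdumbell_{21}+q^2\dim_q\hattwoarcs_{21}$ (the decategorification of the Lemma), so $\dim_q\ker\pi=\dim_q\hatdumbell_{21}$; hence once $\lambda$ is known to be injective it is automatically an isomorphism onto $\ker\pi$, and the decomposition follows. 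The hard part is precisely this injectivity: $\hatsquareup_{1112}$ is \emph{not} a free $R_{21}$-bimodule, so matching graded ranks alone does not pin down the complementary summand, and one must genuinely produce the embedding $\hatdumbell_{21}\hookrightarrow\hatsquareup_{1112}$. The cleanest route I see is to base change along $R_3\hookrightarrow\mathrm{Frac}(R_3)$, where $\mathrm{Frac}(R_{12})$ and $\mathrm{Frac}(R_{21})$ become degree-$3$ separable subextensions of the $S_3$-Galois extension $\mathrm{Frac}(R_{111})$, the right-hand factor becomes $\mathrm{Frac}(R_{111})\times\mathrm{Frac}(R_{111})$, and injectivity of $\lambda$ boils down to the elementary fact that $x_1\notin\mathrm{Frac}(R_{21})$; since $\hatdumbell_{21}$ is free over $R_3$ this is enough. (Alternatively one can build an explicit retraction of $\lambda$ out of divided-difference operators on both tensor factors, at the cost of heavier bookkeeping.)
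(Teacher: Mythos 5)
Your proposal is correct, and your three maps coincide with the paper's: $\pi=\partial\circ m$ is exactly the paper's $h=h_2h_1$ (the ``projection $R_{111}\to R_{21}\{2\}$ onto the second summand of $R_{111}\cong R_{21}\oplus x_1R_{21}$'' is precisely the divided-difference $\partial$), $\iota$ is the paper's $j$ (since $\Delta_{11}^{x_2,x_3}(1)=1\otimes x_2-x_3\otimes 1$), and $\lambda$ is the paper's $g$. Where you genuinely diverge is how the argument is closed. The paper also builds an explicit retraction $f$ of $g$ out of $\Delta_{12}$ and the intermediate webs $\Gamma_1\cong\Gamma_2$, checks $fg=\mathrm{id}$, $hg=0$, $fj=0$, and shows $(g,j)$ is surjective, obtaining orthogonal idempotents directly. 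You avoid constructing $f$: you split off $\hattwoarcs_{21}\{2\}$ with $\pi\iota=-\mathrm{id}$, and then identify $\ker\pi$ with $\hatdumbell_{21}$ by a graded-rank count together with injectivity of $\lambda$ after base change to $\mathrm{Frac}(R_3)$. The injectivity argument does work --- $\mathrm{Frac}(R_{21})\otimes_{\mathrm{Frac}(R_3)}\mathrm{Frac}(R_{21})\cong\mathrm{Frac}(R_{21})\times\mathrm{Frac}(R_{111})$ because $(t-x_1)(t-x_2)$ is irreducible over $\mathrm{Frac}(R_{21})$, and the two idempotents land on the two idempotents of $\mathrm{Frac}(R_{111})\otimes_{\mathrm{Frac}(R_{12})}\mathrm{Frac}(R_{111})\cong\mathrm{Frac}(R_{111})^{\times 2}$, whence injectivity, which descends to $\lambda$ by freeness of $\hatdumbell_{21}$ over $R_3$ --- but as written this is the most compressed step in your sketch and would need to be spelled out. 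The trade-off: your route dodges the fiddliest map ($f$), while the paper's gives all the idempotents explicitly, which is what it actually needs later (for instance when matching up components of differentials in the proof of Lemma~\ref{lem:slide22}). Minor remark: your $\pi\iota=-\mathrm{id}$ looks right; the paper's assertion ``$hj=-2\,\mathrm{id}$'' appears to be a slip.
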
    
\begin{proof} Note that 
  $$\hatsquareup_{1112}=R_{111}\otimes_{12} R_{111}\otimes_{21} R_{21}\quad\mbox{and}\quad
\hatdumbell_{21}=R_{21}\otimes_3 R_{21}.$$
\begin{figure}[ht!]
\labellist
\tiny\hair 2pt
\pinlabel $2$ at -8  12
\pinlabel $2$ at -8 236
\pinlabel $1$ at 121  12
\pinlabel $1$ at 121 236
\pinlabel $1$ at  15  85
\pinlabel $1$ at  70  25
\pinlabel $1$ at  15 168
\pinlabel $1$ at  70 226
\pinlabel $2$ at 102  90 
\pinlabel $2$ at 102 160
\pinlabel $3$ at  74 125
\pinlabel $2$ at 276  12
\pinlabel $2$ at 276 236
\pinlabel $1$ at 404  12
\pinlabel $1$ at 404 236
\pinlabel $3$ at 378 130
\pinlabel $2$ at 350  78
\pinlabel $2$ at 350 166
\pinlabel $1$ at 295 88
\pinlabel $1$ at 295 164
\pinlabel $1$ at 320  30
\pinlabel $1$ at 320 212
\small\hair 2pt
\pinlabel $\Gamma_1$ at  65 -50
\pinlabel $\Gamma_2$ at 345 -50
\endlabellist
\centering
\figs{0.25}{interwebg1}\hspace{8.4ex}\figs{0.25}{interwebg2}
\\[2.0ex]
\caption{Intermediate webs $\Gamma_1$ and $\Gamma_2$}
\label{fig:s1112-g1g2}
\end{figure}
Let $\Gamma_i$ be the webs in Figure~\ref{fig:s1112-g1g2}, for $i=1,2$. Then 
we have 
$$\widehat{\Gamma_1}=R_{111}\otimes_{12} R_{12}\otimes_{3} R_{111}\otimes_{21} R_{21}\cong 
R_{111}\otimes_{21} R_{21}\otimes_{3} R_{111}\otimes_{21} R_{21}\cong \widehat{\Gamma_2}.
$$  
The map $f\colon \hatsquareup_{1112}\to \hatdumbell_{21}$ is the composite of 
$$\hatsquareup_{1112}{\xrightarrow{\ \ f_1\ \ }}\widehat{\Gamma_1}\{-4\}\cong \widehat{\Gamma_2}\{-4\}
{\xrightarrow{\ \ f_2\ \ }}\hatdumbell_{21}.$$
We define $f_1$ by 
$$f_1(a\stens b\stens c)=a\stens \Delta_{12}(b)\stens c,$$
where $\Delta_{12}$ is defined as in 
Definition~\ref{defn:1k-mucomu}. The map $f_2$ is defined by applying to both 
digons the map   
$R_{111}\to R_{21}\{2\}$ which corresponds to the projection onto the second summand in the 
decomposition $R_{111}\cong R_{21}\oplus x_1R_{21}$, e.g. we have  
$$x_1\mapsto 1\quad\mbox{and}\quad x_2=(x_1+x_2)-x_1\mapsto -1\quad\mbox{and}\quad 
x_1^2=(x_1+x_2)x_1-x_1x_2\mapsto x_1+x_2.$$

Similarly define $g\colon \hatdumbell_{21}\to \hatsquareup_{1112}$ as the composite of 
$$\hatdumbell_{21}{\xrightarrow{\ \ g_1\ \ }}\widehat{\Gamma_2}\cong\widehat{\Gamma_1}
{\xrightarrow{\ \ g_2\ \ }}\hatsquareup_{1112}.$$
We define $g_1$ by applying twice the inclusion map $R_{21}\hookrightarrow R_{111}$ to 
create the digons.
The map $g_2$ is defined by 
$$g_2(a\stens b\stens c\stens d)=a\stens bc\stens d,$$
where $c$ is mapped to $R_{12}$ by the inclusion map before 
applying $\mu_{12}$. One easily verifies by 
direct computation that $fg=\mbox{id}$, so $\hatdumbell_{21}$ is a direct summand of $\hatsquareup_{1112}$. 

To show that $\hattwoarcs_{21}$ is a direct summand as well, we also use an intermediate web, denoted 
$\Gamma_3$ and shown in Figure~\ref{fig:s1112-g3}. 
\begin{figure}[ht!]
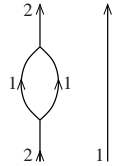

\labellist
\tiny\hair 2pt
\pinlabel $2$ at 18 12
\pinlabel $2$ at 18 230
\pinlabel $1$ at 126 12
\pinlabel $1$ at  -5 120
\pinlabel $1$ at  78 120
\endlabellist
\centering
\figs{0.25}{interwebg3}
\caption{Intermediate web $\Gamma_3$}
\label{fig:s1112-g3}
\end{figure} 
Note that $\widehat{\Gamma}_{3}=R_{111}\ot_{21} R_{21}$. 
Define $h\colon \hatsquareup_{1112}\to \hattwoarcs_{21}$ as the composite of 
$$\hatsquareup_{1112}{\xrightarrow{\ \ h_1\ \ }}\widehat{\Gamma_3}{\xrightarrow{\ \ h_2\ \ }}
\hattwoarcs_{21}\{2\}.$$
In this case $h_1$ is given by 
$$h_1(a\stens b\stens c)=ab\stens c$$
and $h_2$ by applying the same projection $R_{111}\to R_{21}\{2\}$ as above. Inversely we define 
$j\colon \hattwoarcs_{21}\to \hatsquareup_{1112}$ as the composite of 
$$\hattwoarcs_{21}\{2\}{\xrightarrow{\ \ j_1\ \ }}\widehat{\Gamma_3}\{2\}{\xrightarrow{\ \ j_2\ \ }}
\hatsquareup_{1112}.$$
The first map is defined by the inclusion $R_{21}\hookrightarrow R_{111}$. The second map is defined 
by 
$$j_2(a\stens b)=\Delta_{11}^{x_2,x_3}(a)\stens b.$$
Again, by direct computation, it is straightforward to check that $hj=-2\mbox{id}$, so 
$\hattwoarcs_{21}\{2\}$ is 
also a direct summand of $\hatsquareup_{1112}$. 

Also by direct computation one easily checks that 
$hg=0$ and $fj=0$. Finally we have to show that $(g,j)$ is 
surjective. Since all maps 
involved are bimodule maps and $R_{111}\cong R_{21}\oplus x_2R_{21}$ and $R_{111}\cong R_{12}\oplus 
x_3R_{12}$, we only have to show that 
$1\stens 1\stens 1$ and $1\stens x_2\stens 1$ are in its image. We have $g(1\stens 1)=1\stens 1\stens 1$ and 
$j(1)+g(x_3\stens 1)=1\stens x_2\stens 1$. This finishes the proof of the lemma.  
\end{proof}

\begin{lem}
\label{lem:1122-square}
Let $\hatsquareup_{1122}$ be the square in A5. Then we have
$$\hatsquareup_{1122}\cong \hatdumbell_{31}\oplus \hattwoarcs_{31}\{2\}\oplus \hattwoarcs_{31}\{4\}.$$
\end{lem}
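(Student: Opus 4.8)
The plan is to mimic the strategy of Lemma~\ref{lem:1112-square}, since the square $\squareup_{1122}$ in A5 has the same shape as $\squareup_{1112}$ but with the outer labels $2$ and $1$ replaced by $2$ and $1$ reading bottom-to-top with an inner $1$-labelled edge being now an inner $2$-labelled column. Concretely, $\hatsquareup_{1122}=R_{112}\ot_{12}R_{112}\ot_{21}R_{21}$ (or the appropriate analogue, to be read off carefully from the web in A5), while $\hatdumbell_{31}=R_{31}\ot_4 R_{31}$ and $\hattwoarcs_{31}=R_{31}$. First I would insert an intermediate web $\Gamma_1$ obtained by resolving one of the two $\YGraph$/$\UYGraph$ pairs into a digon, apply an associativity/commutativity isomorphism of tensor products to pass to a mirrored intermediate web $\Gamma_2$ (exactly as in the previous lemma, using that induction and restriction commute appropriately), and then collapse back. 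The map $f_1$ into $\widehat{\Gamma_1}$ would be built from $\Delta_{12}$ of Definition~\ref{defn:1k-mucomu}, and $f_2$ from a projection coming from a direct sum decomposition of the relevant ring, here $R_{112}\cong R_{31}\oplus(\cdots)$ as $R_{31}$-modules; the key input is that $R_{12}\cong R_3\oplus x_1 R_3 \oplus x_1^2 R_3$ (so that $R_{1}$-over-$R_{3}$ has rank $3$, matching $\qbin{3}{1}=1+q^2+q^4$), which is Lemma~\ref{lem:digon} with $i=1$, $j=2$.

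The concrete steps, in order: (1) Write down $\hatsquareup_{1122}$ explicitly as an iterated tensor product of partial-symmetrisation rings, and identify the three candidate summands $\hatdumbell_{31}$, $\hattwoarcs_{31}\{2\}$, $\hattwoarcs_{31}\{4\}$ together with their $q$-shifts (the shifts $\{2\}$ and $\{4\}$ should match the two ``extra'' powers of $q$ in the coefficient $q^2+q^4$ appearing in A5). (2) Construct $f\colon\hatsquareup_{1122}\to\hatdumbell_{31}$ and $g$ in the reverse direction through intermediate webs as above, and check $fg=\mathrm{id}$ by a direct computation on the generator $1\stens 1$, so that $\hatdumbell_{31}$ splits off. (3) Construct, for the two arc summands, maps $h^{(1)},h^{(2)}$ from $\hatsquareup_{1122}$ to $\hattwoarcs_{31}\{2\}$ and $\hattwoarcs_{31}\{4\}$ respectively, via an intermediate web $\Gamma_3$ of the type in Figure~\ref{fig:s1112-g3} (here with an inner $2$-labelled edge), using $\mu_{12}$ to merge and then the projection $R_{112}\to R_{31}$ onto the appropriate graded pieces; and inverse maps $j^{(1)},j^{(2)}$ using inclusions and a comultiplication $\Delta$ on the appropriate pair of variables. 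Compute the scalars $h^{(k)}j^{(\ell)}$ and verify they form an invertible (after normalisation) matrix, i.e. $h^{(k)}j^{(k)}$ is a nonzero scalar and $h^{(k)}j^{(\ell)}=0$ for $k\ne\ell$, and also $h^{(k)}g=0$, $f j^{(\ell)}=0$. (4) Finally check that $(g,j^{(1)},j^{(2)})$ is jointly surjective: since all maps are $R_{31}$-bimodule maps, it suffices to hit a set of $R_{31}$-bimodule generators of $\hatsquareup_{1122}$ — for instance $1\stens 1\stens 1$, $1\stens x_?\stens 1$, $1\stens x_?^2\stens 1$ for the relevant inner variable — and exhibit each as an explicit combination of images of $g$ and the $j^{(\ell)}$'s, exactly as $g(1\stens1)$ and $j(1)+g(x_3\stens1)$ were used before.

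The main obstacle I expect is bookkeeping rather than conceptual: there are now \emph{two} arc summands with distinct shifts, so the ``diagonalisation'' in step (3) requires care — one must choose the projections $R_{112}\to R_{31}$ onto the right graded components so that the matrix $(h^{(k)}j^{(\ell)})$ is triangular (or diagonal) and invertible over $\mathbb{C}$, and similarly choose the comultiplications in the $j^{(\ell)}$ on the correct variables. A secondary subtlety is getting every grading shift right, since here a $3$-labelled edge meets a $2$-labelled one and the relevant Grassmannian is $G(1,3)$ (equivalently $G(2,3)$), whose cohomology has Poincaré polynomial $1+q^2+q^4$; one has to make sure the internal shifts introduced by $\Delta_{12}$ and by the projections add up to reproduce precisely the shifts $\{2\}$ and $\{4\}$ in the statement. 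Once the maps are set up correctly, the verifications $fg=\mathrm{id}$, $h^{(k)}j^{(\ell)}=\delta_{k\ell}\cdot(\text{unit})$, $h^{(k)}g=0=fj^{(\ell)}$, and joint surjectivity are all finite computations in polynomial rings, entirely analogous to those in Lemma~\ref{lem:1112-square}, and I would carry them out by evaluating on the handful of bimodule generators indicated above.
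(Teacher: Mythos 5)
Your high-level plan coincides with the paper's: split off $\hatdumbell_{31}$ via intermediate webs in which the middle of the square is collapsed to a thick edge, split off the two arc summands via a digon-type web, verify orthogonality/invertibility of the resulting $2\times 2$ matrix, and check joint surjectivity on a small set of bimodule generators. That architecture is correct and is exactly what the paper does.

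However, one concrete step in your plan would fail. You propose to build $f_1$ from $\Delta_{12}$ of Definition~\ref{defn:1k-mucomu}. That map lands in $R_{12}\otimes_3 R_{12}$ — a digon over a $3$-labelled edge, i.e.\ the $G(1,3)$-comultiplication. But in the A5 square both inner vertical edges carry the label $2$, so fusing them produces a $4$-labelled edge, and the intermediate web $\Gamma_1$ has a $2|2$-digon over a $4$. The map you need there is $\Delta_{22}$ of Definition~\ref{defn:22-mucomu}, $R_{22}\to R_{22}\otimes_4 R_{22}$; $\Delta_{12}$ does not type-check in this position. Relatedly, your decomposition ``$R_{112}\cong R_{31}\oplus\cdots$'' uses the wrong ring: $R_{31}$ (polynomials invariant under $S_3\times S_1$ on $x_1,x_2,x_3;x_4$) is not even a subring of $R_{112}$. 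The decomposition you actually want is $R_{211}\cong R_{31}\oplus x_3R_{31}\oplus x_3^2R_{31}$, with summands in $q$-degrees $0,2,4$ — your instinct that these should match $\qbin{3}{1}=1+q^2+q^4$ is correct, it is just attached to the wrong pair of rings.

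Two smaller points. The matrix $(h^{(k)}j^{(\ell)})$ is not diagonal; it comes out upper-triangular, namely $\bigl(\begin{smallmatrix}1 & -x_4\\ 0& 1\end{smallmatrix}\bigr)$, so your initial requirement ``$h^{(k)}j^{(\ell)}=0$ for $k\ne\ell$'' is not satisfied, though your later remark allowing a triangular but invertible matrix is the right expectation and saves the argument. Also, do not expect $fg=\mathrm{id}$ on the nose: it comes out as a nonzero scalar multiple of the identity ($2\cdot\mathrm{id}$ in the paper), which still suffices to split off $\hatdumbell_{31}$ but must be stated correctly.
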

\proof
The arguments are analogous to the ones used in the proof of Lemma~\ref{lem:1112-square}. To show that 
$\hatdumbell_{22}$ is a direct summand of $\hatsquareup_{1122}$, use the intermediate webs in Figure~\ref{fig:s1122-g1g2}. 
\begin{figure}[ht!]
\labellist
\tiny\hair 2pt
\pinlabel $3$ at  -6  12
\pinlabel $3$ at  -6 235
\pinlabel $1$ at 120  12
\pinlabel $1$ at 120 235
\pinlabel $2$ at  10  83
\pinlabel $1$ at  65  24
\pinlabel $2$ at  10 175
\pinlabel $1$ at  65 226
\pinlabel $2$ at 103  90 
\pinlabel $2$ at 103 160
\pinlabel $4$ at  75 124
\pinlabel $3$ at 277  12
\pinlabel $3$ at 277 235
\pinlabel $1$ at 402  12
\pinlabel $1$ at 402 235
\pinlabel $4$ at 378 124
\pinlabel $3$ at 350  80
\pinlabel $3$ at 350 167
\pinlabel $2$ at 292 86
\pinlabel $2$ at 292 164
\pinlabel $1$ at 320  30
\pinlabel $1$ at 320 220
\endlabellist
\centering
\figs{0.25}{interwebg1}\hspace{8.4ex}\figs{0.25}{interwebg2}
\caption{Intermediate webs for $\squareup_{1122}$}
\label{fig:s1122-g1g2}
\end{figure}

With the same notation as before let 
$$f_1(a\stens b\stens c)=a\stens \Delta_{22}(b)\stens c$$ 
and for $f_2$ use for both digons the map $R_{211}\to R_{31}\{4\}$ which corresponds to the 
projection onto the third direct summand in the decomposition $R_{211}\cong R_{31}\oplus x_3R_{31}\oplus 
x_3^2R_{31}$. 

For $g_1$ we use twice the inclusion $R_{31}\hookrightarrow R_{211}$ to create the digons and 
$$g_2(a\stens b\stens c\stens d)=a\stens bc\stens d.$$

To show that $\hattwoarcs_{22}\{2\}\oplus\hattwoarcs_{22}\{4\}$ is a direct summand 
of $\hatsquareup_{1122}$ use the intermediate 
web in Figure~\ref{fig:s1122-g3}. 

\begin{figure}[ht!]
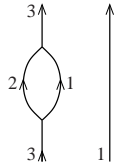

\labellist
\tiny\hair 2pt
\pinlabel $3$ at  20 13
\pinlabel $3$ at  20 230
\pinlabel $1$ at 125 13
\pinlabel $2$ at  -8 120
\pinlabel $1$ at  79 120
\endlabellist
\centering
\figs{0.25}{interwebg3}
\caption{Intermediate web}
\label{fig:s1122-g3}
\end{figure} 

\n Define $h_1$ by 
$$h_1(a\stens b\stens c)=ab\stens c$$
and $h_2$ by applying the map $R_{211}\to R_{31}\{2\}\oplus R_{31}\{4\}$ corresponding to the projection on the last 
two direct summands in the decomposition of $R_{211}$ above. 

For $j_1$ use the map $R_{31}\{2\}\oplus R_{31}\{4\}\to R_{211}$ defined by 
$$(1,0)\mapsto 1\quad\mbox{and}\quad (0,1)\mapsto x_3$$
to create the digon. Define $j_2$ by 
$$j_2(a\stens b)=\Delta_{11}^{x_3,x_4}(a)\stens b.$$ 
  
An easy calculation shows that 
$$hj=
\begin{pmatrix}
1&-x_4\\
0&1
\end{pmatrix}
,$$
which is invertible. This shows that $R_{31}\{2\}\oplus R_{31}\{4\}$ is a direct summand of 
$\hatsquareup_{1122}$. 

Using the rewriting rules for $\Delta_{22}$, which were given below its definition, one easily checks 
that $gf=2\mbox{id}$. Therefore $\hatdumbell_{31}$ is a direct summand of $\hatsquareup_{1122}$ too. 

Another easy calculation shows that $hg=0$ and a slightly harder one that $fj=0$. 

Remains to show that $(g,j)$ is surjective. It suffices to show that $1\stens 1\stens 1, 1\stens x_3\stens 1$ 
and $1\stens x_3^2\stens 1$ are in its image. We have 
\begin{align*}
1\stens 1\stens 1 &= g(1\stens 1)\\
1\stens x_3\stens 1 &= j(1,0)+g(x_4\stens 1) \\
1\stens x_3^2\stens 1 &= j(0,1)+j(x_4,0)+g(x_4^2\stens 1) 
\rlap{\hspace*{20.4ex}\qed}
\end{align*}

\begin{lem}
\label{lem:2113-square}
Let $\hatsquareup_{2113}$ be the square in A7. We have 
$$\hatsquareup_{2113}\cong \hatdumbell_{22}^{13}\oplus \hatHGraph_{22}^{13}\{2\}.
$$
\end{lem}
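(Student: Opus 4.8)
The plan is to mimic the proofs of Lemmas~\ref{lem:1112-square} and~\ref{lem:1122-square}. First I would fix a height function on the square $\squareup_{2113}$ of A7 and read it from bottom to top, writing $\hatsquareup_{2113}$ as an iterated tensor product of invariant subrings of $R=\bC[x_1,x_2,x_3,x_4]$: the two bottom legs labelled $2,2$ give $R_{22}$; the lower pair of trivalent vertices (with the edge labelled $1$ between them and the edge labelled $3$ on the left) turns this into $R_{211}$, viewed as an $R_{31}$-$R_{22}$-bimodule; and the upper pair of vertices (with the edge labelled $2$ between them and the legs $1,3$ on top) produces the final $R_{13}$-$R_{22}$-bimodule. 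In the same way I would record that $\hatdumbell_{22}^{13}=R_{13}\ot_4 R_{22}$ and identify $\hatHGraph_{22}^{13}$ with $R_{112}$, viewed as an $R_{13}$-$R_{22}$-bimodule. As a first sanity check I would compare graded ranks over $R_{22}$: the dumbell contributes $\qbin{4}{1}$ and the shifted $H$-web contributes $q^2\qbin{2}{1}$, summing to $1+2q^2+2q^4+q^6$, in agreement with the right-hand side of A7.

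Next I would split off the two summands. For the dumbell, as in Lemma~\ref{lem:1112-square}, I would introduce intermediate webs $\Gamma_1,\Gamma_2$ related by a reassociation isomorphism $\widehat{\Gamma_1}\cong\widehat{\Gamma_2}$, and take $f\colon\hatsquareup_{2113}\to\hatdumbell_{22}^{13}$ to be the composite of: a map $f_1$ that applies $\Delta_{22}$ of Definition~\ref{defn:22-mucomu} to the relevant $R_{22}$-factor, inserting a digon with middle label $4$; the reassociation; and a map $f_2$ that projects onto a summand of a free-module decomposition of the type $R_{211}\cong R_{31}\oplus x_3R_{31}\oplus x_3^2R_{31}$ used in the previous proof, together with the resulting grading shift. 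The reverse map $g$ is built from the inclusions $R_{31}\hookrightarrow R_{211}$ that create the digon, the same reassociation, and $\mu_{22}$ contracting a digon. The $H$-web summand is treated with a further intermediate web $\Gamma_3$: the map $h\colon\hatsquareup_{2113}\to\hatHGraph_{22}^{13}\{2\}$ contracts a digon by multiplication and then projects, while the reverse map $j$ includes and then applies $\Delta_{11}$ of Definition~\ref{defn:1k-mucomu} for the appropriate pair of variables. One then checks, by direct computation, that the composites $fg$ and $hj$ are invertible (a nonzero scalar, or an invertible matrix as in Lemma~\ref{lem:1122-square}), that $hg=0$ and $fj=0$, and finally that $(g,j)$ is surjective, by verifying that generators of $\hatsquareup_{2113}$ of the form $1\stens x_3^k\stens 1$, for the small values of $k$ forced by the decomposition above, lie in its image.

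I expect the main obstacle to be the bookkeeping around $\Delta_{22}$: its defining formula has six terms together with the cocommutative completion $\leftrightarrow$, so the identities involving $f$ (in particular $fj=0$) require expanding it and repeatedly applying the rewriting rules that express the $\pi_{ij}$ in terms of the $\pi'_{ij}$ and elements of $R_4$ listed just below Definition~\ref{defn:22-mucomu}; the analogous verification was already ``the slightly harder one'' in Lemma~\ref{lem:1122-square}. A further complication is that the dumbell and the $H$-web here are ``twisted'', with top legs $1,3$ rather than $2,2$, so one must be careful about which variables are being symmetrised at each stage, about which action is restricted at the final merge, and about the grading shifts. Once the tensor-product descriptions and the variable assignments are pinned down, the remaining calculations are routine but lengthy.
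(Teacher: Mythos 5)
Your high-level plan is right and matches the paper's overall strategy (split off the two summands via maps through intermediate webs, verify the composites, verify orthogonality, then conclude). However, three of the details would not go through as you describe.

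First, $\Delta_{22}$ is the wrong coproduct for the dumbell summand. In $\squareup_{2113}$ the two vertical interior edges carry the labels $3$ and $1$, so the $4$-labelled digon you must create splits as $3+1$, and the relevant Frobenius coproduct is the one for $R_4\subset R_{31}$ (what the paper calls $\Delta_{31}$), applied to the middle factor of $\hatsquareup_{2113}=R_{121}\otimes_{31}R_{211}\otimes_{22}R_{22}$. Using $\Delta_{22}$ of Definition~\ref{defn:22-mucomu} would produce a $2+2$ digon that does not match the square's geometry, so the bookkeeping difficulties with the six-term formula for $\Delta_{22}$ that you anticipate are a red herring here. Relatedly, the final projection is not a single map $R_{211}\to R_{31}\{4\}$: the two digons to be contracted sit on the $1$- and $3$-labelled legs, so the paper's $\psi_1^3$ applies the projections $R_{121}\to R_{13}\{4\}$ and $R_{211}\to R_{22}\{2\}$ simultaneously.

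Second, the $H$-web summand is genuinely harder than in Lemmas~\ref{lem:1112-square} and~\ref{lem:1122-square}, and a single intermediate web in the pattern of $\Gamma_3$ will not suffice. Since $\hatHGraph_{22}^{13}=R_{112}\otimes_{22}R_{22}$, relating it to $R_{121}\otimes_{31}R_{211}\otimes_{22}R_{22}$ forces one to pass through $R_{1111}$: the paper uses four intermediate webs $\Lambda_1,\dots,\Lambda_4$, with $\Delta_{21}$ rather than $\Delta_{11}$ at the key step, and the splittings $R_{1111}\cong R_{121}\oplus x_2R_{121}$ and $R_{1111}\cong R_{112}\oplus x_3R_{112}$ for the two projections. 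There is no room for these moves in a one-web scheme, and simply multiplying the first two tensor factors of the square does not land in the right ring.

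Third, a minor but worthwhile point: the paper closes the argument not by checking generators but by a rank count. Once $\psi_1\phi_1$ and $\psi_2\phi_2$ are invertible and the cross terms $\psi_1\phi_2,\psi_2\phi_1$ vanish, $(\phi_1,\phi_2)$ is injective; since source and target are both graded free $R_{13}$-modules of rank $9$ with the same gradings, injectivity already gives an isomorphism. Your generator check would also work in principle, but it is more laborious and not what the paper does.
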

\begin{proof}
We first define the bimodule map 
$$\phi_1\colon \hatdumbell_{22}^{13}=R_{13}\otimes_4 R_{22}\to R_{121}
\otimes_{31} R_{211}\otimes_{22}R_{22}=\hatsquareup_{2113}.$$ We use the two 
intermediate bimodules $\widehat{\Gamma_1}=R_{121}\otimes_{13}R_{13}
\otimes_{4}R_{211}\otimes_{22}R_{22}$ and $\widehat{\Gamma_2}=R_{121}\otimes_{31}R_{31}
\otimes_{4}R_{211}\otimes_{22}R_{22}$ (see Figure~\ref{fig:s2113-g1g2}).
\begin{figure}[ht!]
\labellist
\tiny\hair 2pt
\pinlabel $2$ at -8  13
\pinlabel $1$ at  -5 235
\pinlabel $2$ at 125  13
\pinlabel $3$ at 121 235
\pinlabel $1$ at 100  90
\pinlabel $1$ at  75  34
\pinlabel $1$ at 100 155
\pinlabel $2$ at  78 216
\pinlabel $2$ at  50  79 
\pinlabel $3$ at  50 169
\pinlabel $4$ at  20 122
\pinlabel $2$ at 274  13
\pinlabel $1$ at 277 235
\pinlabel $2$ at 406  13
\pinlabel $3$ at 403 235
\pinlabel $4$ at 325 122
\pinlabel $1$ at 388  80
\pinlabel $1$ at 388 175
\pinlabel $3$ at 293 80
\pinlabel $3$ at 293 165
\pinlabel $1$ at 340  23
\pinlabel $2$ at 340 224
\small\hair 2pt
\pinlabel $\Gamma_1$ at  64 -50
\pinlabel $\Gamma_2$ at 345 -50
\endlabellist
\centering
\figs{0.25}{interweb-o}\hspace{8.4ex}\figs{0.25}{interweb-p}
\\[2.0ex]
\caption{Intermediate webs $\Gamma_1$ and $\Gamma_2$}
\label{fig:s2113-g1g2}
\end{figure}
Then $\phi_1$ is the 
composite 
$$\hatdumbell_{22}^{13}{\xrightarrow{\ \ \phi_1^1\ \ }}\widehat{\Gamma_1}
{\xrightarrow{\ \ \phi_1^2\ \ }}\widehat{\Gamma_2}{\xrightarrow{\ \ \phi_1^3\ \ }}  
\hatsquareup_{2113}
$$
with
\begin{align*}
\phi_1^1(a\stens b) &= 1\stens a\stens 1\stens b\\ 
\phi_1^2(a\stens b\stens c\stens d) &= ab\stens 1\stens c\stens d\\
\phi_1^3(a\stens b\stens c\stens d) &= a\stens bc\stens d.
\end{align*}
Conversely we define $\psi_1$ as the composite 
$$\hatsquareup_{2113}{\xrightarrow{\ \ \psi_1^1\ \ }}\widehat{\Gamma_2}
{\xrightarrow{\ \ \psi_1^2\ \ }}\widehat{\Gamma_1}{\xrightarrow{\ \ \psi_1^3\ \ }}  
\hatdumbell_{22}^{13}
$$
with
\begin{align*}
\psi_1^1(a\stens b\stens c) &= a\stens \Delta_{31}(b)\stens c\\ 
\psi_1^2(a\stens b\stens c\stens d) &= ab\stens 1\stens c\stens d
\end{align*}
and with $\psi_1^3$ defined by applying the maps 
$R_{121}\to R_{13}\{4\}$ and $R_{211}\to R_{22}\{2\}$ which are the projections 
onto the last direct summands in the decompositions 
$R_{121}\cong R_{13}\oplus x_4R_{13}\oplus x_4^2R_{13}$ and 
$R_{211}\cong R_{22}\oplus x_3R_{22}$.  
A short calculation shows that $\psi_1\phi_1=-\mbox{id}$, which proves that 
$\hatdumbell_{22}^{13}$ is a direct summand of $\hatsquareup_{2113}$.

Let us now define $\phi_2\colon \hatHGraph_{22}^{13}\to \hatsquareup_{2113}$. 
Note that $\hatHGraph_{22}^{13}=R_{112}\otimes_{22} R_{22}$. 
Again we use certain intermediate bimodules: $\widehat{\Lambda}_1=R_{1111}
\otimes_{112}R_{112}\otimes_{22} R_{22}$, $\widehat{\Lambda}_{2}=R_{1111}
\otimes_{211}R_{211}\otimes_{22} R_{22}$, $\widehat{\Lambda}_3=R_{1111}
\otimes_{211}R_{211}\otimes_{31}R_{211}\otimes_{22} R_{22}$ and 
$\widehat{\Lambda}_4=R_{1111}\otimes_{121}R_{121}
\otimes_{31}R_{211}\otimes_{22} 
R_{22}$ (see Figure~\ref{fig:s2113-L}).

\begin{figure}[ht!]
\labellist
\tiny\hair 2pt
\pinlabel $2$ at  -8  12
\pinlabel $1$ at  -6 235
\pinlabel $2$ at 146  12
\pinlabel $3$ at 143 235
\pinlabel $1$ at  92  88
\pinlabel $1$ at 145  88
\pinlabel $2$ at 130 138
\pinlabel $1$ at  72 153
\pinlabel $2$ at 307  12
\pinlabel $1$ at 309 235
\pinlabel $2$ at 463  12
\pinlabel $3$ at 460 235
\pinlabel $1$ at 372 146
\pinlabel $1$ at 399  77
\pinlabel $1$ at 459 120
\pinlabel $2$ at 405 183
\pinlabel $2$ at 630  12
\pinlabel $1$ at 625 235
\pinlabel $2$ at 772  12
\pinlabel $3$ at 775 235
\pinlabel $1$ at 695 115
\pinlabel $2$ at 644 102
\pinlabel $3$ at 667  66
\pinlabel $1$ at 670 160
\pinlabel $1$ at 714  29
\pinlabel $1$ at 772 122
\pinlabel $2$ at 720 202
\pinlabel $2$ at  944  12
\pinlabel $1$ at  938 235
\pinlabel $2$ at 1088  12
\pinlabel $3$ at 1090 235
\pinlabel $1$ at 1010 181
\pinlabel $1$ at 1034 125
\pinlabel $3$ at  975  64
\pinlabel $2$ at  977 133
\pinlabel $1$ at 1029  30
\pinlabel $1$ at 1088 120
\pinlabel $2$ at 1045 205
\small\hair 2pt
\pinlabel $\Lambda_1$ at   80 -50
\pinlabel $\Lambda_2$ at  395 -50
\pinlabel $\Lambda_3$ at  710 -50
\pinlabel $\Lambda_4$ at 1024 -50
\endlabellist
\centering
\figs{0.25}{interweb2113-L1}\hspace{8.4ex}
\figs{0.25}{interweb2113-L2}\hspace{8.4ex}
\figs{0.25}{interweb2113-L3}\hspace{8.4ex}
\figs{0.25}{interweb2113-L4}
\\[2.0ex]
\caption{Intermediate webs $\Lambda_1$, \ldots , $\Lambda_4$}
\label{fig:s2113-L}
\end{figure}

We define $\phi_2$ as the composite 
$$\hatHGraph_{22}^{13}
{\xrightarrow{\ \ \phi_2^1\ \ }}\widehat{\Lambda}_1
{\xrightarrow{\ \ \phi_2^2\ \ }}\widehat{\Lambda}_2
{\xrightarrow{\ \ \phi_2^3\ \ }}\widehat{\Lambda}_3
{\xrightarrow{\ \ \phi_2^4\ \ }}\widehat{\Lambda}_4
{\xrightarrow{\ \ \phi_2^5\ \ }}
\hatsquareup_{2113}
$$
with 
\begin{align*}
\phi_1^1(a\stens b) &= 1\stens a\stens b\\
\phi_1^2(a\stens b\stens c) &= ab\stens 1\stens c\\
\phi_1^3(a\stens b\stens c) &= a\stens \Delta_{21}^{x_1x_2x_3}(b)\stens c\\
\phi_1^4(a\stens b\stens c\stens d) &= ab\stens 1\stens c\stens d
\end{align*}
and with $\phi_1^5$ defined by the map $R_{1111}\to R_{121}\{2\}$ which is 
the projection onto the second direct summand in the decomposition 
$R_{1111}\cong R_{121}\oplus x_2R_{121}$.

Conversely, we define $\psi_2$ as the composite
$$
\hatsquareup_{2113}
{\xrightarrow{\ \ \psi_2^1\ \ }}\widehat{\Lambda}_4
{\xrightarrow{\ \ \psi_2^2\ \ }}\widehat{\Lambda}_3
{\xrightarrow{\ \ \psi_2^3\ \ }}\widehat{\Lambda}_2
{\xrightarrow{\ \ \psi_2^4\ \ }}\widehat{\Lambda}_1
{\xrightarrow{\ \ \psi_2^5\ \ }}
\hatHGraph_{22}^{13}$$
with
\begin{align*}
\psi_1^1(a\stens b\stens c) &= 1\stens a\stens b\stens c\\
\psi_1^2(a\stens b\stens c\stens d) &= ab\stens 1\stens c\stens d\\
\psi_1^3(a\stens b\stens c\stens d) &= a\stens bc\stens d\\
\psi_1^4(a\stens b\stens c) &= ab\stens 1\stens c
\end{align*}
and with $\psi_1^5$ defined by the map $R_{1111}\to R_{112}\{2\}$ which is 
the projection onto the second direct summand in the decomposition 
$R_{1111}\cong R_{112}\oplus x_3R_{121}$.
A simple calculation shows that $\psi_2\phi_2=\id$.

One can also easily check that $\psi_2\phi_1=0$ and $\psi_1\phi_2=0$. This shows that 
$(\phi_1,\phi_2)$ is injective. To show that 
$(\phi_1,\phi_2)$ is surjective note that both maps are left $R_{13}$-module maps 
and that the source and the target are both free $R_{13}$-modules of rank 9 with 
the same gradings. 
\end{proof}

\section{The link homology}
\label{sec:HOMFLY-PT}

Let us define the coloured HOMFLY-PT homology for links with components labelled 1 and 2. 
We use a similar setup to the one in~\cite{Kh}. To each braid diagram we associate a complex of bimodules (defined below)
obtained from the 
categorified MOY calculus. This complex is 
invariant up to homotopy under the braidlike Reidemeister II and III moves. Then we take the Hochschild 
homology of each bimodule in the complex, which corresponds to the categorification of the Markov trace. 
This induces a complex of vector spaces whose homology is the 
one we are 
looking for. The latter is still invariant under the second and third Reidemeister moves, because 
the Hochschild homology is a covariant functor, and also under the 
Markov moves, as we will show. 
Therefore we obtain a triply graded link homology. By taking the graded dimensions of 
the homology groups we get a triply graded link polynomial. \\

To define the complex of bimodules associated to a braid it suffices to define it for a positive and for 
a negative crossing only. For an arbitrary braid one then tensors these complexes over all crossings. 
To each crossing with both strands labelled by 2 we associate a complex with three terms. For a positive, resp. negative, 
crossing between strands labelled 2 the terms in the complex are 
$\hattwoarcs_{22}\{6\}\to \hatsquareup_{3111}\{2\}\to \hatdumbell_{22}$, resp. 
$\hatdumbell_{22}\{-8\}\to \hatsquareup_{3111}\{-8\}\to \hattwoarcs_{22}\{-6\}$ 
(see Figures~\ref{fig:poscross}-\ref{fig:negcross}).  
\begin{figure}[ht!]
\labellist
\pinlabel $=$ at 235 112
\pinlabel $q^{2}$ at 720 115
\pinlabel $q^6$ at 300 115
\tiny\hair 2pt
\pinlabel $2$ at -12  19
\pinlabel $2$ at 175  19
\pinlabel $2$ at 313  19
\pinlabel $2$ at 501  19
\pinlabel $2$ at 757  19
\pinlabel $2$ at 758 206
\pinlabel $2$ at 947  19
\pinlabel $2$ at 945 206
\pinlabel $3$ at 780 110
\pinlabel $1$ at 927 110
\pinlabel $1$ at 852  38
\pinlabel $1$ at 852 177
\pinlabel $2$ at 1201  16
\pinlabel $2$ at 1203 206
\pinlabel $2$ at 1383  16
\pinlabel $2$ at 1380 206
\pinlabel $4$ at 1310 110
\pinlabel $d_1^+$ at 580 140
\pinlabel $d_2^+$ at 1050 140
\endlabellist
\centering
\figs{0.23}{posXing}
\caption{The complex of a positive crossing }
\label{fig:poscross}
\end{figure}

\begin{figure}[ht!]
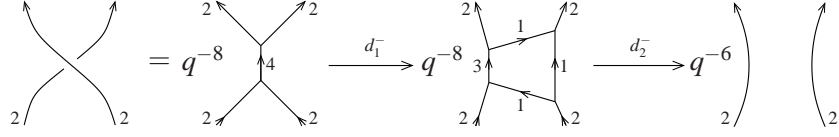

\labellist
\pinlabel $=$ at 235 112
\pinlabel $q^{-6}$ at 1180 115
\pinlabel $q^{-8}$ at 720 115
\pinlabel $q^{-8}$ at 310 115
\tiny\hair 2pt
\pinlabel $2$ at -12 17
\pinlabel $2$ at 175 17
\pinlabel $2$ at 316 17
\pinlabel $2$ at 499 17
\pinlabel $4$ at 426 110
\pinlabel $2$ at 316 204
\pinlabel $2$ at 499 204
\pinlabel $2$ at 756 17
\pinlabel $2$ at 760 204
\pinlabel $2$ at 948 17
\pinlabel $2$ at 945 204
\pinlabel $3$ at 780 107
\pinlabel $1$ at 928 107
\pinlabel $1$ at 854  40
\pinlabel $1$ at 854 176
\pinlabel $2$ at 1204 17
\pinlabel $2$ at 1389 17
\pinlabel $d_1^-$ at 605 140
\pinlabel $d_2^-$ at 1060 140
\endlabellist
\centering
\figs{0.23}{negXing}
\caption{The complex of a negative crossing }
\label{fig:negcross}
\end{figure}

In both cases, the cohomological degree is fixed by putting the bimodule $\hatdumbell_{22}$ in cohomological degree 0. 

To define the differentials we need the 
intermediate webs $\Phi, \Psi$ and $\Omega$ (see Figure~\ref{fig:inter-fpo}). 
Note that $\widehat{\Psi}\cong 
\widehat{\Omega}$. 

\begin{figure}[ht!]
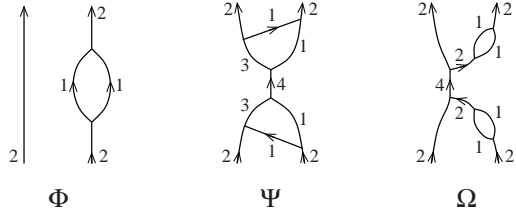

\labellist
\tiny\hair 2pt
\pinlabel $2$ at  -8  13
\pinlabel $2$ at 128  13
\pinlabel $2$ at 125 230
\pinlabel $1$ at  68 120
\pinlabel $1$ at 152 120
\pinlabel $2$ at 313  13
\pinlabel $2$ at 316 235
\pinlabel $2$ at 446  13
\pinlabel $2$ at 444 235
\pinlabel $3$ at 341 150
\pinlabel $3$ at 341  90
\pinlabel $1$ at 382  22
\pinlabel $1$ at 382 226
\pinlabel $1$ at 429  70
\pinlabel $1$ at 429 180
\pinlabel $4$ at 396 122
\pinlabel $2$ at 606  13
\pinlabel $2$ at 609 235
\pinlabel $2$ at 740  13
\pinlabel $2$ at 737 235
\pinlabel $4$ at 635 120
\pinlabel $2$ at 664  78
\pinlabel $2$ at 664 166
\pinlabel $1$ at 694  28
\pinlabel $1$ at 694 216
\pinlabel $1$ at 723  80
\pinlabel $1$ at 723 170
\small\hair 2pt
\pinlabel $\Phi$ at   60 -50
\pinlabel $\Psi$ at  380 -50
\pinlabel $\Omega$ at  675 -50
\endlabellist
\centering
\figs{0.25}{interweb-f}\hspace{8.4ex}
\figs{0.25}{interweb-p}\hspace{8.4ex}
\figs{0.25}{interweb-o}
\\[2.0ex]
\caption{Intermediate webs $\Phi$, $\Psi$ and $\Omega$}
\label{fig:inter-fpo}
\end{figure}
The differential 
$d_1^+$ is the composite of 
$$\hattwoarcs_{22}{\xrightarrow{\ \ d_{11}^+\ \ }}\widehat{\Phi}{\xrightarrow{\ \ d_{12}^+\ \ }}
\hatsquareup_{3111}\{-4\},$$
where $d_{11}^+$ is defined by the inclusion $R_{22}\hookrightarrow R_{211}$ to create the digon and 
$$d_{12}^+(a\stens b)=\Delta_{21}^{x_1,x_2,x_3}(a)\stens b.$$
\noindent The differential $d_2^+$ is the composite of 
$$\hatsquareup_{3111}\{-4\}{\xrightarrow{\ \ d_{21}^+\ \ }}\widehat{\Psi}\{-10\}\cong 
\widehat{\Omega}\{-10\}{\xrightarrow{\ \ d_{22}^+\ \ }}\hatdumbell_{22}\{-6\},$$
where $d_{21}^+$ is defined by 
$$d_{21}^+(a\stens b\stens c)=a\cdot\Delta_{31}(b)\stens c$$
and $d_{22}^+$ by applying twice the map $R_{211}\to R_{22}\{2\}$ given by the 
projection onto the second direct summand in the decomposition $R_{211}=R_{22}\oplus x_3R_{22}$.
Direct computation shows that $d_2^+d_1^+=0$. 

The first differential in the complex associated to a negative crossing is the composite of 
$$\hatdumbell_{22}{\xrightarrow{\ \ d_{11}^-\ \ }}\widehat{\Omega}\cong\widehat{\Psi}
{\xrightarrow{\ \ d_{12}^-\ \ }}\hatsquareup_{3111},$$
where $d_{11}^-$ is defined by applying twice the inclusion $R_{22}\hookrightarrow R_{211}$ to create the 
digons and 
$$d_{12}^-(a\stens b\stens c\stens d)=a\stens bc\stens d.$$
\noindent The differential $d_2^-$ is the composite of 
$$\hatsquareup_{3111}{\xrightarrow{\ \ d_{21}^-\ \ }}\widehat{\Phi}
{\xrightarrow{\ \ d_{22}^-\ \ }}\hattwoarcs_{22}\{2\},$$
where $d_{21}^-$ is defined by 
$$d_{21}^-(a\stens b\stens c)=ab\stens c$$
and $d_{22}^-$ by applying the same projection $R_{211}\to R_{22}\{2\}$ as above. 
Again, direct computation shows that $d_2^-d_1^-=0$. Note 
that this calculation is much easier than the one which 
shows $d_2^+d_1^+=0$. The latter is a direct consequence of 
the former by the observation that all maps involved in the 
definition of $d_2^{\pm} d_1^{\pm}$ are units and counits of 
the two biadjoint functors given by induction and restriction. 
As a matter of fact whenever we 
use a unit in the definition of one we use the corresponding 
counit in the definition of the other. Therefore the fact 
that $d_2^-d_1^-=0$ implies $d_2^+d_1^+=0$.\footnote{We thank Mikhail 
Khovanov for this observation.}

Next we define a complex of bimodules associated to a crossing of a 
strand labelled 1 and a strand labelled 2. To a positive crossing we associate the 
complex 
\begin{equation*}
\labellist
\pinlabel $=$ at 220 112
\pinlabel $q^{2}$ at 285 115
\tiny\hair 2pt
\pinlabel $2$ at -12  16
\pinlabel $1$ at 175  16
\pinlabel $2$ at 313  16
\pinlabel $1$ at 498  16
\pinlabel $1$ at 315 206
\pinlabel $2$ at 498 206
\pinlabel $1$ at 406 125
\pinlabel $2$ at 752  16
\pinlabel $1$ at 754 206
\pinlabel $1$ at 935  16
\pinlabel $2$ at 935 206
\pinlabel $3$ at 863 110
\pinlabel $d^+$ at 605 140
\endlabellist
\centering
\figs{0.23}{posXing12}
\vspace{2ex}
\end{equation*}
and to a negative one 
\begin{equation*}
\labellist
\pinlabel $=$ at 220 112
\pinlabel $q^{-4}$ at 293 115
\pinlabel $q^{-4}$ at 720 115
\tiny\hair 2pt
\pinlabel $2$ at -12  16
\pinlabel $1$ at 175  16
\pinlabel $2$ at 313  16
\pinlabel $1$ at 498  16
\pinlabel $1$ at 315 206
\pinlabel $2$ at 498 206
\pinlabel $3$ at 427 110
\pinlabel $2$ at 752  16
\pinlabel $1$ at 754 206
\pinlabel $1$ at 935  16
\pinlabel $2$ at 935 206
\pinlabel $1$ at 840 125
\pinlabel $d^-$ at 610 140
\small\hair 2pt
\endlabellist
\centering
\figs{0.23}{negXing12}
\vspace{2ex}
\end{equation*}

Again, in both cases, we put the bimodule $\hatdumbell_{21}^{12}$ in the cohomological degree 0.

Note that $\hatHGraph_{21}^{12}=R_{111}\otimes_{21}R_{21}$ and $\hatdumbell_{21}^{12}=
R_{12}\otimes_3 R_{21}$.
We use the intermediate bimodules $\widehat{\Lambda}_1=
R_{111}\otimes_{21}R_{21}\otimes_{3} R_{21}\cong 
R_{111}\otimes_{12}R_{12}\otimes_{3} R_{21}=
\widehat{\Lambda}_2$ (see Figure~\ref{fig:inter-g12}). 
\begin{figure}[ht!]
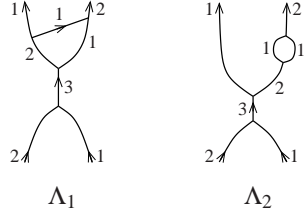

\labellist
\tiny\hair 2pt
\pinlabel $2$ at  -8  12
\pinlabel $1$ at 122  12
\pinlabel $1$ at  -8 235
\pinlabel $2$ at 122 235
\pinlabel $3$ at  74 116
\pinlabel $2$ at  15 166
\pinlabel $1$ at 110 184
\pinlabel $1$ at  60 226
\pinlabel $2$ at 286  12
\pinlabel $1$ at 286 235
\pinlabel $1$ at 414  12
\pinlabel $2$ at 418 235
\pinlabel $3$ at 336  83
\pinlabel $1$ at 372 176
\pinlabel $1$ at 424 172
\pinlabel $2$ at 392 115
\small\hair 2pt
\pinlabel $\Lambda_1$ at   65 -50
\pinlabel $\Lambda_2$ at  360 -50
\endlabellist
\centering
\figs{0.25}{dumbtriangle}\hspace{8.4ex}
\figs{0.25}{dumbdigon-r}\hspace{8.4ex}
\\[1.5ex]
\caption{Intermediate webs $\Lambda_1$ and $\Lambda_2$}
\label{fig:inter-g12}
\end{figure}

Then $d^+$ is the composite of
$$\hatHGraph_{21}^{12}{\xrightarrow{\ \ d^+_1\ \ }}
\widehat{\Lambda}_1\{-4\}
{\xrightarrow{\ \ \cong\ \ }}
\widehat{\Lambda}_2\{-4\}
{\xrightarrow{\ \ d^+_2\ \ }}
\hatdumbell_{21}^{12}\{-2\}$$
with 
\begin{align*}
d^+_1(a\stens b) &= a\stens \Delta_{21}(b)\\
a\stens b\stens c\ &{\xrightarrow{\cong}}\ ab\stens 1\stens c
\end{align*}
and with $d^+_2$ defined by the map $R_{111}\to R_{12}\{2\}$ corresponding 
to the projection onto the second direct summand in the decomposition 
$R_{111}\cong R_{12}\oplus x_2 R_{12}$. It is easy to compute the image of $d^+$ on 
generators of the $R_{21}-R_{12}$ bimodule $R_{21}\otimes R_{111}$: 
$$d^+(1\stens 1)=x_1\stens 1-1\stens x_3\quad\text{and}\quad d^+(x_2\stens 1)=
1\stens x_1x_2-x_2x_3\stens 1.$$ 

Similarly we define $d^-$ as the composite 
$$\hatdumbell_{21}^{12}{\xrightarrow{\ \ d^-_1\ \ }}\widehat
{\Lambda}_2{\xrightarrow
{\ \ \cong\ \ }}\widehat{\Lambda}_1{\xrightarrow{\ \ d^-_2\ \ }}\hatHGraph_{21}^{12}$$
with 
\begin{align*}
d^-_1(a\stens b) &= 1\stens a\stens b\\
a\stens b\stens c &{\xrightarrow{\cong}} ab\stens 1\stens c\\
d^-_2(a\stens b\stens c) &= a\stens bc.
\end{align*}
Note that this yields $d^-(a\stens b)=a\stens b$.

We get similar complexes for the crossings with 1 and 2 
swapped. The pictures can be obtained from the ones above 
by rotation around the $y$-axis and the shifts are the same. 

For a crossing with both strands labelled 1 we use the same complex of bimodules as 
Khovanov in~\cite{Kh}:
\begin{gather*}
\begin{split}
\raisebox{0pt}{
\labellist
\pinlabel $=$ at 225 112
\pinlabel $q^{2}$ at 325 115
\tiny\hair 2pt
\pinlabel $1$ at -12  16
\pinlabel $1$ at 175  16
\pinlabel $1$ at 376  16
\pinlabel $1$ at 560  16
\pinlabel $1$ at 752  16
\pinlabel $1$ at 754 206
\pinlabel $1$ at 935  16
\pinlabel $1$ at 935 206
\pinlabel $2$ at 864 110
\endlabellist
\centering
\figs{0.23}{posXing11}}
\\[1.5ex]
\raisebox{0pt}{
\labellist
\pinlabel $=$ at 225 112
\pinlabel $q^{-2}$ at 315 115
\pinlabel $q^{-2}$ at 710 115
\tiny\hair 2pt
\pinlabel $1$ at -12  16
\pinlabel $1$ at 175  16
\pinlabel $1$ at 375  16
\pinlabel $1$ at 560  16
\pinlabel $1$ at 377 206
\pinlabel $1$ at 560 206
\pinlabel $2$ at 490 110
\pinlabel $1$ at 752  16
\pinlabel $1$ at 935  16
\endlabellist
\centering
\figs{0.23}{negXing11}}
\medskip
\end{split}
\end{gather*}
with $\dumbell_{11}$ in cohomological degree $0$.\\

Let $HHH(D)$ denote the triply graded homology we defined above. 
Then to obtain the 1,2-coloured HOMFLY-PT 
homology $H_{1,2}(D)$ we have to apply some overall shifts. We define
\begin{defn}
\begin{eqnarray*}
H_{1,2}(D)=&
\mspace{-170mu}
HHH(D)<\frac{n^{1}_+-n^{1}_--s_1(D)+2n^{2}_+-2n^{2}_--2s_2(D)}{2}>\\
&\left\{\frac{-n^{1}_++n^{1}_-+s_1(D)-2n^{2}_++2n^{2}_-+2s_2(D)}{2},\frac{-n^{1}_++n^{1}_-+s_1(D)-2n^{2}_++2n^{2}_-+2s_2(D)}{2}\right\}.
\end{eqnarray*}
\end{defn}  
The definitions of $n^{i}_+,n^{i}_-$ and $s_i(D)$ were given in Section~\ref{sec:MOY},  
$<j>$ is an upward shift by $j$ in the homological degree and 
$\left\{k,l\right\}$ denotes an upward shift by $k$ in the Hochschild degree and 
by $l$ in the $q$-degree. \\

Finally in the following two sections we prove the following
\begin{thm}
For a given link $L$, $H_{1,2}(D)$ is independent of the chosen braid diagram $D$ which represents it. 
Hence, $H_{1,2}(L)$ is a link invariant.
\end{thm}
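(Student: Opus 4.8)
The plan is to establish invariance by checking the defining moves for braid-closure link invariants: the braidlike Reidemeister moves II and III (which preserve the closed braid up to ambient isotopy while keeping it a closed braid) and the two Markov moves (stabilization and conjugation), the latter being exactly conjugation of the underlying braid. By Markov's theorem, any two braid diagrams with isotopic closures are related by a finite sequence of these moves, so it suffices to prove $H_{1,2}$ is unchanged under each. Since $H_{1,2}$ is obtained from $HHH(D)$ by fixed overall shifts depending only on the crossing data $n^i_\pm$ and the strand counts $s_i(D)$, and since Hochschild homology is a covariant functor on bimodules, it is enough to prove: (i) the complex of bimodules $C(D)$ attached to the braid is invariant up to homotopy equivalence under braidlike RII and RIII, after accounting for the grading shifts built into $H_{1,2}$; (ii) the Hochschild homology of $C(D)$ is invariant under braid conjugation (because Hochschild homology is a trace, i.e. $HH(M\otimes_R N)\cong HH(N\otimes_R M)$, which immediately gives invariance under cyclic permutation of the tensor factors coming from the crossings); and (iii) the Hochschild homology of $C(D)$ changes in a controlled way under a Markov (de)stabilization move, the change being precisely compensated by the overall shifts, using the computation of $HH$ of the elementary bimodules — this is where axioms A1 and A2 of Section~\ref{sec:MOY}, promised to be proved in Section~\ref{sec:Markov} via Hochschild homology, enter.

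The key steps, in order, are as follows. First, reduce to the three statements (i)--(iii) above via Markov's theorem and functoriality of $HH$. Second, prove homotopy invariance of $C(D)$ under braidlike RII: here one takes the tensor product of the two-term or three-term complexes attached to the two crossings involved, and uses the square and digon decompositions from Section~\ref{sec:triply} — Lemmas~\ref{lem:digon}, \ref{lem:1112-square}, \ref{lem:1122-square}, \ref{lem:2113-square} — to Gaussian-eliminate acyclic subcomplexes, leaving a complex homotopy equivalent to the identity-web complex; one does this case by case according to whether the two strands carry labels $(1,1)$, $(1,2)$, or $(2,2)$. Third, prove homotopy invariance under braidlike RIII: after applying RII-type simplifications one reduces both sides to a common complex, again using the decomposition lemmas to match summands; the $(2,2)$ case is the most involved because the crossing complex has three terms and the relevant webs carry labels up to $4$. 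Fourth, handle conjugation invariance of $HH(C(D))$ — essentially immediate from the trace property of Hochschild homology applied termwise in the complex. Fifth, handle the Markov stabilization move: compute the Hochschild homology of the bimodule complex with an extra crossing closed off, using axioms A1/A2 (the $\unknot_k$ and digon evaluations), and verify that the result equals $HH$ of the un-stabilized complex up to the shift prescribed in the definition of $H_{1,2}$; one must do this for stabilization by a positive and a negative crossing, and for the strand being added carrying label $1$ or $2$ — the factor $(-tq)$ and the exponents in $I(D)$ and in the definition of $H_{1,2}$ are calibrated exactly so that these shifts cancel.

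The main obstacle I expect is the braidlike RIII invariance in the case where all three strands are labelled $2$, together with the positive-crossing direction of all the homotopy-equivalence arguments. For a single $(2,2)$ crossing the complex has three terms involving $\hatsquareup_{3111}$ and webs with edges labelled up to $4$, so the tensor product of two or three such complexes is large, and producing the explicit homotopies requires carefully bookkeeping the maps $\mu$, $\Delta$, the projection maps onto summands in decompositions like $R_{211}\cong R_{22}\oplus x_3 R_{22}$, and the isomorphisms $\widehat{\Psi}\cong\widehat{\Omega}$. The paper's own remark — that $d_2^+ d_1^+ = 0$ follows formally from $d_2^- d_1^- = 0$ because all the maps are units and counits of the biadjunction between induction and restriction — suggests the right strategy: phrase as much as possible in terms of the biadjoint pair (induction, restriction) and their (co)units, so that the positive-crossing computations are forced by the negative-crossing ones, and so that Gaussian elimination can be carried out abstractly rather than by brute-force matrix manipulation. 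A secondary subtlety is ensuring that all homotopy equivalences are through \emph{graded} bimodule maps of the correct degree, so that after applying $HH$ and the overall shifts one genuinely obtains equality of triply-graded homologies and not merely an ungraded isomorphism.
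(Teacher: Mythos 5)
Your overall reduction --- Markov's theorem, then braidlike RII/RIII invariance for the complex of bimodules, then conjugation and stabilization for Hochschild homology, with the shifts in the definition of $H_{1,2}$ cancelling the RI behaviour --- is exactly the paper's skeleton, and your handling of the two Markov moves (trace property of $HH$ for conjugation; explicit computation of $HH$ of the crossing complex for stabilization, calibrated against the overall shift) is essentially what Section~\ref{sec:Markov} does, except that the stabilization argument needs the explicit differentials on the closed-off resolutions, not merely the graph evaluations of A1/A2. Where you diverge sharply from the paper is the RII/RIII step. You propose a direct, case-by-case Gaussian elimination of the tensor product of crossing complexes using the decomposition lemmas of Section~\ref{sec:triply}. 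The paper instead uses a cable reduction trick inspired by~\cite{MOY}: create a digon on each strand labelled $2$ (multiplying the Poincar\'e polynomial by $(1+q^2)$ per such strand), prove \emph{slide lemmas} (Lemma~\ref{lem:slide22} and its $(1,2)$-analogue) showing that sliding the lower vertex of a digon past a crossing is a homotopy equivalence, and observe that after sliding past every crossing one obtains the $2$-cable of the braid with ends zipped, in which all strands are labelled $1$; RII/RIII invariance then follows from~\cite{KR2}. Your direct route would hit an immediate obstacle: the decomposition lemmas of Section~\ref{sec:triply} (Lemmas~\ref{lem:1112-square}, \ref{lem:1122-square}, \ref{lem:2113-square}) are tailored to prove the slide lemmas, and the larger decompositions a direct RII/RIII argument requires --- for instance of the webs produced by composing two copies of the $(3,1,1,1)$ square appearing in the $(2,2)$ crossing complex --- are simply not provided and would have to be established from scratch. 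The trade-off is genuine: the paper's cable trick only yields equality of triply-graded Poincar\'e polynomials, not explicit homotopy equivalences between the complexes (the paper flags this as an obstruction to functoriality under cobordisms), whereas a successfully executed direct route would give honest homotopy equivalences of bimodule complexes, at the cost of substantially more casework and new decomposition lemmas.
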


\section{Invariance under the R2 and R3 moves}
\label{sec:invariance}

The next thing to do is to prove invariance of the 1,2-coloured HOMFLY-PT homology 
under the second and third Reidemeister moves. If the strands involved are all labelled 
1 we already have invariance by Khovanov's~\cite{Kh} and Khovanov and Rozansky's \cite{KR2} results. 
If there are strands involved which are labelled 2, we will use a trick, inspired by the analogous 
trick in~\cite{MOY}, which reduces to the case with all link components labelled 1. The argument is slightly tricky, so let 
us explain the general idea first. 

Suppose we have a braid $B$ with $n$ strands, all labelled 2. Create a digon 
$\digonup_{11}$ on top of each strand. The triply graded polynomial of 
this braid with digons is $(1+q^2)^n$ times the triply graded polynomial of $B$. We will 
prove in Lemma~\ref{lem:slide22} that sliding the lower 
parts of the digons past the crossings 
does not change the triply graded polynomial. After sliding 
this way the lower parts 
of all digons past all crossings (see Figure~\ref{fig:digonsilde}), we obtain a 
braided diagram $B'$ which is 
the 2-cable of $B$ with the two top 
endpoints, and the two bottom endpoints respectively, of each cable zipped  
together. 
\begin{figure}[ht!]
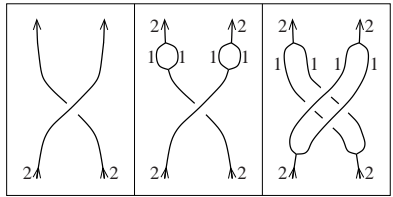

\labellist
\tiny\hair 2pt
\pinlabel $2$   at  32 35
\pinlabel $2$   at 163 35
\pinlabel $2$ at 224  35
\pinlabel $2$ at 356  35
\pinlabel $2$ at 224 257
\pinlabel $2$ at 356 257
\pinlabel $1$ at 219 210
\pinlabel $1$ at 361 210
\pinlabel $1$ at 266 210
\pinlabel $1$ at 313 210
\pinlabel $2$ at 417  35
\pinlabel $2$ at 548  35
\pinlabel $2$ at 417 257
\pinlabel $2$ at 548 257
\pinlabel $1$ at 410 200
\pinlabel $1$ at 465 200
\pinlabel $1$ at 500 200
\pinlabel $1$ at 555 200
\endlabellist
\centering
\figs{0.26}{digonslide}
\caption{Creating and sliding digons}
\label{fig:digonsilde}
\end{figure}
The complex of bimodules associated to $B'$ is the tensor 
product of the HOMFLY-PT complex associated to the 2-cable of $B$ with two complexes, 
one associated to the top endpoints of the cables and one to the bottom endpoints. 
Performing a Reidemeister II or III move on $B$ corresponds to performing a series of  
Reidemeister II and III moves on its 2-cable. By Khovanov and Rozansky's~\cite{KR2} 
results we know that the complex of bimodules associated to the 2-cable is invariant 
up to homotopy equivalence under Reidemeister II and III moves. Therefore the 
complex of bimodules associated to $B'$ is invariant under the Reidemeister II and III 
moves up to homotopy equivalence. Since the triply graded polynomial of $B'$ 
is $(1+q^2)^n$ times the triply graded polynomial of $B$, we see that the latter is 
invariant under the Reidemeister II and III moves as well. 
Note that this method does not give a specific homotopy 
equivalence between the complexes before and after a 
Reidemeister move. It only shows that the corresponding 
Poincar\'{e} polynomials are equal. This would be a problem 
if we wanted to show functoriality under link cobordisms 
of the whole construction. However, even the ordinary 
HOMFLY-PT-homology by Khovanov and Rozansky has not been proven 
to be functorial and probably is not.\\  

Before we prove the 
crucial lemma we have to prove an auxiliary result. 
In the following lemma the top and the bottom of the diagram 
are complexes. The reader can easily check the following auxiliary result. 

\begin{lem}
\label{lem:aux1} 
The diagram below gives a homotopy equivalence between the top and 
the bottom complex.
$$
\xymatrix@R=11.58mm{\\ \cC: \\ \\ \cC': }
\xymatrix@C=28.2mm@R=10mm{ 
&  
B'\oplus B 
\ar@{<-}@/^{1.6pc}/[ddd]|<(0.60)\hole^<(0.4){\bigl(\begin{smallmatrix}h\\0\end{smallmatrix}\bigr) }
\ar@/_{1.6pc}/[ddd]|<(0.594)\hole_<(0.39){(0,-g)}
\ar@<0.1pc>[dl]^{\bigl(\begin{smallmatrix}0 & 0\\ 0 & 1\end{smallmatrix}\bigr)}
\ar@<0.4pc>[dr]^{\bigl(\begin{smallmatrix}-1&u\\ 0& v\end{smallmatrix}\bigr)}
 &  \\
A\oplus B
\ar@{<->}[dd]_{(\id ,0)} \ar[dr]_{(f,g)} 
\ar@<0.4pc>[ur]^{\bigl(\begin{smallmatrix}r & s\\0& 1\end{smallmatrix}\bigr) } 
   &      & 
B'\oplus D
\ar@{<->}[dd]^{(0,\id)}
\ar@<0.1pc>[ul]^{\bigl(\begin{smallmatrix}-1 & 0\\ 0& 0\end{smallmatrix}\bigr)}
 \\             & 
C
\ar[ur]_{\bigl(\begin{smallmatrix}h\\j\end{smallmatrix}\bigr) }   
 &  \\
 A \ar[r]^f        & C\ar[r]^j \ar@{<->}[u]|\id & D
 }$$
\end{lem}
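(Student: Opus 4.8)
The plan is to prove Lemma~\ref{lem:aux1} by exhibiting an explicit Gaussian elimination. Inspecting the diagram, the top complex $\cC$ is a three-term complex
$$A\oplus B\ \longrightarrow\ (B'\oplus B)\ \oplus\ (B'\oplus D)\ \longrightarrow\ C,$$
(with the stated component maps), and the bottom complex $\cC'$ is the two-term complex $A\xrightarrow{\ f\ }C\xrightarrow{\ j\ }D$. The key observation is that two of the summands appearing in the middle term of $\cC$ come in cancelling pairs with acyclic differential: the copy of $B$ in the first summand $B'\oplus B$ is hit isomorphically (via $\bigl(\begin{smallmatrix}0&0\\0&1\end{smallmatrix}\bigr)$) from the $B$ in $A\oplus B$, and one of the two copies of $B'$ in the middle is hit isomorphically (via the $-1$ entries) by the other copy of $B'$ in the middle — these are exactly the arrows drawn with $\id$'s and $-1$'s, and the vertical double arrows $(\id,0)$, $(0,\id)$, $\id$ in the diagram record precisely the identifications that survive after cancellation.

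The steps I would carry out are as follows. First, I would write out all four differentials of $\cC$ as explicit matrices with respect to the displayed direct-sum decompositions, and verify $d^2=0$ in $\cC$ (this is forced by the hypotheses, e.g. $jf=0$ and the compatibility relations among $f,g,h,j,r,s,u,v$, which are themselves consequences of the earlier square decomposition lemmas). Second, I would perform Gaussian elimination twice: once along the isomorphism $B\xrightarrow{\ \id\ }B$ connecting the left term to the $B$-summand of the middle, and once along an isomorphism $B'\xrightarrow{\ \id\ }B'$ inside the middle term. Each elimination removes an acyclic two-term subcomplex and modifies the remaining differentials by the standard ``subtract $b d^{-1} c$'' correction. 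Third, I would check that after these two eliminations the surviving complex is exactly $A\xrightarrow{\ f\ }C\xrightarrow{\ j\ }D$, with the correction terms either vanishing or being absorbed, so that the resulting chain maps are the ones indicated by the vertical arrows, and that they compose to the identity up to the homotopies drawn as the curved arrows $\bigl(\begin{smallmatrix}h\\0\end{smallmatrix}\bigr)$ and $(0,-g)$. Finally, I would record the explicit homotopy equivalence and its inverse read off from the diagram.

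The main obstacle I anticipate is purely bookkeeping: keeping track of signs and of the correction terms produced by the two successive Gaussian eliminations, since the second elimination acts on differentials that have already been modified by the first. One has to be careful that the maps $r,s,u,v$ (which come from the various projections and inclusions in the square decompositions of Section~\ref{sec:triply}) interact correctly — in particular that the composite correction term along the $B'$-cancellation reproduces the map $j\colon C\to D$ rather than something spurious, and that the $B$-cancellation does not introduce an unwanted component into $f\colon A\to C$. Since the lemma is flagged as one the reader can check directly, I expect no conceptual difficulty beyond organizing this computation; the cleanest route is to treat it as a double application of the standard Gaussian-elimination lemma for complexes (as in Bar-Natan's work on local Khovanov homology), rather than constructing the homotopies by hand from scratch.
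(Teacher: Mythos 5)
You have the right method in mind (double Gaussian elimination, verifying that the diagram's maps and homotopies are exactly those produced by the standard cancellation lemma), and since the paper leaves the verification entirely to the reader this is indeed what one is supposed to do. However, you have misread the shape of the top complex $\cC$, and the plan does not go through as written. Reading the xy-diagram, the complex $\cC$ is
\[
A\oplus B \;\xrightarrow{\ \left(\begin{smallmatrix}r & s\\0& 1\\ f & g\end{smallmatrix}\right)\ }\; (B'\oplus B)\oplus C \;\xrightarrow{\ \left(\begin{smallmatrix}-1&u&h\\ 0& v & j\end{smallmatrix}\right)\ }\; B'\oplus D,
\]
with $(B'\oplus B)\oplus C$ in the middle degree and $B'\oplus D$ in the last, not $A\oplus B \to (B'\oplus B)\oplus(B'\oplus D) \to C$ as you write. (The vertical double arrows $(\id,0)$, $\id$, $(0,\id)$ already tell you this: they project $A\oplus B$ onto $A$, the middle onto $C$, and $B'\oplus D$ onto $D$, matching the terms of $\cC'$ in the same degrees. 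With your reading the last term of $\cC$ would have to map to $D$, but no arrow $C\to D$ appears inside $\cC$, and worse, the diagram gives no differential component landing in your supposed last term $C$ from your supposed middle term.)

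This matters for the second cancellation. You want to kill ``the two copies of $B'$ in the middle'' along the $-1$ entry, but Gaussian elimination cancels a pair connected by an isomorphism that is a \emph{component of the differential}, i.e. a pair living in \emph{adjacent} degrees; two $B'$ summands sitting in the same degree cannot be cancelled this way. With the correct reading the $-1:B'\to B'$ is the component of $d_1$ from degree $1$ to degree $2$, so the elimination is legitimate. Concretely: first cancel the $B$ in $A\oplus B$ against the $B$ in $B'\oplus B$ along the $(2,2)$-entry $1$ of $\left(\begin{smallmatrix}r & s\\0& 1\end{smallmatrix}\right)$ (note this is the differential, not the matrix $\left(\begin{smallmatrix}0&0\\0&1\end{smallmatrix}\right)$, which is the homotopy going the other way); the correction term $b\phi^{-1}c$ vanishes because the $A\to B$ component is zero, leaving $A\xrightarrow{\left(\begin{smallmatrix}r\\ f\end{smallmatrix}\right)} B'\oplus C \xrightarrow{\left(\begin{smallmatrix}-1&h\\ 0&j\end{smallmatrix}\right)} B'\oplus D$. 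Then cancel the $B'$ in degree $1$ against the $B'$ in degree $2$ along $-1$; again the correction $j - h(-1)^{-1}\cdot 0 = j$ produces no spurious terms, and restricting the remaining differentials gives exactly $A\xrightarrow{f}C\xrightarrow{j}D=\cC'$. The chain maps and homotopies recorded in the diagram are then the standard ones from the two eliminations. Once you fix the identification of the degrees, your argument is the intended one.
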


Using the lemma above we can now prove the following crucial lemma.

\begin{lem}
\label{lem:slide22}
We have the homotopy equivalence
\begin{equation}
\label{eq:slide22}
\labellist
\pinlabel $\cong$ at 264 110
\tiny\hair 2pt
\pinlabel $2$ at -8 14
\pinlabel $1$ at -7 177
\pinlabel $1$ at 70 204
\pinlabel $2$ at 184 14
\pinlabel $2$ at 339 14
\pinlabel $1$ at 341 187
\pinlabel $1$ at 434 214
\pinlabel $2$ at 542 14
\endlabellist
\raisebox{-16pt}{\figs{0.2}{slide1}\qquad\quad \figs{0.2}{slide2}}
\end{equation}
\end{lem}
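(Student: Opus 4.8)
The plan is to write both sides of \eqref{eq:slide22} as explicit complexes of bimodules and then to produce the stated homotopy equivalence by matching them to the two complexes of Lemma~\ref{lem:aux1}. On the left, the complex is obtained by gluing the bottom half of a digon — the web given by a single edge labelled $2$ that splits upward into two edges labelled $1$ — onto one strand at the top of the three-term complex attached to a crossing of two strands labelled $2$ (Figures~\ref{fig:poscross} and \ref{fig:negcross}), which at the bimodule level produces a complex of $R_{112}$-$R_{22}$-bimodules. On the right, the same half-digon is glued below the crossing, so that the two parallel $1$-labelled edges cross the remaining $2$-labelled strand; this exhibits the right-hand side as (a shift of) the composite of two copies of the $1,2$-crossing complex of Section~\ref{sec:HOMFLY-PT}, i.e. one strand of the original crossing has been replaced by its $2$-cable. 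This first step is purely bookkeeping with the definitions of Sections~\ref{sec:triply} and \ref{sec:HOMFLY-PT}; I would record all $q$- and homological-degree shifts at this point.

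Next I would simplify both complexes using the direct-sum decompositions already established. Once the half-digon is attached, the middle term $\hatsquareup_{3111}$ of the $2,2$-crossing complex decomposes into a direct sum of dumbbell-, arc- and $H$-type bimodules by Lemmas~\ref{lem:digon}, \ref{lem:1112-square}, \ref{lem:1122-square} and \ref{lem:2113-square}, and the same lemmas applied to the two $1,2$-crossing complexes on the right yield the same list of summands. Written out through the isomorphisms $\widehat{\Psi}\cong\widehat{\Omega}$ and their $1,2$-analogues, the differentials and the chain maps and homotopies witnessing the equivalence become composites of the multiplication and comultiplication maps of Definitions~\ref{defn:1k-mucomu} and \ref{defn:22-mucomu} with the projection and inclusion maps appearing in the proofs of the square lemmas. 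After this rewriting the left-hand complex takes the shape of the complex $\cC$ of Lemma~\ref{lem:aux1}, with $A$, $C$, $D$ the surviving summands and $B$, $B'$ the summands that are paired off by an identity map, so that Lemma~\ref{lem:aux1} identifies it with the smaller complex $\cC'$ — which one then recognises as the right-hand complex of \eqref{eq:slide22}. This is in essence a Gaussian-elimination argument, with Lemma~\ref{lem:aux1} packaging the particular cancellation pattern.

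The step I expect to be the main obstacle is not conceptual but computational: verifying, by the somewhat lengthy identities for Schur polynomials and for $\Delta_{22}$ and $\Delta_{1k}$, that the differentials of the expanded complex genuinely satisfy the hypotheses of Lemma~\ref{lem:aux1} — in particular that the maps there denoted $u,v,r,s$ can be chosen so that the required squares commute up to the prescribed homotopies — and checking that every grading shift matches on the nose. The real work is in locating the correct contractible summand inside the glued square $\hatsquareup_{3111}$ and following it through the identifications of the intermediate webs. Once \eqref{eq:slide22} is established for a positive crossing, the negative-crossing case follows either by repeating the computation or, more economically, from the biadjointness observation used to deduce $d_2^+d_1^+=0$ from $d_2^-d_1^-=0$; and the version with the half-digon on the other strand of the crossing follows from the rotation-about-the-$y$-axis symmetry already noted in Section~\ref{sec:HOMFLY-PT}.
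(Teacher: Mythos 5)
Your approach is essentially the paper's: write out both sides of \eqref{eq:slide22} as complexes of bimodules, use the digon and square decomposition lemmas to split the terms of the larger complex into direct summands, and package the resulting cancellation as an instance of Lemma~\ref{lem:aux1}. The identification of the remaining verification — checking by hand that the differentials and chain maps land in the summands as required by Lemma~\ref{lem:aux1} — as the genuine work is also accurate; the paper lists six such assertions and verifies each on a small set of generators.

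Two corrections on the details. First, the roles of the two sides are swapped in your second paragraph: the complex that is decomposed into the $A\oplus B$, $(B'\oplus B)\oplus C$, $B'\oplus D$ shape of Lemma~\ref{lem:aux1} is the \emph{right}-hand side of \eqref{eq:slide22}, i.e.\ the four-node diamond $\cC$ obtained from the two stacked $1,2$-crossings, as you yourself observe in your first paragraph. The left-hand side is the three-term complex $\cC'\colon A\to C\to D$ and is \emph{not} decomposed further — the paper reads off $A$, $C$, $D$ from it directly, so the "decompose both sides and match the summands" step you sketch is not what happens and would not quite align with the statement of Lemma~\ref{lem:aux1} anyway. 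Second, the decompositions used are only those of Lemma~\ref{lem:digon}, Lemma~\ref{lem:1112-square} and Lemma~\ref{lem:2113-square}; Lemma~\ref{lem:1122-square} does not enter this particular proof. Your observation that the negative-crossing analogue can be deduced from the positive one by biadjointness (rather than by redoing the computation) is a useful economy that the paper does not spell out — it simply asserts the analogous homotopy equivalences without proof.
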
  

\begin{proof} Note that the complex of bimodules $\cC$ associated the r.h.s. of 
Equation~\eqref{eq:slide22} is given by 
\begin{equation*}
\labellist
\pinlabel $\cC\colon$ at -180 340
\pinlabel $q^{2}$ at 460 120
\pinlabel $q^{2}$ at 460 560
\pinlabel $q^{4}$ at -50 340
\tiny\hair 2pt
\pinlabel $2$ at  -6 246
\pinlabel $1$ at  -6 438
\pinlabel $2$ at 188 246
\pinlabel $2$ at 187 438
\pinlabel $1$ at  64 438
\pinlabel $1$ at  60 338
\pinlabel $2$ at  85 368
\pinlabel $1$ at 122 397
\pinlabel $1$ at 100 289
\pinlabel $1$ at 159 330
\pinlabel $2$ at 490 473
\pinlabel $1$ at 492 665
\pinlabel $2$ at 684 473
\pinlabel $2$ at 684 665
\pinlabel $1$ at 560 665
\pinlabel $1$ at 562 562
\pinlabel $1$ at 656 542
\pinlabel $2$ at 610 588
\pinlabel $3$ at 660 595
\pinlabel $1$ at 600 517
\pinlabel $2$ at 490  15
\pinlabel $1$ at 492 205
\pinlabel $2$ at 688  15
\pinlabel $2$ at 687 205
\pinlabel $1$ at 564 205
\pinlabel $3$ at 548 102
\pinlabel $1$ at 660 102
\pinlabel $1$ at 622 176
\pinlabel $2$ at 574 148
\pinlabel $1$ at 596  44
\pinlabel $2$ at  987 244
\pinlabel $1$ at  989 435
\pinlabel $2$ at 1184 244
\pinlabel $2$ at 1184 435
\pinlabel $1$ at 1060 435
\pinlabel $3$ at 1048 315
\pinlabel $1$ at 1158 317
\pinlabel $3$ at 1166 372
\pinlabel $2$ at 1100 368
\pinlabel $1$ at 1100 265
\endlabellist
\centering
\figs{0.22}{slide2cplx}
\end{equation*}
The complex $\cC'$ associated to the l.h.s. is given by 
\begin{equation*}
\labellist
\pinlabel $\cC'\colon$ at -180 114
\pinlabel $q^6$ at 000 114
\pinlabel $q^{4}$ at 460 114
\tiny\hair 2pt
\pinlabel $2$ at  -8  14
\pinlabel $1$ at  -7 205
\pinlabel $2$ at 186  14
\pinlabel $1$ at  88 205
\pinlabel $2$ at 494  14
\pinlabel $1$ at 497 205
\pinlabel $2$ at 689  14
\pinlabel $2$ at 689 205
\pinlabel $1$ at 586 205
\pinlabel $3$ at 550  98
\pinlabel $1$ at 660  98
\pinlabel $1$ at 606 148
\pinlabel $1$ at 596  48
\pinlabel $2$ at 541 138
\pinlabel $2$ at  991  14
\pinlabel $1$ at  994 205
\pinlabel $2$ at 1184  14
\pinlabel $2$ at 1184 205
\pinlabel $1$ at 1084 205
\pinlabel $4$ at 1072  85
\pinlabel $2$ at 1080 148
\endlabellist
\centering
\figs{0.22}{slide1cplx}
\end{equation*}

\n By Lemma~\ref{lem:1112-square} we have 
\begin{equation*}
\labellist 
\pinlabel $\cong\  q^2$ at 260 110
\pinlabel $\oplus$ at 560 110
\pinlabel $A$ at 400 -25
\pinlabel $B$ at 720 -25
\tiny\hair 2pt
\pinlabel $2$ at  -8  14
\pinlabel $1$ at  -8 205
\pinlabel $2$ at 186  14
\pinlabel $2$ at 186 205
\pinlabel $1$ at  88 205
\pinlabel $1$ at  58 105
\pinlabel $2$ at  84 133
\pinlabel $1$ at 100  59
\pinlabel $1$ at 160 108
\pinlabel $1$ at 124 167
\pinlabel $2$ at 311  14
\pinlabel $1$ at 311 205
\pinlabel $2$ at 504  14
\pinlabel $1$ at 406 205
\pinlabel $2$ at 626  14
\pinlabel $1$ at 626 205
\pinlabel $2$ at 821  14
\pinlabel $2$ at 821 205
\pinlabel $1$ at 696 205
\pinlabel $1$ at 719  65
\pinlabel $3$ at 783 118
\endlabellist
\figs{0.22}{triangle5legs}\mspace{50mu}
\figs{0.22}{halfdumb-l}\mspace{50mu}
\figs{0.22}{hdumb-r}\medskip
\end{equation*}

\n By Lemma~\ref{lem:digon} we have 
\begin{equation*}
\labellist 
\pinlabel $\cong$ at 248 110
\pinlabel $\cong$ at 560 110
\pinlabel $\oplus\ q^2$ at 890 110
\pinlabel $B'$ at 730 -25
\pinlabel $B$ at 1040 -25
\tiny\hair 2pt
\pinlabel $2$ at  -7  14
\pinlabel $1$ at  -7 205
\pinlabel $2$ at 188  14
\pinlabel $2$ at 188 205
\pinlabel $1$ at  65 205
\pinlabel $1$ at  68 100
\pinlabel $2$ at 110 123
\pinlabel $1$ at 108  52
\pinlabel $1$ at 157  80
\pinlabel $3$ at 162 136
\pinlabel $2$ at 315  14
\pinlabel $1$ at 315 205
\pinlabel $2$ at 513  14
\pinlabel $2$ at 510 205
\pinlabel $1$ at 390 205
\pinlabel $1$ at 398 122
\pinlabel $2$ at 464  96
\pinlabel $1$ at 464  50
\pinlabel $1$ at 512  62
\pinlabel $3$ at 486 138
\pinlabel $2$ at 638  14
\pinlabel $1$ at 638 205
\pinlabel $2$ at 835  14
\pinlabel $2$ at 833 205
\pinlabel $1$ at 710 205
\pinlabel $1$ at 731  65
\pinlabel $3$ at 796 118
\pinlabel $2$ at  959  14
\pinlabel $1$ at  961 205
\pinlabel $2$ at 1158  14
\pinlabel $2$ at 1156 205
\pinlabel $1$ at 1031 205
\pinlabel $1$ at 1051  65
\pinlabel $3$ at 1117 118
\endlabellist
\figs{0.22}{triangle5legsdumb}\mspace{50mu}
\figs{0.22}{hdumb-dig}\mspace{50mu}
\figs{0.22}{hdumb-r}\mspace{50mu}
\figs{0.22}{hdumb-r}\medskip
\end{equation*}

\n By Lemma~\ref{lem:2113-square} we have 
\begin{equation*}
\labellist 
\pinlabel $\cong$ at 260 110
\pinlabel $\oplus\ q^2$ at 574 110
\pinlabel $B'$ at 400 -25
\pinlabel $D$ at 720 -25
\tiny\hair 2pt
\pinlabel $2$ at  -8  14
\pinlabel $1$ at  -8 205
\pinlabel $2$ at 186  14
\pinlabel $2$ at 186 205
\pinlabel $1$ at  64 205
\pinlabel $1$ at  58  85
\pinlabel $2$ at 100 133
\pinlabel $1$ at 100  38
\pinlabel $1$ at 160  85
\pinlabel $3$ at 170 144
\pinlabel $2$ at 309  14
\pinlabel $1$ at 311 205
\pinlabel $2$ at 502  14
\pinlabel $2$ at 502 205
\pinlabel $1$ at 400 205
\pinlabel $3$ at 440 126
\pinlabel $4$ at 430  84
\pinlabel $2$ at 628  14
\pinlabel $1$ at 628 205
\pinlabel $2$ at 821  14
\pinlabel $2$ at 821 205
\pinlabel $1$ at 698 205
\pinlabel $1$ at 721  65
\pinlabel $3$ at 784 118
\endlabellist
\figs{0.22}{sqdumb}\mspace{50mu}
\figs{0.22}{dumbdumb-rot}\mspace{50mu}
\figs{0.22}{hdumb-r}\medskip
\end{equation*}

\n Note that
\begin{equation*}
\labellist
\pinlabel $\cong$ at 250 100
\tiny\hair 2pt
\pinlabel $4$ at  64  16
\pinlabel $1$ at  -8 200
\pinlabel $2$ at 190 200
\pinlabel $1$ at  86 200
\pinlabel $2$ at  48  86
\pinlabel $4$ at  400  16
\pinlabel $1$ at  318 200
\pinlabel $2$ at  516 200
\pinlabel $1$ at  420 200
\pinlabel $3$ at  464  86
\endlabellist
\figs{0.16}{vertexleft}\mspace{40mu}\figs{0.16}{vertexright}
\end{equation*}

\n Finally apply Lemma~\ref{lem:aux1}, which is justified because  
\begin{enumerate}
\item $d\colon A\to B$ is zero, 
\item $d\colon B'\to D$ is zero,
\item $d\colon B\to B$ is the identity,
\item $d\colon B'\to B'$ is minus the identity,  \\
\item 
$
\labellist
\pinlabel $q^2$ at -15 110
\pinlabel $\xra{\mspace{30mu}}$ at 240 110
\pinlabel $\xra{\mspace{14mu} d \mspace{8mu}}$ at 620 122
\pinlabel $q^{-2}$ at 745 110
\pinlabel $\xra{\mspace{30mu}}$ at 1020 110
\pinlabel $q^{-2}$ at 1135 110
\tiny\hair 2pt
\pinlabel $2$ at  18  16
\pinlabel $1$ at  -7 202
\pinlabel $1$ at  86 202
\pinlabel $2$ at 112  16
\pinlabel $2$ at 350  16
\pinlabel $1$ at 352 202
\pinlabel $2$ at 553  16
\pinlabel $2$ at 550 202
\pinlabel $1$ at 420 202
\pinlabel $1$ at 418 105
\pinlabel $2$ at 445 133
\pinlabel $1$ at 480 165
\pinlabel $1$ at 525 115
\pinlabel $1$ at 460  60
\pinlabel $2$ at 754  16
\pinlabel $1$ at 758 202
\pinlabel $2$ at 957  16
\pinlabel $2$ at 952 202
\pinlabel $1$ at 827 202
\pinlabel $3$ at 812 102
\pinlabel $1$ at 840 148
\pinlabel $1$ at 888 174
\pinlabel $1$ at 928 108
\pinlabel $1$ at 860  46
\pinlabel $2$ at 1160  16
\pinlabel $1$ at 1164 202
\pinlabel $2$ at 1364  16
\pinlabel $2$ at 1359 202
\pinlabel $1$ at 1258 202
\pinlabel $3$ at 1225  85
\pinlabel $2$ at 1238 131
\pinlabel $1$ at 1278 138
\pinlabel $1$ at 1336  85
\pinlabel $1$ at 1273  37
\endlabellist
\centering
\mspace{10mu}
\raisebox{-17pt}{
\figs{0.205}{halfdumb}
\hspace{8ex}
\figs{0.20}{triangle5legs}
\hspace{8ex}
\figs{0.20}{pentagon5legs}
\hspace{8ex}
\figs{0.20}{sqdumb-l}
\qquad}
$\quad
\\[2ex]%
\hspace*{8ex} 
$
\labellist
\pinlabel $=$ at -90 102
\pinlabel $q^2$ at -15 110
\pinlabel $\xra{\mspace{14mu} d \mspace{8mu}}$ at 250 122
\pinlabel $q^{-2}$ at 390 110
\tiny\hair 2pt
\pinlabel $2$ at 18 16
\pinlabel $1$ at  -7 202
\pinlabel $1$ at  86 202
\pinlabel $2$ at 112  16
\pinlabel $2$ at 400  16
\pinlabel $1$ at 404 202
\pinlabel $2$ at 600  16
\pinlabel $2$ at 598 202
\pinlabel $1$ at 498 202
\pinlabel $3$ at 465  85
\pinlabel $2$ at 478 131
\pinlabel $1$ at 518 138
\pinlabel $1$ at 576  85
\pinlabel $1$ at 513  37
\endlabellist
\centering
\mspace{40mu}
\raisebox{-17pt}{
\figs{0.205}{halfdumb}
\hspace{10ex}
\figs{0.20}{sqdumb-l}
}\ ,
$
\\ \llap{\hspace*{-6ex}and}\\
\item 
$
\labellist
\pinlabel $q^{-2}$ at -45 110
\pinlabel $\xra{\mspace{30mu}}$ at 250 110
\pinlabel $q^{-2}$ at 365 110
\pinlabel $\xra{\mspace{14mu} d \mspace{8mu}}$ at 655 122
\pinlabel $q^{-4}$ at 770 110
\pinlabel $\xra{\mspace{30mu}}$ at 1070 110
\pinlabel $q^{-4}$ at 1180 110
\tiny\hair 2pt
\pinlabel $2$ at  -9  16
\pinlabel $1$ at  -7 202
\pinlabel $1$ at  86 202
\pinlabel $2$ at 190  16
\pinlabel $2$ at 188 202
\pinlabel $3$ at  54  86
\pinlabel $2$ at  68 130
\pinlabel $1$ at 108 140
\pinlabel $1$ at 164  86
\pinlabel $1$ at 102  36
\pinlabel $2$ at 390  16
\pinlabel $1$ at 394 202
\pinlabel $2$ at 593  16
\pinlabel $2$ at 590 202
\pinlabel $1$ at 470 202
\pinlabel $3$ at 453 100
\pinlabel $2$ at 478 150
\pinlabel $1$ at 528 176
\pinlabel $1$ at 565 105
\pinlabel $1$ at 500  44
\pinlabel $2$ at 796  16
\pinlabel $1$ at 800 202
\pinlabel $2$ at 997  16
\pinlabel $2$ at 997 202
\pinlabel $1$ at 869 202
\pinlabel $3$ at 858  87
\pinlabel $2$ at 905 138
\pinlabel $3$ at 978 144
\pinlabel $1$ at 970  82
\pinlabel $1$ at 910  36
\pinlabel $2$ at 1200  16
\pinlabel $1$ at 1206 205
\pinlabel $2$ at 1404  16
\pinlabel $2$ at 1400 205
\pinlabel $1$ at 1296 205
\pinlabel $4$ at 1278  90
\pinlabel $2$ at 1294 150
\endlabellist
\centering
\mspace{25mu}
\raisebox{-17pt}{
\figs{0.205}{sqdumb-l}
\hspace{8ex}
\figs{0.20}{pentagon5legs}
\hspace{8ex}
\figs{0.20}{sqdumb}
\hspace{8ex}
\figs{0.20}{dumbdumb}
\qquad}
$\quad
\\[2ex]%
\hspace*{8ex} 
$
\labellist
\pinlabel $=$ at -100 102
\pinlabel $q^{-2}$ at -25 110
\pinlabel $\xra{\mspace{14mu} d \mspace{8mu}}$ at 250 122
\pinlabel $q^{-4}$ at 390 110
\tiny\hair 2pt
\pinlabel $2$ at  -9  16
\pinlabel $1$ at  -7 202
\pinlabel $1$ at  86 202
\pinlabel $2$ at 190  16
\pinlabel $2$ at 188 202
\pinlabel $3$ at  54  86
\pinlabel $2$ at  68 130
\pinlabel $1$ at 108 140
\pinlabel $1$ at 164  86
\pinlabel $1$ at 102  36
\pinlabel $2$ at 440  16
\pinlabel $1$ at 446 205
\pinlabel $2$ at 644  16
\pinlabel $2$ at 640 205
\pinlabel $1$ at 538 205
\pinlabel $4$ at 518  90
\pinlabel $2$ at 536 150
\endlabellist
\centering
\mspace{50mu}
\raisebox{-17pt}{
\figs{0.205}{sqdumb-l}
\hspace{10ex}
\figs{0.20}{dumbdumb}
}\ .$
\end{enumerate}
\medskip

\n All assertions follow from straightforward computations and can easily be checked. 
For the first assertion one only has to 
compute the image of $1\stens 1$, which is zero indeed. For the second, third and 
fourth it suffices to compute the images of $1\stens 1\stens 1$ and $1\stens x_1\stens 1$. 
For the fifth it suffices to compare the images of $1\stens 1$. For the sixth one has 
to compare the images of $1\stens 1\stens 1\stens 1$, $1\stens 1\stens x_3\stens 1$ and 
$x_3\stens 1\stens 1\stens 1$. 
\end{proof}        
Of course there are also homotopy equivalences analogues 
to the one in Lemma~\ref{lem:slide22} for a 
negative crossing or a $11$-splitting of the right top 
strand. \\

In the following lemma the top and bottom parts are 
complexes again.    

\begin{lem}
\label{lem:aux2} 
If $sf=0$, the diagram below defines a homotopy 
equivalence between the top and the bottom complex.
$$
\xymatrix@C=28.2mm@R=10mm{ 
&  
\ A\oplus A'
\ar@{<-}@/^{1.6pc}/[ddd]|<(0.60)\hole^<(0.42){\bigl(\begin{smallmatrix}0\\ f \end{smallmatrix}\bigr)}
\ar@/_{1.6pc}/[ddd]|<(0.594)\hole_<(0.39){(-h,0)}
\ar@<0.1pc>[dl]^{(1,0)}
\ar@<0.4pc>[dr]^{\bigl(\begin{smallmatrix}r&s\\ u&-1\end{smallmatrix}\bigr)}
 &  \\
\mspace{-50mu}\cD\colon\mspace{28mu} A
\ar@{<->}[dd]_{0} \ar[dr]_{h} 
\ar@<0.4pc>[ur]^{\bigl(\begin{smallmatrix}1\\g\end{smallmatrix}\bigr) } 
   &      & 
C\oplus A'
\ar@<0.1pc>[dd]^{(\id,-s)}
\ar@<0.1pc>[ul]^{\bigl(\begin{smallmatrix}0&0\\0&-1\end{smallmatrix}\bigr)}
 \\             & 
B
\ar[ur]_{\bigl(\begin{smallmatrix}j\\ f\end{smallmatrix}\bigr) }   
 &  \\
\mspace{-50mu}\cD'\colon\mspace{24mu} 0\ \ar[r]        & 
B
\ar[r]^j \ar@{<->}[u]|\id   & 
C
\ar@<0.3pc>[uu]^{(\id,0)}
 }$$
\end{lem}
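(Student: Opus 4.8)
The plan is to read the displayed diagram as the full data of a homotopy equivalence — a chain map $\cD\to\cD'$, a chain map $\cD'\to\cD$, and a chain homotopy — and then to verify the defining identities directly, exactly as one does for Lemma~\ref{lem:aux1}. Conceptually the picture is an instance of Gaussian elimination (cancellation of a contractible direct summand of a complex): the differentials of the top complex $\cD$ contain invertible blocks, namely a $\pm\id$ between two copies of $A'$ and an $\id$ between two copies of $A$, and cancelling the two acyclic summands they produce turns $\cD$ into $\cD'$; the two chain maps and the homotopy written in the diagram are precisely the maps and contracting homotopy manufactured by that double cancellation. So it would also be legitimate (and perhaps cleaner) to deduce the lemma from the general Gaussian-elimination lemma, but since all the maps are already spelled out I would just check them.

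Here is the verification I would carry out, in order. First, that the two displayed maps are chain maps: every square commutes for a trivial reason — a composite is literally an identity, or literally zero, or it vanishes by one of the complex identities built into $\cD$ — with the single exception of the square in the top cohomological degree, where $(\id,-s)\circ\bigl(\begin{smallmatrix}j\\ f\end{smallmatrix}\bigr)=j-sf$ has to be compared with $j$; this is the one and only place the hypothesis $sf=0$ is used. Second, that the composite $\cD'\to\cD\to\cD'$ equals $\id_{\cD'}$ on the nose: since the extra objects $A$ and $A'$ do not occur in $\cD'$, this composite is the identity in each degree — the $B$-entry is literally $\id_B$ and the $C$-entry is $(\id,-s)\circ\bigl(\begin{smallmatrix}\id\\ 0\end{smallmatrix}\bigr)=\id_C$ — so no homotopy is needed in this direction. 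Third, that the composite $\cD\to\cD'\to\cD$ is chain homotopic to $\id_{\cD}$ via the displayed homotopy $H$ (the long curved arrows together with the short diagonal ``return'' arrows): here one writes the identity $\id-(\text{composite})=dH+Hd$ degree by degree and checks it against the explicit block forms, and the entries $r,s,u,g$ occurring in the diagram are exactly what is needed to make it hold. This reduces to a handful of $2\times2$- and $1\times2$-matrix computations, just as in Lemma~\ref{lem:aux1}.

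The hard part is not conceptual but organizational: keeping the block-matrix bookkeeping and the signs straight in the homotopy identity of the third step, and making sure that $sf=0$ is invoked exactly at the top square of the first step and is not secretly needed anywhere else. Once the diagram has been transcribed into formulas the whole thing is a routine, if slightly tedious, check, which is why — as with Lemma~\ref{lem:aux1} — it can be left to a direct computation.
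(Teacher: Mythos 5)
Your plan coincides with the paper's own proof, which likewise declares all maps visible in the figure and leaves the routine block-matrix verification to the reader; the only extra hints the paper supplies are the two identities $-jh=r+sg$ and $fh=g-u$, which follow from $d^2=0$ in the top complex $\cD$ and are exactly what your third step would produce when checking the homotopy identity $\id-\psi\phi=dH+Hd$. Your Gaussian-elimination framing (cancelling the invertible $A\to A$ and $A'\to A'$ blocks) is an accurate conceptual description of what the explicitly displayed maps accomplish, and your observation that $sf=0$ enters precisely to make $(\id,-s)\circ\bigl(\begin{smallmatrix}j\\ f\end{smallmatrix}\bigr)=j$ in the top square is on target.
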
       
\proof 

All maps are indicated in the figure, so the reader can check everything 
easily. Two identities are helpful: since the top of the figure is a complex, we 
have
$$-j h=r+s g\quad\mbox{and}\quad f h=g-u.
\rlap{\hspace*{22.3ex}\qed}
$$ 

\begin{lem}
We have the homotopy equivalence
\begin{equation}
\label{eq:slide12}
\labellist
\pinlabel $\cong$ at 264 110
\tiny\hair 2pt
\pinlabel $1$ at -8 14
\pinlabel $1$ at -7 177
\pinlabel $1$ at 70 204
\pinlabel $2$ at 186 14
\pinlabel $1$ at 339 14
\pinlabel $1$ at 339 187
\pinlabel $1$ at 431 214
\pinlabel $2$ at 544 14
\endlabellist
\centering
\raisebox{-16pt}{\figs{0.2}{slide1}\qquad\quad \figs{0.2}{slide2}}
\end{equation}
\end{lem}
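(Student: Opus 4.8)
The plan is to mimic the proof of Lemma~\ref{lem:slide22}: write out the complexes of bimodules on the two sides of~\eqref{eq:slide12}, decompose the terms on one side by the decomposition lemmas already proved, and cancel the resulting acyclic summand by Gaussian elimination. The only difference with Lemma~\ref{lem:slide22} is that the crossing occurring here is between a strand labelled $1$ and a strand labelled $2$, so its complex has only \emph{two} terms, and consequently the relevant auxiliary result is the simpler Lemma~\ref{lem:aux2} rather than Lemma~\ref{lem:aux1}.

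First I would spell out the two-term complex $\cC$ of bimodules associated to the right-hand side of~\eqref{eq:slide12}. It is obtained from the complex $\hatHGraph_{21}^{12}\{2\}\xrightarrow{\,d\,}\hatdumbell_{21}^{12}$ of a $12$-crossing by creating a $\digonup_{11}$ on the strand labelled $2$ in each of its two resolutions, so $\cC$ has the shape $\widehat\Gamma^{0}\{2\}\xrightarrow{\,d\,}\widehat\Gamma^{1}$, where $\widehat\Gamma^{0}$ is the bimodule of the H-web carrying a digon and $\widehat\Gamma^{1}$ that of the dumbell-web carrying a digon. Straight from the definition of the bimodule of a web, $\widehat\Gamma^{0}$ is of the form $R_{111}\otimes_{12}R_{111}\otimes_{21}R_{21}$, i.e.\ up to a reflection and a grading shift it is $\hatsquareup_{1112}$, so Lemma~\ref{lem:1112-square} gives $\widehat\Gamma^{0}\cong\hatdumbell_{21}^{12}\oplus\hattwoarcs_{21}\{2\}$, whereas $\widehat\Gamma^{1}$ is a digon sitting on a leg of a dumbell, so Lemma~\ref{lem:digon} gives $\widehat\Gamma^{1}\cong\hatdumbell_{21}^{12}\oplus\hatdumbell_{21}^{12}\{2\}$. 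In the same way I would write out the two-term complex $\cC'$ for the left-hand side, where the digon has been slid through the crossing.

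After these substitutions $\cC$ becomes a two-step complex each of whose entries is a direct sum of the elementary bimodules $\hatHGraph_{21}^{12}$, $\hatdumbell_{21}^{12}$ and $\hattwoarcs_{21}$ with appropriate $q$-shifts, and its induced differential can be evaluated on the generators $1\stens 1$ and $1\stens x_i\stens 1$ using the already-recorded formulas $d^{+}(1\stens 1)=x_1\stens 1-1\stens x_3$, $d^{+}(x_2\stens 1)=1\stens x_1x_2-x_2x_3\stens 1$ (resp.\ $d^{-}(a\stens b)=a\stens b$) together with the projections $R_{111}\cong R_{12}\oplus x_2R_{12}$ and $R_{111}\cong R_{21}\oplus x_2R_{21}$. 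One of the resulting summands is an acyclic two-term piece $X\xrightarrow{\ \cong\ }X$, and cancelling it is exactly the situation of Lemma~\ref{lem:aux2}: one reads off which summands play the roles of $A,A',B,C$, verifies the hypothesis $sf=0$ and the requirement that the remaining component differentials be the identity resp.\ zero (a short computation on the two generators above), and finds that the quotient complex is $\cC'$. Exactly as for Lemma~\ref{lem:slide22}, there are analogous homotopy equivalences for a negative crossing and for the diagram with the labels $1$ and $2$ interchanged.

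I expect the main obstacle to be bookkeeping rather than ideas: carrying the grading shifts correctly through the two decomposition lemmas and matching the summands with the data $(A,A',B,C,f,g,h,j,r,s,u)$ of Lemma~\ref{lem:aux2}, where a single wrong shift or sign invalidates the identification. Note moreover that, unlike in the $22$-case where $d_2^{+}d_1^{+}=0$ could be deduced formally from $d_2^{-}d_1^{-}=0$ because all maps there were units and counits of the induction/restriction adjunctions, here the crossing differential $d^{\pm}$ is not of that purely adjunction-theoretic form, so the needed identities must all be checked by the explicit computations indicated above.
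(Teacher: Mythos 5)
Your proposal has the overall strategy right (decompose, apply Gaussian elimination via Lemma~\ref{lem:aux2}), but it rests on a mis-reading of the structure of the two complexes which makes the argument fail as stated.

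The side of~\eqref{eq:slide12} in which the unzip of the digon has been slid past the crossing is \emph{not} a two-term complex. Once the $2$-strand has been split into $1+1$ below the crossing, the $1$-strand meets \emph{two} separate $1$-strands, so its complex is the tensor product of two $11$-crossing complexes: four summands distributed over \emph{three} homological degrees ($1,2,1$). This is exactly what Figure~\ref{fig:slide12cplx-r} shows (one term with $q^4$, two with $q^2$, one with no shift). The other side, with the digon sitting entirely above the crossing, does give a two-term complex (Figure~\ref{fig:slide12cplx-l}). You assert that \emph{both} sides are two-term complexes, and you then decompose the terms of the two-term side. That cannot work: Gaussian elimination on a complex living in two homological degrees produces a complex still living in at most two homological degrees, so it can never be matched against a three-step complex. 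Lemma~\ref{lem:aux2} itself makes this explicit: its top complex $A\to(A\oplus A')\oplus B\to C\oplus A'$ has three steps, its bottom complex $0\to B\to C$ has two. The decompositions must therefore be applied to the \emph{four-term} complex $\cD$ (the paper decomposes its two extreme resolutions, the ``digon on dumbell'' and ``square plus dumbell'' webs, via Lemmas~\ref{lem:digon} and the square lemmas), and the resulting cancellable $A\xrightarrow{\cong}A$ and $A'\xrightarrow{\cong}A'$ pairs straddle consecutive homological degrees of $\cD$, collapsing it to the two-term $\cD'$.

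A further, subtler problem with the particular decomposition you invoke: the bimodule of the H-web carrying a digon has, as you note, the same underlying module $R_{111}\otimes_{12}R_{111}\otimes_{21}R_{21}$ as $\hatsquareup_{1112}$, but with a \emph{different} left action -- the digon's upper endpoints are visible boundary, so the left action is by $R_{111}$ rather than by $R_{21}$. Lemma~\ref{lem:1112-square} is a statement about $R_{21}$-$R_{21}$-bimodules; its two summands $\hatdumbell_{21}$ and $\hattwoarcs_{21}$ are not $R_{111}$-$R_{21}$-bimodules, so the decomposition you write for $\widehat\Gamma^0$ does not respect the bimodule structure that the homotopy equivalence has to preserve. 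The paper avoids this by never decomposing the two-term side at all. Finally, the closing remark about $d^\pm$ not being built from units and counits of the induction/restriction biadjunction is not accurate: $d^\pm_1$, $d^\pm_2$ for the $12$-crossing are defined precisely from $\Delta_{21}$, inclusions, multiplications and projections, all of which are such (co)units; the point being made in the paper is specific to the $22$-crossing three-term complex, not a general contrast with the present lemma.
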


\begin{proof} The complex associated to the l.h.s., denoted 
$\cD'$, is given in 
Figure~\ref{fig:slide12cplx-l}. 
\begin{figure}[h!]
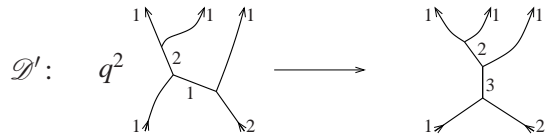

\labellist
\pinlabel $\cD'\colon$ at -180 114
\pinlabel $q^{2}$ at -50 114
\tiny\hair 2pt
\pinlabel $1$ at  -8  14
\pinlabel $1$ at  -8 205
\pinlabel $2$ at 188  14
\pinlabel $1$ at 188 205 
\pinlabel $1$ at 118 205
\pinlabel $2$ at  61 132
\pinlabel $1$ at  84  72
\pinlabel $1$ at 492  14
\pinlabel $1$ at 492 205
\pinlabel $2$ at 684  14
\pinlabel $1$ at 682 205
\pinlabel $1$ at 608 205
\pinlabel $3$ at 598  84
\pinlabel $2$ at 584 146
\endlabellist
\centering
\figs{0.22}{slide12cplx-l}
\caption{Complex of the l.h.s. of Equation~\eqref{eq:slide12}}
\label{fig:slide12cplx-l}
\end{figure}
The complex associated to the r.h.s., denoted $\cD$, is given 
in Figure~\ref{fig:slide12cplx-r}. 
\begin{figure}[h!]
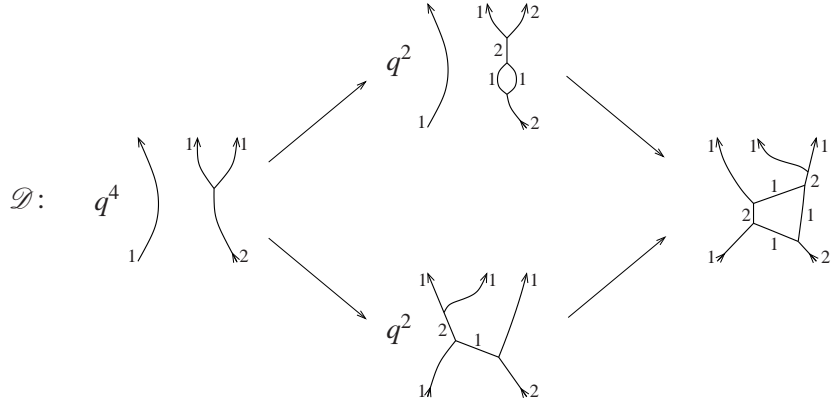

\labellist
\pinlabel $\cD\colon$ at -180 344
\pinlabel $q^{2}$ at 450 120
\pinlabel $q^{2}$ at 450 580
\pinlabel $q^{4}$ at -50 340
\tiny\hair 2pt
\pinlabel $1$ at  -8 246
\pinlabel $2$ at 184 246
\pinlabel $1$ at 184 437
\pinlabel $1$ at  90 437
\pinlabel $1$ at 488 475
\pinlabel $2$ at 682 474
\pinlabel $2$ at 682 664
\pinlabel $1$ at 588 664
\pinlabel $2$ at 619 598
\pinlabel $1$ at 608 548
\pinlabel $1$ at 661 548
\pinlabel $1$ at 489  14
\pinlabel $1$ at 490 204
\pinlabel $2$ at 681  14
\pinlabel $1$ at 681 204
\pinlabel $1$ at 610 204
\pinlabel $2$ at 524 118
\pinlabel $1$ at 584 102
\pinlabel $1$ at  984 244
\pinlabel $1$ at  984 435
\pinlabel $2$ at 1180 244
\pinlabel $1$ at 1178 435
\pinlabel $1$ at 1056 435
\pinlabel $2$ at 1044 316
\pinlabel $1$ at 1154 316
\pinlabel $2$ at 1162 374
\pinlabel $1$ at 1092 366
\pinlabel $1$ at 1092 266
\endlabellist
\centering
\figs{0.22}{slide12cplx-r}
\caption{Complex of the r.h.s. of Equation~\eqref{eq:slide12}}
\label{fig:slide12cplx-r}
\end{figure}

This time we have
\begin{equation*}
\labellist 
\pinlabel $\cong$ at 205 110
\pinlabel $\oplus\ q^2$ at 488 110
\pinlabel $A$ at 335 -25
\pinlabel $A'$ at 608 -25
\tiny\hair 2pt
\pinlabel $1$ at  -8  12
\pinlabel $1$ at  54 204
\pinlabel $1$ at 143 204
\pinlabel $1$ at  70  85
\pinlabel $1$ at 126  85
\pinlabel $2$ at 112 133
\pinlabel $2$ at 116  12
\pinlabel $1$ at 267  12
\pinlabel $2$ at 392  12
\pinlabel $1$ at 418 205
\pinlabel $1$ at 328 205
\pinlabel $1$ at 542  12
\pinlabel $2$ at 669  12
\pinlabel $1$ at 693 205
\pinlabel $1$ at 603 205
\endlabellist
\figs{0.22}{digondumb}\mspace{50mu}
\figs{0.22}{halfdumb-r}\mspace{50mu}
\figs{0.22}{halfdumb-r}\medskip
\end{equation*}
and
\begin{equation*}
\labellist 
\pinlabel $\cong$ at 205 110
\pinlabel $\oplus\ q^2$ at 490 110
\pinlabel $C$ at 332 -25
\pinlabel $A'$ at 610 -25
\tiny\hair 2pt
\pinlabel $1$ at  -6  14
\pinlabel $2$ at 144  14
\pinlabel $1$ at  -6 204
\pinlabel $1$ at  54 204
\pinlabel $1$ at 144 204
\pinlabel $2$ at  10  78
\pinlabel $1$ at 118  78
\pinlabel $2$ at 122 138
\pinlabel $1$ at  65 124
\pinlabel $1$ at  65  32 
\pinlabel $1$ at 269  12
\pinlabel $1$ at 269 204
\pinlabel $2$ at 386  12
\pinlabel $1$ at 419 204
\pinlabel $1$ at 330 204
\pinlabel $3$ at 314  84
\pinlabel $2$ at 354 114 
\pinlabel $1$ at 546  12
\pinlabel $2$ at 667  12
\pinlabel $1$ at 697 205
\pinlabel $1$ at 605 205
\endlabellist
\figs{0.22}{sqdumb-r}\mspace{50mu}
\figs{0.22}{dumbdumb-r}\mspace{50mu}
\figs{0.22}{halfdumb-r}\medskip
\end{equation*}

We can apply Lemma~\ref{lem:aux2} because
\begin{enumerate}
\item $f$ is equal to the identity,
\item $u$ is equal to minus the identity,
\item $s f=0$ and \\
\item 
$
\labellist
\pinlabel $q^{-2}$ at -55 110
\pinlabel $\xra{\mspace{14mu} d \mspace{8mu}}$ at 250 122
\pinlabel $q^{-4}$ at 390 110
\tiny\hair 2pt
\pinlabel $1$ at  -9  16
\pinlabel $1$ at  -7 202
\pinlabel $1$ at 120 202
\pinlabel $2$ at 192  16
\pinlabel $1$ at 190 202
\pinlabel $2$ at  62 128
\pinlabel $1$ at  90  64
\pinlabel $1$ at 440  16
\pinlabel $1$ at 446 205
\pinlabel $2$ at 644  16
\pinlabel $1$ at 640 205
\pinlabel $1$ at 538 205
\pinlabel $3$ at 518  90
\pinlabel $2$ at 536 150
\endlabellist
\centering
\mspace{28mu}
\raisebox{-17pt}{
\figs{0.205}{hdumb-l}
\hspace{10ex}
\figs{0.20}{dumbdumb}
}$
\quad
\\[2ex]%
\hspace*{8ex} 

$
\labellist
\pinlabel $=$ at -130 102
\pinlabel $q^{-2}$ at -45 110
\pinlabel $\xra{\mspace{14mu}d\mspace{8mu}}$ at 250 122
\pinlabel $q^{-4}$ at 365 110
\pinlabel $\xra{\mspace{30mu}}$ at 655 110
\pinlabel $q^{-4}$ at 770 110
\pinlabel $\xra{\mspace{30mu}}$ at 1070 110
\pinlabel $q^{-4}$ at 1180 110
\tiny\hair 2pt
\pinlabel $1$ at  -9  16
\pinlabel $1$ at  -7 202
\pinlabel $1$ at 120 202
\pinlabel $2$ at 192  16
\pinlabel $1$ at 190 202
\pinlabel $2$ at  62 128
\pinlabel $1$ at  90  64
\pinlabel $1$ at 396  16
\pinlabel $1$ at 396 202
\pinlabel $2$ at 599  16
\pinlabel $1$ at 596 202
\pinlabel $1$ at 470 202
\pinlabel $2$ at 457  86
\pinlabel $1$ at 500 137
\pinlabel $2$ at 578 146
\pinlabel $1$ at 570  86
\pinlabel $1$ at 506  38
\pinlabel $1$ at 796  16
\pinlabel $1$ at 800 204
\pinlabel $2$ at 999  16
\pinlabel $1$ at 997 204
\pinlabel $1$ at 886 204
\pinlabel $3$ at 886  87
\pinlabel $2$ at 936 130
\pinlabel $1$ at 1200  16
\pinlabel $1$ at 1206 205
\pinlabel $2$ at 1404  16
\pinlabel $1$ at 1400 205
\pinlabel $1$ at 1296 205
\pinlabel $3$ at 1278  90
\pinlabel $2$ at 1294 150
\endlabellist
\centering
\mspace{70mu}
\raisebox{-17pt}{
\figs{0.205}{hdumb-l}
\hspace{8ex}
\figs{0.20}{sqdumb}
\hspace{8ex}
\figs{0.20}{dumbdumb-rot}
\hspace{8ex}
\figs{0.20}{dumbdumb}
}
$\ .
\end{enumerate}
\medskip

\n Since we have given all the maps the reader can check the claims 
by straightforward computations. Note that for the first two claims it suffices to 
compute the image of $1\stens 1$. For the last two claims one has to compute the 
images of $1\stens 1\stens 1$ and $1\stens x_2\stens 1$. 
\end{proof}

\noindent Again, there are analogous homotopy equivalences 
for a 
negative crossing or if one swaps the 
$1$- and $2$-strands in the lemma above.

\section{Invariance under the Markov moves}
\label{sec:Markov}

\subsection{Hochschild homology of bimodules as the homology of a Koszul complex 
of polynomial rings}

The Hochschild homology of a bimodule over the polynomial ring can be obtained 
as the homology of a corresponding Koszul 
complex of certain polynomial rings in many variables. This idea was explained 
and used by Khovanov in~\cite{Kh}. Here we shall briefly describe how to 
extend it to our case.


First of all, we change our polynomial notation in this section. Namely, the 
polynomial ring $R_{i_1\ldots i_k}$ can be 
represented as the polynomial ring in the variables which are the $i_1$ elementary symmetric polynomials in the first $i_1$ 
variables, the $i_2$ 
elementary symmetric polynomials in the following $i_2$ variables, etc., and the 
$i_k$ elementary symmetric polynomials in the 
last $i_k$ variables. In this section we will always work with these ``new" 
variables, i.e. the elementary symmetric polynomials, because it is more 
convenient for our purposes here. 
Thus to each strand labelled by $k$, we associate the $k$ variables 
$x_1,\ldots,x_k$, such that the degree of $x_i$ is equal to $2i$, for 
$i=1,\dots, k$.

To begin with, we describe which polynomial ring to associate to a web. 
Take a web and choose a height function to 
separate the vertices according to height. This way chop up the web into 
several layers with only one vertex. To each layer we associate a new set of 
variables. To each vertex with incident edges labelled $i$, $j$ and $i+j$
we associate the $i+j$ polynomials which are the differences of the $k$-th 
symmetric polynomials in the outgoing variables and 
the incoming variables, for every $k=1,\ldots,i+j$. Moreover, if there are two 
different sets of variables $x_1,\ldots,x_k$ and $x'_1,\ldots,x'_k$ associated 
to a given edge labelled $k$, we also associate the $k$ polynomials 
$x_i-x'_i$, for all $i=1,\ldots,k$. Finally, to the whole graph we 
associate the polynomial ring in all the variables modded out by the ideal 
generated by all the polynomials associated to the vertices and edges.

There is an isomorphism between these polynomial rings and the corresponding 
bimodules associated to the graph. Indeed, to 
the tensor product $p(\underline{x})\otimes q(\underline{x})$ in variables $\underline{x}$, corresponds the polynomial 
$p(\underline{x'})q(\underline{x})$, where the variables $\underline{x}$ are of the bottom layer, and $\underline{x'}$ are of 
the top layer. Loosely speaking, the position of the factor in the tensor product corresponds to the same polynomial in the 
variables corresponding to the layer.

Let us do an example:
\begin{exe}
Consider the web $\unorsmoothing_{21}$:
\begin{equation*}
\labellist
\tiny\hair 2pt
\pinlabel $2$ at  5  36
\pinlabel $1$ at  107 36
\pinlabel $3$ at 80 100
\pinlabel $2$ at  3 186
\pinlabel $1$ at 111 186
\pinlabel $x_1,x_2$ at  -20  -10
\pinlabel $y_1$ at  120 -10
\pinlabel $x'_1,x'_2$ at  -10 230
\pinlabel $y'_1$ at  120 230
\endlabellist
\figs{0.25}{dumbell}
\end{equation*}

The polynomial ring associated to this web is the ring 
$P_{\unorsmoothing_{21}}:=\bC[x_1,x_2,y_1,x'_1,x'_2,y'_1]$. The ideal 
$I_{\unorsmoothing_{21}}$ by which we have to quotient, is generated by the 
differences of the symmetric polynomials in all 
top and bottom variables (since the middle edge is labelled by 3). There are 
three elementary symmetric polynomial in 
this case: $\Sigma_1=x_1+y_1$, 
$\Sigma_2=x_2+x_1y_1$ and $\Sigma_3=x_2y_1$, and so
$$I_{\unorsmoothing_{21}}=\langle x_1+y_1-x'_1-y'_1, x_2+x_1y_1-x'_2-x'_1y'_1,x_2y_1-x'_2y'_1 \rangle.$$
Hence, the polynomial ring we associate to it is given by
$$R_{\unorsmoothing_{21}}=P_{\unorsmoothing_{21}}/I_{\unorsmoothing_{21}}.$$

On the other hand, the bimodule $\hatdumbell_{21}$ associated to the web $\unorsmoothing_{21}$ is $R_{21}\otimes_3 
R_{21}$, and its elements are linear combination of elements of the form 
$p(x_1,x_2,y_1)\otimes q(x_1,x_2,y_1)$, for some 
polynomials $p$ and $q$. Finally, the isomorphism between $\hatdumbell_{21}$ and $R_{\unorsmoothing_{21}}$ is given 
by:
$$ p(x_1,x_2,y_1)\otimes q(x_1,x_2,y_1) \longleftrightarrow p(x'_1,x'_2,y'_1)q(x_1,x_2,y_1).$$
\end{exe}

In such a way we have obtained the bijective correspondence between the bimodules 
$\widehat{\Gamma}$ and the polynomial rings 
$R_{\Gamma}$ that are associated to the open trivalent graph $\Gamma$. The closure of $\Gamma$ in the bimodule picture 
corresponds to taking the Hochschild homology of $\widehat{\Gamma}$. In the 
polynomial ring picture this is isomorphic to the 
homology of the Koszul complex over $R_{\Gamma}$ which is the tensor product of the the complexes
\begin{equation}
0\xrightarrow{\quad} R_{\Gamma}\{-1,2i-1\} {\xrightarrow{\ x_i-x'_i\ }} R_{\Gamma} \xrightarrow{\quad} 0,\label{koshoh}
\end{equation}
where the $x_i$'s are the bottom layer variables, and the $x'_i$'s are the 
top layer variables. We put the right-hand side term in (co)homological degree 
zero. The first shift is in the Hochschild (homological) degree, and the second 
one is in the $q$-degree, so that the maps have bi-degree $(1,1)$.

\subsection{Invariance under the first Markov move}

Essentially we have to show that the Hochs\-child homology of the tensor product 
$B_1\otimes B_2$ of the bimodules $B_1$ and 
$B_2$ is isomorphic to the Hochschild homology of $B_2\otimes B_1$, i.e.
\begin{equation}
HH(B_1\otimes B_2) = HH(B_2\otimes B_1).\label{m1}
\end{equation}

We shall prove this by passing to the polynomial ring and Koszul complex description from above. Recall, that to the 
open trivalent graph $\Gamma$ we have associated the polynomial algebra $P_{\Gamma}$ (the ring of polynomials in all 
variables) quotiented by the ideal $I_{\Gamma}$ generated by 
certain polynomials. Since for each layer we introduced new variables, the sequence of these polynomials is regular 
and so we have that $R_{\Gamma}=P_{\Gamma}/I_{\Gamma}$ is the homology of the 
Koszul complex obtained by tensoring together all complexes of the form 
\begin{equation*}
0\xrightarrow{\quad} P_{\Gamma} {\xrightarrow{\ f\ }} P_{\Gamma} \xrightarrow{\quad} 0,
\end{equation*} 
for $f\in I_{\Gamma}$. We call this 
the Koszul complex generated by $I_{\Gamma}$.

Finally, the object associated to the closure of the graph $\Gamma$ is given by the homology of the Koszul complex defined by 
\eqref{koshoh}, and so it is isomorphic to the homology of the Koszul complex 
generated by the polynomials which define $I_{\Gamma}$ together with the 
polynomials $x_i-x'_i$ which come from the closure. If $\Gamma$ is the vertical 
glueing of $\Gamma_1$ and $\Gamma_2$ then in $I_{\Gamma}$ we have the polynomials 
$y_j-y'_j$ which correspond to the edges which are glued together. The Hochschild 
homology of $\widehat{\Gamma_1}\otimes\widehat{\Gamma_2}$ is isomorphic to the 
homology generated by $I_{\Gamma_1},I_{\Gamma_2}$ and the polynomials 
$x_i-x'_i$ and $y_j-y'_j$. Clearly the same holds for $\widehat{\Gamma_2}\otimes\widehat{\Gamma_1}$, with the role of the $x_i-x'_i$ and $y_j-y'_j$ interchanged. 
Thus we have proved \eqref{m1}.

\subsection{Invariance under the second Markov move}

Invariance under the second Markov move corresponds to invariance under the Reidemeister move I. If the 
strand involved is labelled 1, the result was proved by Khovanov and Khovanov and Rozansky~\cite{Kh, KR2} (see below).
 
Remains to show invariance if the strand is labelled 2. For both 
the positive and the negative crossing, we have the same three resolutions:
\begin{equation*}
\labellist
\tiny\hair 2pt
\pinlabel $2$ at -15 114
\pinlabel $2$ at 140 114
\pinlabel $2$ at 530 114
\pinlabel $2$ at 685 114
\pinlabel $3$ at 530 205
\pinlabel $1$ at 685 205
\pinlabel $2$ at 530 300
\pinlabel $2$ at 685 300
\pinlabel $1$ at 610 125
\pinlabel $1$ at 610 283
\pinlabel $2$ at 1077 114
\pinlabel $2$ at 1230 114
\pinlabel $2$ at 1077 300
\pinlabel $2$ at 1229 300
\pinlabel $4$ at 1180 205
\endlabellist
\figs{0.22}{idweb-markov}
\qquad\qquad
\figs{0.22}{sqweb-markov}
\qquad\qquad
\figs{0.22}{dumbell-markov}
\end{equation*}

For each of these, we shall give its description in a polynomial language, and compute the homology of the corresponding 
Koszul complex.\\

{\em The resolution} $\twoarcs$: Before closing the right strand, we have an open graph. To the bottom layer we associate variables
$x_1$ and $x_2$ to the left strand, and $y_1$ and $y_2$ to the right strand, while to the top layer we associate variables
$x'_1$ and $x'_2$ to the left strand, and $y'_1$ and $y'_2$ to the right strand. Then the ring $R_{\twoarcs}$ we associate to it, is the ring of polynomials in all these variables modded out by 
the 
ideal generated by the polynomials $x'_1-x_1$, $x'_2-x_2$, $y'_1-y_1$ and $y'_2-y_2$, i.e. it is isomorphic to the ring 
$$B:=\bC[x_1,x_2,y_1,y_2].$$ 

{\em The resolution} $\squareup$: The variables that we associate to the bottom and the top layers are the same as above. To the 
bottom middle strand we associate variable $z_1$, to the top middle strand we associate $z'_1$ and to the right strand we 
associate variable $t_1$. Then the corresponding ring $R_{\squareup}$ is the ring of polynomials in all these variables, 
modded out by the ideal generated by the polynomials $y_1-z_1-t_1$, $y_2-z_1t_1$, $y'_1-z'_1-t_1$, 
$y'_2-z'_1t_1$, 
$x_1+z_1-x'_1-z'_1$, $x_2+x_1z_1-x'_2-x'_1z'_1$, $x_2z_1-x'_2z'_1$. From the first four relations, we can exclude $t_1$ and obtain the quadratic relations for $z_1$ and $z'_1$: $z_1^2=y_1 z_1-y_2$ and ${z'_1}^2=y'_1 z'_1-y'_2$. Hence, 
every element from $R_{\squareup}$ can be written as $a+bz_1+cz'_1+dz_1z'_1$, where $a$, $b$, $c$ and $d$ are 
polynomials only in $x$'s and $y$'s (with or without primes).\\

{\em The resolution} $\unorsmoothing$: The variables we associate to the bottom and top layers are again the same. This time, the 
ring $R_{\unorsmoothing}$ we obtain by quotienting by the ideal generated by the following four polynomials: 
$x_1+y_1-x'_1-y'_1$, $x_2+y_2+x_1y_1-x'_2-y'_2-x'_1y'_1$, 
$x_2y_1+x_1y_2-x'_2y'_1-x'_1y'_2$ and $x_2y_2-x'_2y'_2$.\\

To the closure of the right strands of each of these graphs, corresponds the homology of the tensor product of the following 
two Koszul complexes:
\begin{gather*}
0\ \xrightarrow{\quad\ } R_{\Gamma} \{-1,1\}\ {\xrightarrow{y_1-y'_1}}\ R_{\Gamma} \xrightarrow{\quad\ } 0,\\
0\ \xrightarrow{\quad\ } R_{\Gamma} \{-1,3\}\ {\xrightarrow{y_2-y'_2}}\ R_{\Gamma} \xrightarrow{\quad\ } 0.
\end{gather*}

Hence, in all three cases we can have homology in three homological gradings $0$, $-1$ and $-2$. We denote this homology by 
$HH^R(\Gamma)$.\\

In the case of $\twoarcs$ we have that both differentials are $0$. Hence
\begin{align*}
 HH^R_0(\twoarcs)\mspace{8mu} &=B=\bC[x_1,x_2,y_1,y_2],\\
 HH^R_{-1}(\twoarcs) &=B\{1\}\oplus B\{3\},\\
 HH^R_{-2}(\twoarcs) &=B\{4\}.
\end{align*}

For the other two cases the computations are a bit more involved and we want to explain the general 
idea first. The Hochschild homology is the homology of a complex which is the tensor product of complexes 
of the form 
$$0\ \xrightarrow{\quad\ } P/I \ {\xrightarrow{p}}\ P/I \xrightarrow{\quad\ } 0,$$
where $P$ is a polynomial ring, $I$ is an ideal and $p\in R$ is a polynomial. Let us explain how 
to compute the homology of one such complex. 

The main part is the computation of the kernel and the cokernel of the map 
above. The cokernel is easily
computed, and is equal to the quotient ring $P/\langle I,p \rangle$. Now, let 
us pass to the kernel.
For any polynomial $q\in P$ to be in the kernel, i.e. to be a cocycle, we must have $pq\in I$. 
The ideals are always finitely generated, say $I$ is generated by $i_1,\ldots,i_n$. Therefore we have to 
find all solutions to the equation $a_1i_1+\cdots+a_ni_n=pq$, 
i.e. we rewrite $a_1i_1+\cdots+a_ni_n$ until it becomes of the form $p$ times something, which imposes 
constraints on the $a_i$. The solutions are generated by $q_1,\ldots,q_k$, which generate an ideal $Q$ 
(in the cases we are interested in, it is always a principal ideal, i.e. $Q=qP$, for some $q\in P$). 
Then the kernel is given by $Q/I$ (i.e. $qP/I$). However, this is isomorphic to $Q/\langle I,p \rangle$, since $pq_s\in I$ 
for all $s$, which is the form in which we present the results below. 

It is not hard to extend these calculations to a tensor product of complexes of the above form. In particular, all 
homologies are certain ideals, modded out by the ideal generated by $I_{\Gamma}$ and the polynomials $y_i-y'_i$, $i=1,2$, 
which in particular includes the polynomials $x_i-x'_i$, $i=1,2$ and also $z_1-z'_1$ in the case of the web $\squareup$. 
Hence, in the homology, all variables with primes are equal to the corresponding variables without the primes.\\
 
In the case of $\squareup$, we have that $y_2-y'_2=t_1(z_1-z'_1)=(y_1-z_1)(y_1-y'_1)$, and straightforward 
computations along the lines sketched above give
\begin{align*}
HH^R_0(\squareup)\mspace{8mu} &=A=\bC[x_1,x_2,y_1,y_2,z_1]/\langle z_1^2-y_1z_1+y_2 \rangle,\\
HH^R_{-1}(\squareup) &=\{\left((y_1-z_1)g+(x_2-y_2-z_1(x_1-y_1))h\right),g)| g, h\in A\}\subset A\{1\}\oplus A\{3\},\\
HH^R_{-2}(\squareup) &=(x_2-y_2-z_1(x_1-y_1))A\{4\}.
\end{align*}
Note that we have $A\cong B\oplus z_1 B$.

Finally, in the case of $\unorsmoothing$, we obtain
\begin{align}
HH^R_0(\unorsmoothing)\mspace{8mu} &=B=\bC[x_1,x_2,y_1,y_2],
\label{un220}\\
HH^R_{-1}(\unorsmoothing) &=\{\left(-c(x_2-y_2)+(cy_1+dy_2)(x_1-y_1),c(x_1-y_1)+d(x_2-y_2)\right)| c,d\in B\} 
\label{un221} \\
 & \mspace{30mu}\subset B\{1\}\oplus B\{3\}, \nonumber\\
HH^R_{-2}(\unorsmoothing) &=pB\{4\}, 
\label{un222}
\end{align}
where $p=(x_2-y_2)^2+(x_1-y_1)(x_1y_2-x_2y_1)$.\\

Now, we pass to the differentials. In our polynomial notation, in the case of the positive crossing, the 
maps are (up to a non-zero scalar):

$$R_\twoarcs \rightarrow R_\squareup\{-4\}: 1\mapsto (x_2+x'_2-y_2-y'_2)+(y_1-x'_1)z_1+(y'_1-x_1)z'_1.$$

In the case of the second map ($\squareup \rightarrow q^{-2}\unorsmoothing$), it is given by the coefficient of $z_1z'_1$ 
after multiplication with  
$$
\begin{array}{ll}
&(-y_1^3+2y_1y_2+x'_1(y_1^2-y_2)-x'_2y_1-{y'}_1^3+2y'_1y'_2+x_1({y'_1}^2-y'_2)-x_2y'_1) \\
+&(z_1+z'_1)(x_2-x_1y'_1+{y'_1}^2-y'_2+x'_2-x'_1y_1+{y}_1^2-y_2) +\\
+&z_1z'_1 (x_1+x'_1-y_1-y'_1).
\end{array}
$$
Recall that the elements of $R_{\squareup}$ are of the form $a+bz_1+cz'_1+dz_1z'_1$, where $a$, $b$, $c$ and $d$ are the 
polynomials only in $x$'s and $y$'s, while $z_1^2=y_1 z_1-y_2$ and ${z'_1}^2=y'_1 z'_1-y'_2$. 

For the negative crossing, the maps are the following
$$R_\unorsmoothing \rightarrow R_\squareup:  1\mapsto 1,$$
and
$$R_\squareup \rightarrow R_\twoarcs\{2\}: a+bz_1+cz'_1+dz_1z'_1 \mapsto b+c+dy_1.$$

Since we are interested in the differentials in the case when the right strands are closed, in the 
homology all variables with primes are equal to the ones without the primes 
(as we explained above), and the differentials reduce to the 
following (again up to a non-zero scalar)
\begin{align*}
\figins{-22}{0.7}{idweb-markov}\rightarrow 
q^{-4}\ \figins{-22}{0.7}{sqweb-markov}\ &\colon\
1\mapsto (x_2-y_2)-(x_1-y_1)z_1,
\displaybreak[0]\\[1.2ex]
\figins{-22}{0.7}{sqweb-markov}\rightarrow q^{-2}\
\figins{-22}{0.7}{dumbell-markov}\ &\colon\ a+bz_1 \mapsto a(x_1-y_1)+b(x_2-y_2),
\displaybreak[0]\\[1.2ex]
\figins{-22}{0.7}{dumbell-markov}\ \rightarrow\
\figins{-22}{0.7}{sqweb-markov}\ &\colon\  1\mapsto 1,
\displaybreak[0]\\[1.2ex]
\figins{-22}{0.7}{sqweb-markov} \rightarrow q^2\
\figins{-22}{0.7}{idweb-markov}\ &\colon\ a+bz_1 \mapsto b,
\end{align*}
where $a,b\in B$.\\

Now, by straightforward computation of the homology for both positive and negative crossing in each 
Hochschild degree, we obtain the following simple behaviour of HHH under the Reidemeister I move:
\begin{align*}
HHH\Biggl(
\raisebox{-14pt}{
\labellist
\tiny\hair 2pt
\pinlabel $2$ at 25 14
\endlabellist
\figs{0.16}{R1plus}}
\Biggr) 
&= HHH\Biggl(
\raisebox{-14pt}{\
\labellist
\tiny\hair 2pt
\pinlabel $2$ at 29 14
\endlabellist
\figs{0.16}{arcbent-ur}\ }
\Biggr),
\\[1.2ex]
HHH\Biggl(
 \raisebox{-14pt}{
\labellist
\tiny\hair 2pt
\pinlabel $2$ at 25 14
\endlabellist
\figs{0.16}{R1minus}}
\Biggr)
&= HHH\Biggl(
\raisebox{-14pt}{\
\labellist
\tiny\hair 2pt
\pinlabel $2$ at 29 14
\endlabellist
\figs{0.16}{arcbent-ur}\ }
\Biggr)<2>\{-2,-2\}.
\end{align*}
We recall that $<i>$ denotes the upward shift by $i$ in the homological degree, while $\{k,l\}$ denotes 
the upward shift by $k$ in the Hochschild degree and by $l$ in the $q$-degree.\\

\noindent For the Reidemeister I move involving a strand labelled 1, we get a similar shift 
(see \cite{Kh, KR2}, or apply the same methods as above):
\begin{align*}
HHH\Biggl(
\raisebox{-14pt}{
\labellist
\tiny\hair 2pt
\pinlabel $1$ at 25 14
\endlabellist
\figs{0.16}{R1plus}}
\Biggr) 
&= HHH\Biggl(
\raisebox{-14pt}{\
\labellist
\tiny\hair 2pt
\pinlabel $1$ at 29 14
\endlabellist
\figs{0.16}{arcbent-ur}\ }
\Biggr),
\\[1.2ex]
HHH\Biggl(
 \raisebox{-14pt}{
\labellist
\tiny\hair 2pt
\pinlabel $1$ at 25 14
\endlabellist
\figs{0.16}{R1minus}}
\Biggr)
&= HHH\Biggl(
\raisebox{-14pt}{\
\labellist
\tiny\hair 2pt
\pinlabel $1$ at 29 14
\endlabellist
\figs{0.16}{arcbent-ur}\ }
\Biggr)<1>\{-1,-1\}.
\end{align*}
The overall shift in the definition of $H_2(D)$ compensates this behaviour under the Reidemeister I 
moves and we get a genuine link invariant. 

\subsection{Categorification of the axioms A1 and A2}

In this subsection we shall show that our construction categorifies the axioms A1 and A2 of the MOY calculus, that include 
the closures of the strands. The powers of $q$ in A1 and A2 will correspond to the internal $q$-grading (grading of the 
polynomial ring), while the powers of $t$ will correspond to the Hochschild grading.

First we focus on the A1 axiom for arbitrary $k$. The circle is the closure of the single strand labelled with $k$. Denote 
the graph that consists of this strand by $\idup\mspace{-3mu}_k$, and the variables that we associate to it by $x_1,\ldots,x_k$ (remember 
that in this section we assume that $\deg x_i=2i$). Then to this $\idup\mspace{-3mu}_k$ we associate the ring 
$R_{\idup\mspace{-3mu}_k}=\bC[x_1,\ldots,x_k]$, and to its closure, the Hochschild homology 
$HH(\idup\mspace{-3mu}_k)$, which is the homology of the 
Koszul complex obtained by tensoring
\begin{equation*}
0\xrightarrow{\quad} R_{\idup\mspace{-3mu}_k}\{-1,2i-1\} {\xrightarrow{\ 0\ }} R_{\idup\mspace{-3mu}_k} \xrightarrow{\quad} 0,
\end{equation*}
for all $i=1,\ldots,k$. Hence we have
$$HH(\idup\mspace{-3mu}_k)=\bigotimes_{i=1}^k {\left(\bC[x_1,\ldots,x_k] \oplus \bC[x_1,\ldots,x_k] \{-1,2i-1\}\right)},$$
which categorifies the axiom A1.\\

Now we consider the axiom A2. The left hand side of the axiom is the closure of the right strand of the graph
that we denoted by $\unorsmoothing_{ij}$.
As we said previously, we are only interested in the cases when the indices $i$ and $j$ are 
from the set $\{1,2\}$, and so we have four cases. Each of these cases we treat in the same way as 
the case of $\unorsmoothing_{22}$ which we dealt with in the previous subsection. We associate the 
variables $x_1,\ldots,x_i$ and 
$x'_1,\ldots,x'_i$ to the strands on the left-hand side and the variables $y_1,\ldots,y_j$ and 
$y'_1,\ldots,y'_j$ to the strands on the right-hand side, and compute the homology 
(that we denote by $HH^R(\unorsmoothing_{ij})$) of the tensor product of the Koszul complexes 
\begin{equation*}
0\xrightarrow{\quad} R_{\unorsmoothing_{ij}}\{-1,2l-1\} {\xrightarrow{\ y_l-y'_l \ }} R_{\unorsmoothing_{ij}} \xrightarrow{
\quad} 0,
\end{equation*}
for $l=1,\ldots,j$. In the remaining part of the graph, the variables $y$ don't appear again, while as before, in the 
$HH^R(\unorsmoothing_{ij})$ we have that $x_l=x'_l$, for all $l=1,\ldots,i$. Finally, the right hand side of the axiom A2
is $\idup\mspace{-3mu}_i$ to which we associate the variables $x_1,\ldots,x_i$, and consequently the ring $R_{\idup\mspace{-3mu}_i}=\bC[x_1,\ldots,x_i]$.

Now, in the four cases we are interested in, the homologies are the following:\\

$i=j=1:$ The corresponding ring in this case is $R_{\unorsmoothing_{11}}=\bC[x_1,x'_1,y_1,y'_1]/\langle 
x_1+y_1-x'_1-y'_1,x_1y_1-x'_1y'_1\rangle$, while $HH^R_{\unorsmoothing_{11}}$ is the homology of the complex
\begin{equation*}
0\xrightarrow{\quad} R_{\unorsmoothing_{11}}\{-1,1\} {\xrightarrow{\ y_1-y'_1 \ }} R_{\unorsmoothing_{11}} \xrightarrow{
\quad} 0.
\end{equation*}
Direct computation of this homology gives
\begin{align*}
HH^R_0(\unorsmoothing_{11})\mspace{8mu} &=\bC[x_1,y_1],\\
HH^R_{-1}(\unorsmoothing_{11}) &=(x_1-y_1)\bC[x_1,y_1]\{1\},
\end{align*}
which categorifies A2 in this case.\\

$i=2, j=1:$ The corresponding ring in this case is $R_{\unorsmoothing_{21}}=\bC[x_1,x_2,x'_1,x'_2,y_1,y'_1]/\langle 
x_1+y_1-x'_1-y'_1,x_1y_1+x_2-x'_1y'_1-x'_2,x_2y_1-x'_2y'_1\rangle$, while $HH^R_{\unorsmoothing_{21}}$ is the homology of the 
complex
\begin{equation*}
0\xrightarrow{\quad} R_{\unorsmoothing_{21}}\{-1,1\} {\xrightarrow{\ y_1-y'_1 \ }} R_{\unorsmoothing_{21}} \xrightarrow{
\quad} 0.
\end{equation*}
Then we have
\begin{align*}
HH^R_0(\unorsmoothing_{21})\mspace{8mu} &=\bC[x_1,x_2,y_1],\\
HH^R_{-1}(\unorsmoothing_{21}) &=(x_2-x_1y_1+y_1^2)\bC[x_1,x_2,y_1]\{1\},
\end{align*}
as wanted.\\

$i=1, j=2:$ The corresponding ring in this case is $R_{\unorsmoothing_{12}}=\bC[x_1,x'_1,y_1,y_2,y'_1,y'_2]/\langle 
x_1+y_1-x'_1-y'_1,x_1y_1+y_2-x'_1y'_1-y'_2,x_1y_2-x'_1y'_2\rangle$, while $HH^R_{\unorsmoothing_{12}}$ is the homology of the 
complex obtained by tensoring 
\begin{equation*}
0\xrightarrow{\quad} R_{\unorsmoothing_{12}}\{-1,1\} {\xrightarrow{\ y_1-y'_1 \ }} R_{\unorsmoothing_{12}} \xrightarrow{
\quad} 0,
\end{equation*}
and
\begin{equation*}
0\xrightarrow{\quad} R_{\unorsmoothing_{12}}\{-1,3\} {\xrightarrow{\ y_2-y'_2 \ }} R_{\unorsmoothing_{12}} \xrightarrow{
\quad} 0.
\end{equation*}
Thus, we obtain
\begin{align*}
HH^R_0(\unorsmoothing_{12})\mspace{8mu} &=\bC[x_1,y_1,y_2],\\
HH^R_{-1}(\unorsmoothing_{12}) &=\{\left(c(x_1-y_1)+dy_2,-c+dx_1)\right)| c,d\in \bC[x_1,y_1,y_2]\}\\
 & \mspace{30mu}\subset \bC[x_1,y_1,y_2]\{1\}\oplus \bC[x_1,y_1,y_2]\{3\},\\
HH^R_{-2}(\unorsmoothing_{12}) &=(x_1^2-x_1y_1+y_2)\bC[x_1,y_1,y_2]\{4\},
\end{align*}
which categorifies A2 in this case.\\ 

$i=j=2:$ This case we have already computed in the previous subsection, in the formulas (\ref{un220})-(\ref{un222}), which 
gives the categorification of A2 in this case.

\section{Conjectures about the higher fundamental representations}
\label{sec:conjectures}
Ideas similar to the ones in this paper will probably work for 
link components labelled with arbitrary MOY labels. 

First of all, to the open MOY web $\Gamma$ we associate
the bimodule $\widehat{\Gamma}$ in the same way.
In order to define the resolutions for the crossings, in principal one only has to know which 
maps to associate to the zip, the unzip, the digon creation 
and the digon annihilation. For the zip and the unzip our 
candidates are given in the following definition. 

\begin{defn} 
\label{defn:ij-mucomu}
We define the linear maps 
$$\mu_{ij}\colon \hatdumbell_{ij}\to \hattwoarcs_{ij}\quad
\text{and}\quad \Delta_{ij}
\colon \hattwoarcs_{ij}\to 
\hatdumbell_{ij}$$
by 
$$
\mu_{ij}(a\stens b)=ab
$$
and
$$\Delta_{ij}(1)=\sum_{\substack{
\alpha=(\alpha_1,\ldots,\alpha_i) 
\\ 
j\ge\alpha_1\ge\cdots\alpha_i\ge 0 } }
{(-1)^{|\alpha|} \pi_{\alpha}\otimes \pi'_{\bar{\alpha^*}}},$$ 
where $\bar{\alpha^*}$ is the conjugate partition of the complementary partition $\alpha^*=(j-a_i,\ldots,j-a_1)$, and 
$\pi_{\alpha}$ (respectively $\pi'_{\beta})$ is the Schur polynomial in the first $i$ variables (respectively, last $j$ 
variables). The conjugate (dual) partition of the partition $\beta=(\beta_1,\ldots,\beta_k)$, is the partition 
$\bar{\beta}=(\bar{\beta}_1,\bar{\beta}_2,\ldots)$, where $\bar{\beta}_j=\sharp\{i | \beta_i\ge j\}$.
\end{defn}

\noindent The proof of the following lemma will be part of a forthcoming paper:

\begin{lem} $\mu_{ij}$ and $\Delta_{ij}$ are 
$R_{ij}$-bimodule maps. 
\end{lem}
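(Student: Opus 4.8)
The plan is to treat $\mu_{ij}$ and $\Delta_{ij}$ separately; $\mu_{ij}$ costs nothing, and all the content sits in $\Delta_{ij}$. The map $\mu_{ij}\colon\hatdumbell_{ij}=R_{ij}\otimes_{R_{i+j}}R_{ij}\to R_{ij}=\hattwoarcs_{ij}$ is just the multiplication map of the commutative ring extension $R_{i+j}\subseteq R_{ij}$, and for any such extension $m(a\otimes b)=ab$ is $R_{i+j}$-balanced and satisfies $m\bigl((ra)\otimes(bs)\bigr)=r\,m(a\otimes b)\,s$ for $r,s\in R_{ij}$, so it is a bimodule map. For $\Delta_{ij}$, observe that the assertion ``$\Delta_{ij}$ is an $R_{ij}$-bimodule map'' unwinds to the single statement that $\Delta_{ij}(1)$ is \emph{balanced}, i.e.\ $(r\otimes1)\Delta_{ij}(1)=(1\otimes r)\Delta_{ij}(1)$ in $R_{ij}\otimes_{R_{i+j}}R_{ij}$ for every $r\in R_{ij}$ (this is exactly what makes $\Delta_{ij}(r):=(r\otimes1)\Delta_{ij}(1)$ well defined and simultaneously left and right $R_{ij}$-linear). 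Since both sides are $R_{i+j}$-linear in $r$, it suffices to check balance for $r$ ranging over a set of $R_{i+j}$-algebra generators of $R_{ij}$, for instance the elementary symmetric polynomials $e_k(x_1,\ldots,x_i)$, $k=1,\ldots,i$.

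I would prove balance by realizing $\Delta_{ij}$ as a nonzero scalar multiple of the comultiplication of the commutative Frobenius extension $R_{i+j}\subseteq R_{ij}$, exactly as the paper does in the cases $(i,j)\in\{(1,k),(2,2)\}$. Recall that $R_{ij}\cong H^{*}_{U(i+j)}(G(i,i+j))$ is free over $R_{i+j}$ on the Schur basis $\{\pi_\alpha\}$, with $\alpha$ running over the partitions inside the $i\times j$ rectangle, and that the Gysin pushforward gives an $R_{i+j}$-linear trace $\mathrm{tr}\colon R_{ij}\to R_{i+j}$ with $\mathrm{tr}(\pi_{(j^i)})=1$ and $\mathrm{tr}(\pi_\alpha)=0$ for $\alpha\ne(j^i)$, for which $\langle a,b\rangle:=\mathrm{tr}(ab)$ is a nondegenerate symmetric $R_{i+j}$-bilinear form satisfying $\mathrm{tr}(ab\cdot c)=\mathrm{tr}(a\cdot bc)$. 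For any Frobenius extension the coevaluation $\sum_\alpha\pi_\alpha\otimes\pi_\alpha^{\vee}$, with $\{\pi_\alpha^{\vee}\}$ the $\langle-,-\rangle$-dual basis, is automatically balanced: writing $r\pi_\alpha=\sum_\beta c_{\alpha\beta}\pi_\beta$ with $c_{\alpha\beta}\in R_{i+j}$ one has $c_{\alpha\beta}=\langle r\pi_\alpha,\pi_\beta^{\vee}\rangle=\langle\pi_\alpha,r\pi_\beta^{\vee}\rangle$, so $r\pi_\beta^{\vee}=\sum_\alpha c_{\alpha\beta}\pi_\alpha^{\vee}$, whence $(r\otimes1)\sum_\alpha\pi_\alpha\otimes\pi_\alpha^{\vee}=\sum_{\alpha,\beta}c_{\alpha\beta}\,\pi_\beta\otimes\pi_\alpha^{\vee}=(1\otimes r)\sum_\beta\pi_\beta\otimes\pi_\beta^{\vee}$. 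Thus the lemma reduces to identifying the dual basis, up to an overall nonzero scalar, as $\pi_\alpha^{\vee}=(-1)^{|\alpha|}\pi'_{\bar{\alpha^*}}$.

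The main obstacle is this last point: the identity that $\{\pi_\alpha\}_{\alpha\subseteq(j^i)}$ and $\{(-1)^{|\alpha|}\pi'_{\bar{\alpha^*}}\}_{\alpha\subseteq(j^i)}$ are mutually dual bases of $R_{ij}$ under $\langle-,-\rangle$, i.e.\ $\mathrm{tr}\bigl(\pi_\alpha\cdot(-1)^{|\beta|}\pi'_{\bar{\beta^*}}\bigr)=(-1)^{ij}\delta_{\alpha\beta}$ --- a statement purely about symmetric functions (and, geometrically, about the pairing of the two ``extreme'' Schubert bases of $G(i,i+j)$, one in the sub-bundle and one in the quotient bundle). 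I would derive it from the relation $E^{(i)}(t)E^{(j)}(t)=E^{(i+j)}(t)$ among total Chern polynomials (with $E^{(m)}(t)=\prod(1+x_at)$ over the relevant block of variables), which converts the elementary symmetric functions of one block into the complete homogeneous symmetric functions of the other modulo the augmentation ideal of $R_{i+j}$; feeding this through the dual Jacobi--Trudi formula identifies $\pi'_{\bar{\beta^*}}$ with $(-1)^{ij-|\beta|}\pi_{\beta^*}$ modulo lower-degree terms with coefficients in $R_{i+j}^{>0}$, so that the leading contribution to $\langle\pi_\alpha,\pi_{\beta^*}\rangle$ is the classical complement-duality coefficient $c^{(j^i)}_{\alpha,\beta^*}=\delta_{\alpha\beta}$, and the remaining equivariant corrections must be shown to cancel --- the genuinely combinatorial core of the proof, which can alternatively be extracted from the dual Cauchy identity $\sum_{\lambda\subseteq(j^i)}s_\lambda(x)s_{\lambda'}(y)=\prod_{a,b}(1+x_ay_b)$ after unwinding $\bar{\beta^*}=(i^j)-\beta'$. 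The overall sign $(-1)^{ij}$ plays no role in the bimodule-map conclusion, since rescaling a balanced element by a nonzero constant keeps it balanced. A more pedestrian alternative would bypass the Frobenius formalism and verify balance directly, by Pieri-expanding both $(e_k\otimes1)\Delta_{ij}(1)$ and $(1\otimes e_k)\Delta_{ij}(1)$ --- after using $E^{(i)}(t)E^{(j)}(t)=E^{(i+j)}(t)$ to transport $e_k(x_1,\ldots,x_i)$ from one block of variables to the other --- into sums over added vertical and horizontal strips and matching the two expansions box by box; this is elementary but combinatorially heavy, so I would write up the Frobenius argument.
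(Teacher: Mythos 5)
The paper does not actually contain a proof of this lemma: immediately before the statement it says ``the proof of the following lemma will be part of a forthcoming paper,'' so there is no in-paper argument to compare against, and your proposal has to be judged on its own terms.

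Your treatment of $\mu_{ij}$ is fine, and your strategy for $\Delta_{ij}$ is the right one: reduce the bimodule-map property to the balancedness of $\Delta_{ij}(1)$ in $R_{ij}\otimes_{R_{i+j}}R_{ij}$, observe that the coevaluation $\sum_\alpha\pi_\alpha\otimes\pi_\alpha^{\vee}$ of any commutative Frobenius extension is automatically balanced (the argument you give for that is standard and correct), and identify $\Delta_{ij}(1)$ up to a nonzero scalar with this coevaluation --- exactly the mechanism the paper itself invokes for $(1,k)$ and $(2,2)$. The gap is the one you flag yourself: the dual-basis identity $\mathrm{tr}\bigl(\pi_\alpha\cdot(-1)^{|\beta|}\pi'_{\bar{\beta^*}}\bigr)=(-1)^{ij}\delta_{\alpha\beta}$ is stated as the reduction target but not actually proved. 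What you establish --- using $E^{(i)}(t)E^{(j)}(t)=E^{(i+j)}(t)$ and dual Jacobi--Trudi to write $\pi'_{\bar{\beta^*}}$ as $(-1)^{ij-|\beta|}\pi_{\beta^*}$ plus terms with coefficients in $R_{i+j}^{>0}$, then appealing to classical complement duality --- only pins down the degree-zero parts of the pairings, i.e.\ the entries with $|\alpha|=|\beta|$. The entries with $|\alpha|<|\beta|$ vanish for degree reasons, so this shows the pairing matrix has the form $(-1)^{ij}(\mathrm{Id}+N)$ with $N$ strictly block-lower-triangular with entries in $R_{i+j}^{>0}$; that gives invertibility (hence that $\{(-1)^{|\alpha|}\pi'_{\bar{\alpha^*}}\}$ is an $R_{i+j}$-basis), but not that it is \emph{the} dual basis. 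What you actually need is $N=0$ on the nose, i.e.\ $\mathrm{tr}(\pi_\alpha\pi'_{\bar{\beta^*}})=0$ whenever $|\alpha|>|\beta|$, and this exact equivariant vanishing is genuinely more than classical Schubert calculus (it amounts to the duality of the two opposite Schubert bases in $H^{*}_{U(i+j)}(G(i,i+j))$, the very ``equivariant corrections'' you acknowledge must be shown to cancel). Since neither that identity nor the alternative Pieri-rule balance check against $e_1(x_1,\ldots,x_i),\ldots,e_i(x_1,\ldots,x_i)$ is carried out, what you have is a correct and well-organized reduction, not a complete proof.
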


\noindent Note that the lemma above implies that 
$\Delta_{ij}$ is the coproduct in the commutative 
Frobenius extension $H^*_{U_{(i+j)}}(G(i,i+j))$ with respect 
to the trace defined by $\mbox{tr}(\pi_{j\ldots j})=1$.

The digon creation and annihilation maps are deduced from 
Lemma~\ref{lem:digon}.

Before we explain the complexes that we associate to 
the crossings, we call attention to the following 
isomorphisms. 

\begin{equation*}
\labellist
\pinlabel $\cong$ at 274 100
\tiny\hair 2pt
\pinlabel $a$     at  -8  11
\pinlabel $b$     at 136  12
\pinlabel $a+c+d$ at -51 204
\pinlabel $b-c-d$ at 181 204
\pinlabel $a+c$   at -27 120
\pinlabel $b-c$   at 157  85
\pinlabel $c$     at  65  33
\pinlabel $d$     at  65 176
\pinlabel $a$     at 413  10
\pinlabel $b$     at 558  11
\pinlabel $a+c+d$ at 368 204
\pinlabel $b-c-d$ at 603 204
\pinlabel $b-c$   at 580  85
\pinlabel $c+d$   at 462 184
\pinlabel $c$     at 490 102
\pinlabel $d$     at 520 162 
\endlabellist
\centering
\figs{0.25}{rectweb}\mspace{120mu}\figs{0.25}{recttri-r}
\end{equation*}

\begin{equation*}
\labellist
\pinlabel $\cong$ at 274 100
\tiny\hair 2pt
\pinlabel $a$     at  -8  11
\pinlabel $b$     at 136  12
\pinlabel $a-c-d$ at -51 204
\pinlabel $b+c+d$ at 181 204
\pinlabel $a-c$   at -27  85
\pinlabel $b+c$   at 157 120
\pinlabel $c$     at  65  33
\pinlabel $d$     at  65 174
\pinlabel $a$     at 413  10
\pinlabel $b$     at 558  11
\pinlabel $a-c-d$ at 368 204
\pinlabel $b+c+d$ at 603 204
\pinlabel $c+d$   at 464  30
\pinlabel $b+c$   at 580 120
\pinlabel $c$     at 520  48
\pinlabel $d$     at 508 146 
\endlabellist
\centering
\figs{0.25}{rectweb-rot}\mspace{120mu}\figs{0.25}{recttri-rd}
\end{equation*}

To each positive crossing such that 
$i\leq j$ we associate 
a complex of resolutions (see Figure~\ref{fig:genXingp}) 
$$
\widehat{\Gamma}_0\{i(i+1)\}{\xrightarrow{\ \ d_0^+\ \ }} \widehat{\Gamma}_1\{(i-1)i\}
{\xrightarrow{\ \ d_1^+\ \ }}\widehat{\Gamma}_2\{(i-2)(i-1)\}
{\xrightarrow{\ \ d_2^+\ \ }}\cdots 
{\xrightarrow{\ \ d_{i-1}^+\ \ }}\widehat{\Gamma}_i,
$$
where we put $\widehat{\Gamma}_i$ in the homological degree $0$.
Our candidates for the differentials are indicated in 
Figure~\ref{fig:bigcmplx}. Note that we have used the isomorphisms 
above. 
\begin{figure}[ht!]
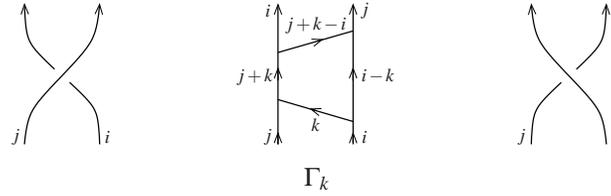

\labellist
\tiny\hair 2pt
\pinlabel $j$ at  -6  13
\pinlabel $i$ at 132  13
\pinlabel $j$     at 374  13
\pinlabel $i$     at 374 204
\pinlabel $i$     at 520  13
\pinlabel $j$     at 520 204
\pinlabel $j+k$   at 354 105
\pinlabel $i-k$   at 540 105
\pinlabel $k$     at 446  32
\pinlabel $j+k-i$ at 446 182
\pinlabel $j$     at 756  13
\pinlabel $i$     at 898  13
\small\hair 2pt
\pinlabel $\Gamma_k$ at  448 -50
\endlabellist
\centering
\figs{0.25}{Xingp}\hspace{12.4ex}
\figs{0.25}{sqweb-rot}\hspace{12.4ex}
\figs{0.25}{Xingn}
\\[1.2ex]
\caption{A positive crossing, the web $\Gamma_k$, and a negative crossing}
\label{fig:genXingp}
\end{figure}

\begin{figure}[h!]
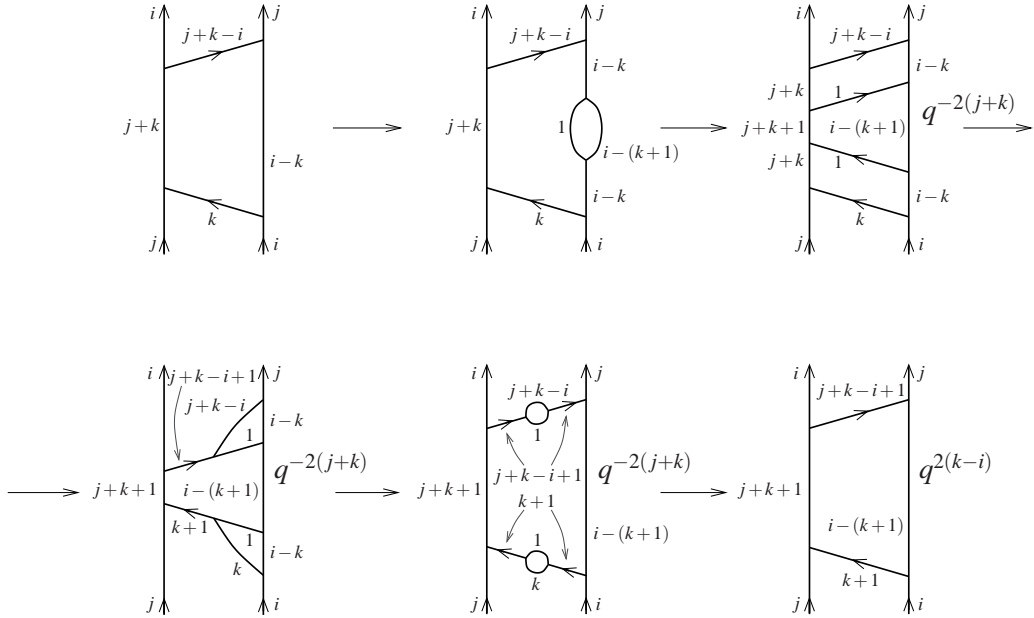

\labellist
\pinlabel $q^{-2(j+k)}$ at 1100 580
\pinlabel $q^{-2(j+k)}$ at 360 170
\pinlabel $q^{-2(j+k)}$ at 730 170
\pinlabel $q^{2(k-i)}$ at 1085 170
\tiny\hair 2pt
\pinlabel $j$     at 167 426
\pinlabel $i$     at 167 690
\pinlabel $i$     at 308 426
\pinlabel $j$     at 308 690
\pinlabel $j+k$   at 154 560
\pinlabel $i-k$   at 320 520
\pinlabel $k$     at 235 455
\pinlabel $j+k-i$ at 235 665
\pinlabel $j$       at 534 426
\pinlabel $i$       at 534 690
\pinlabel $i$       at 678 426
\pinlabel $j$       at 678 690
\pinlabel $j+k$     at 522 560
\pinlabel $i-k$     at 689 480
\pinlabel $i-k$     at 689 630
\pinlabel $k$       at 609 455
\pinlabel $j+k-i$   at 609 665
\pinlabel $1$       at 634 560
\pinlabel $i-(k+1)$ at 725 530
\pinlabel $j$       at  905 426
\pinlabel $i$       at  905 690
\pinlabel $i$       at 1045 426
\pinlabel $j$       at 1045 690
\pinlabel $j+k+1$   at  876 558
\pinlabel $j+k$     at  890 600
\pinlabel $j+k$     at  890 520
\pinlabel $i-k$     at 1058 480
\pinlabel $i-k$     at 1058 630
\pinlabel $k$       at  975 455
\pinlabel $j+k-i$   at  975 665
\pinlabel $i-(k+1)$ at  984 558
\pinlabel $1$       at  950 600
\pinlabel $1$       at  950 520
\pinlabel $j$       at 164  14
\pinlabel $i$       at 164 278
\pinlabel $i$       at 309  14
\pinlabel $j$       at 309 278
\pinlabel $j+k+1$   at 135 144
\pinlabel $j+k-i+1$ at 235 274
\pinlabel $i-(k+1)$ at 246 144
\pinlabel $i-k$     at 320  74
\pinlabel $i-k$     at 320 225
\pinlabel $k+1$     at 210 102
\pinlabel $j+k-i$   at 238 240
\pinlabel $k$       at 264  55
\pinlabel $1$       at 278 208
\pinlabel $1$       at 278  88
\pinlabel $j$       at 534  14
\pinlabel $i$       at 534 278
\pinlabel $i$       at 676  14
\pinlabel $j$       at 676 278
\pinlabel $j+k+1$   at 506 144
\pinlabel $i-(k+1)$ at 714  94
\pinlabel $k+1$     at 606 132
\pinlabel $k$       at 605  38
\pinlabel $j+k-i+1$ at 607 160
\pinlabel $j+k-i$   at 607 258
\pinlabel $1$       at 607 208
\pinlabel $1$       at 607  88
\pinlabel $j$       at  902  14
\pinlabel $i$       at  902 278
\pinlabel $i$       at 1044  14
\pinlabel $j$       at 1044 278
\pinlabel $j+k+1$   at  872 144
\pinlabel $i-(k+1)$ at  982 104
\pinlabel $k+1$     at  976  42
\pinlabel $j+k-i+1$ at  976 258
\endlabellist
\centering
\figs{0.34}{bigcplx}
\caption{The definition of the differentials}
\label{fig:bigcmplx}
\end{figure}

To negative crossings with $i\le j$, we associate the following complex of bimodules:
$$
\widehat{\Gamma}_i\{-2ij\}{\xrightarrow{\ \ d_0^+\ \ }}\widehat{\Gamma}_{i-1}\{-2ij\}
{\xrightarrow{\ \ d_1^+\ \ }}\widehat{\Gamma}_{i-2}\{2-2ij\}
{\xrightarrow{\ \ d_2^+\ \ }}\cdots 
{\xrightarrow{\ \ d_{i-1}^+\ \ }}\widehat{\Gamma}_0\{(i-1)i-2ij\},
$$
where again we put the resolution $\widehat{\Gamma}_i$ in the homological degree 0. The differentials are obtained by
inverting all arrows in the 
Figure~\ref{fig:bigcmplx}. \\

In the case when $i<j$, by looking at the picture from the ``other side of the paper" (i.e. by rotation around the $y$-axis), 
we obtain the crossing of the above form.\\

The following lemma is hard to prove directly, as one has 
to check the statement for a lot of generators. However in 
a subsequent paper we will prove this conjecture using a 
different technique.  
\begin{conj} $d_{k+1}^{\pm}d_{k}^{\pm}=0$, for all 
$0\leq k\leq i-2$. 
\end{conj}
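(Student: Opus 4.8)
The plan is to reduce the positive case to the negative one and then to prove $d_{k+1}^{-}d_k^{-}=0$, either by a direct computation in the polynomial-ring model of Section~\ref{sec:Markov} or, more promisingly, by exhibiting the whole complex as a summand of a cabled HOMFLY-PT complex all of whose strands are labelled $1$.

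For the reduction I would argue exactly as for $d_2^{\pm}d_1^{\pm}=0$ in Section~\ref{sec:HOMFLY-PT}. Each $d_k^{+}$ is, by the recipe of Figure~\ref{fig:bigcmplx}, a composite of digon creations and annihilations (Lemma~\ref{lem:digon}), edge merges $\mu_{ab}$ and edge splits $\Delta_{ab}$ (Definition~\ref{defn:ij-mucomu}), transported through the rectangle isomorphisms noted just before Figure~\ref{fig:bigcmplx}; reversing all arrows, which is how the negative differentials are defined, turns $d_{i-k-1}^{+}$ into $d_k^{-}$ and replaces each unit of the biadjunction between induction and restriction by the corresponding counit, and vice versa. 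As in the case $i=j=2$, this makes the system of relations $d_{k+1}^{-}d_k^{-}=0$ (for $0\le k\le i-2$) equivalent to the system $d_{k+1}^{+}d_k^{+}=0$, so it suffices to treat the negative differentials, which are built from simpler pieces --- mostly inclusions and merges, with a single projection --- and send simple tensors to simple tensors up to that projection.

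To prove $d_{k+1}^{-}d_k^{-}=0$ I would pass to the description of $\widehat{\Gamma}$ as a quotient polynomial ring $R_{\Gamma}$ from Section~\ref{sec:Markov}. There the digon inclusions and the merges become surjections of quotient rings, the splits $\Delta_{ab}$ become multiplication by the explicit coproduct polynomial of Definition~\ref{defn:ij-mucomu} followed by the quotient map, and the digon annihilations become projections onto prescribed direct summands, just as in the decompositions of $R_{1\cdots1}$ into shifted copies of smaller rings used throughout Section~\ref{sec:triply}. Composing two consecutive differentials, $d_{k+1}^{-}d_k^{-}$ becomes multiplication by a product $P_kP_{k+1}$ of two coproduct polynomials, read inside the target ring enlarged by the kernels of the intervening projections; the assertion $d_{k+1}^{-}d_k^{-}=0$ then becomes the polynomial identity that $P_kP_{k+1}$ lies in the relevant ideal. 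Unwound, this is an identity about products of Schur polynomials in two disjoint sets of variables modulo the relations of a Grassmannian cohomology ring $H^{*}_{U(i+j)}(G(k,i+j))$, and I would try to deduce it from the Pieri rule there together with the Frobenius compatibility $\Delta_{ab}\circ\mu_{ab}=(\mu_{ab}\otimes\id)\circ(\id\otimes\Delta_{ab})=(\id\otimes\mu_{ab})\circ(\Delta_{ab}\otimes\id)$ of these extensions and the transitivity of induction and restriction along the tower of symmetric-polynomial subrings, so as never to expand the alternating sum over partitions in the definition of $\Delta_{ab}$.

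The main obstacle is making that last step uniform in $i$, $j$ and $k$: the rectangle isomorphisms have to be tracked so carefully that the Frobenius relations apply termwise, and even the statement that $\mu_{ab}$ and $\Delta_{ab}$ are bimodule maps is, for general $a,b$, only asserted here. A cleaner route --- presumably the ``different technique'' alluded to in the text --- is to cable: split the $i$- and $j$-labelled strands into $i$ and $j$ strands labelled $1$ by inserting digon webs $\digonup_{1\cdots1}$ at the top and bottom and sliding them past the crossing, as in the proof of Lemma~\ref{lem:slide22}, so that the resulting complex becomes (a summand of) the HOMFLY-PT complex of a braid on $i+j$ strands all labelled $1$, for which $d^{2}=0$ holds by construction~\cite{Kh}. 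One then \emph{defines} the $(i,j)$-crossing complex as that summand, whence $d_{k+1}^{\pm}d_k^{\pm}=0$ is automatic, and is left with the genuinely delicate task --- the one deferred to the sequel --- of checking that the differential induced on the summand coincides with the candidate of Figure~\ref{fig:bigcmplx}.
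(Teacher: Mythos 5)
The paper itself offers no proof of this statement: it is stated as a conjecture, and the surrounding text says it ``is hard to prove directly, as one has to check the statement for a lot of generators,'' deferring a proof to ``a subsequent paper \ldots{} using a different technique.'' There is therefore no argument in the paper to compare yours against, and you should treat the claim as genuinely open.

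On its own terms your sketch is reasonable but does not close the conjecture, as you yourself acknowledge. The positive-to-negative transposition via units and counits of the induction/restriction biadjunction is sound and mirrors the paper's argument for $d_2^{\pm}d_1^{\pm}=0$ in the $(2,2)$ case, and your direct polynomial-ring calculation is essentially the route the authors call hard. The cabling idea is an attractive guess at the deferred ``different technique,'' but it has a concrete gap that you name without filling. The sliding lemmas of Section~\ref{sec:invariance} yield only homotopy equivalences, and the paper itself cautions that this method ``does not give a specific homotopy equivalence between the complexes before and after a Reidemeister move. It only shows that the corresponding Poincar\'{e} polynomials are equal.'' An equality of Poincar\'{e} polynomials neither exhibits the candidate complex of Figure~\ref{fig:bigcmplx} as an honest direct summand of the fully $1$-labelled cabled complex nor identifies its induced differential with the maps drawn there, so it does not by itself show that those specific maps compose to zero; it only shows that \emph{some} homotopy-equivalent complex with vanishing $d^{2}$ exists. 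That identification is exactly the ``genuinely delicate task'' you defer, and until it is carried out the conjecture remains open.
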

The other tricks remain largely the same. The crucial conjecture to prove is 
\begin{conj}
We have the homotopy equivalence
\begin{equation*}
\label{eq:slidei1}
\labellist
\pinlabel $\cong$ at 264 110
\tiny\hair 2pt
\pinlabel $j$ at -12 14
\pinlabel $1$ at -10 177
\pinlabel $i$ at 73 204
\pinlabel $i+1$ at 215 14
\pinlabel $j$ at 335 14
\pinlabel $1$ at 338 187
\pinlabel $i$ at 437 214
\pinlabel $i+1$ at 575 14
\endlabellist
\raisebox{-16pt}{\figs{0.2}{slide1}\qquad\quad \figs{0.2}{slide2}}
\end{equation*}
\end{conj}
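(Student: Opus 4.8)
The plan is to imitate, now at the level of complexes, the proof of Lemma~\ref{lem:slide22}, replacing the three explicit square decompositions used there by their arbitrary-label analogues and replacing Lemma~\ref{lem:aux1} by a general iterated Gaussian-elimination statement. Assume $i+1\le j$; the case $i+1>j$ follows by rotation around the $y$-axis, as in the body of the paper. Granting the preceding conjecture $d^{\pm}_{k+1}d^{\pm}_k=0$, which makes the relevant complexes into complexes, glue the digon $\digonup_{1i}$ to the lower $(i+1)$-strand of the positive $(i+1,j)$-crossing and use the complex of Figures~\ref{fig:genXingp}--\ref{fig:bigcmplx}: the right-hand side of the asserted equivalence then carries a complex $\cC$ whose $k$-th term is the bimodule of $\Gamma_k$ for the $(i+1,j)$-crossing with $\hatdigonup_{1i}$ tensored on below. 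Sliding the digon past the crossing turns the left-hand side into the complex $\cC'$ of the $(i,j)$-crossing pushed through a digon, which has exactly one term fewer. So the claim is $\cC\simeq\cC'$.

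Second, I would apply the arbitrary-label square decompositions termwise to $\cC$. For each $k$ one needs to split the bimodule of ``$\Gamma_k$ with a $\digonup_{1i}$ glued below'' as a direct sum of the $k$-th term of $\cC'$ and a number of extra summands, in such a way that the extra summands of consecutive terms are matched up in pairs. Here one uses the general forms of Lemmas~\ref{lem:1112-square}, \ref{lem:1122-square} and~\ref{lem:2113-square} --- that is, the decomposition of a $\hatsquareup$-bimodule into a dumbell summand plus shifted copies of the two-strand web --- together with Lemma~\ref{lem:digon}, which already holds for all $i,j$. These general square decompositions are not established in this paper; they belong to the forthcoming paper referred to above, and for the present conjecture one may take them as given.

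Third, I would formulate and apply a general ``slide lemma'': the abstract statement that a complex each of whose terms splits as $X_k\oplus Y_k\oplus Z_k$, with the components $Y_k\to Y_{k+1}$ and $Z_{k-1}\to Z_k$ of the differential invertible and the remaining cross-terms suitably zero, is homotopy equivalent to the complex with terms $X_k$ and the induced differentials. Lemmas~\ref{lem:aux1} and~\ref{lem:aux2} are the $i=1$ instances of exactly this, assembled by hand; in general it is just iterated Gaussian elimination. Its input is a computation of the relevant components of the differentials of $\cC$ in the decomposed picture --- that they are, up to sign or a nonzero scalar, identity maps between the matched summands and vanish on the surviving summands where they must. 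As in Lemma~\ref{lem:slide22}, items (1)--(7), this reduces to evaluating the composites $d_k^{\pm}$ on a small generating set of the relevant free $R_{1i}$-modules.

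The hard part will be this last step in full generality. For small $i$ the square decompositions have few summands and the differential components can be checked on one or two generators, but for arbitrary $i,j$ both the number of summands and the number of generators to test grow, and several such decompositions, glued along the differential, must be handled simultaneously, so the brute-force verification quickly becomes unmanageable. This is precisely why the authors announce a ``different technique'' for the general case: the natural remedy is to reformulate the whole construction in a more structured category --- via the action of the categorified quantum group, or via Webster's diagrammatic algebras --- in which the digon-slide move is forced by functoriality together with already known relations, as in the $sl(N)$ story, so that no direct manipulation of bimodule generators is needed. With such a reformulation available the conjecture should follow by the formal Gaussian-elimination argument above, the invertibility of the differential components now coming from structural input rather than from computation.
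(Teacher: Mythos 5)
This statement is labelled a conjecture in the paper and the authors explicitly decline to prove it: they remark that the preceding conjecture ($d^{\pm}_{k+1}d^{\pm}_k=0$) is ``hard to prove directly, as one has to check the statement for a lot of generators,'' announce that a proof ``using a different technique'' will appear in a subsequent paper, and then simply state the slide conjecture as ``the crucial conjecture to prove,'' with the comment that ``the other tricks remain largely the same.'' So there is no proof in the paper to compare against.

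Your sketch matches the paper's implied strategy: generalize Lemma~\ref{lem:slide22} by replacing the three explicit square decompositions (Lemmas~\ref{lem:1112-square}, \ref{lem:1122-square}, \ref{lem:2113-square}) with arbitrary-label analogues, use the general digon decomposition of Lemma~\ref{lem:digon}, and replace Lemmas~\ref{lem:aux1}--\ref{lem:aux2} with an iterated Gaussian-elimination statement. You also correctly identify the two genuine gaps — the general square decompositions are not proved here, and verifying the required components of the differentials on generators blows up combinatorially — which is exactly the obstruction the authors cite. Your speculation about the remedy (a more structured setting in which the slide move follows from functoriality rather than generator-by-generator computation) is consistent with the paper's mention of a ``different technique,'' though the paper gives no specifics. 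In short, the proposal is a faithful blueprint of the approach the authors intend, honestly flagged as incomplete at the same points the authors flag it; it is not a proof, and neither is the paper's treatment.
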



\vspace*{1cm}

\noindent {\bf Acknowledgements} 
The authors thank Mikhail Khovanov and Catharina Stroppel for helpful conversations on the topic of this 
paper. 

The authors were supported by the 
Funda\c {c}\~{a}o para a Ci\^{e}ncia e a Tecnologia (ISR/IST plurianual funding) through the
programme ``Programa Operacional Ci\^{e}ncia, Tecnologia, Inova\-\c
{c}\~{a}o'' (POCTI) and the POS Conhecimento programme, cofinanced by the European Community 
fund FEDER. Marko Stosic is also partially supported by the Ministry of Science of Serbia, project 144032.


\end{document}